\documentclass[reqno,a4paper]{amsart}
\usepackage[english,activeacute]{babel}
\usepackage{amssymb,amsmath,amsthm,amsfonts,mathrsfs,hyperref,mathtools,stmaryrd, dsfont}
\DeclareFontFamily{U}{mathc}{}
\DeclareFontShape{U}{mathc}{m}{it}%
{<->s*[1.03] mathc10}{}

\usepackage{authblk}

\DeclareMathAlphabet{\mathscr}{U}{mathc}{m}{it}
\usepackage[font=footnotesize]{caption}
\usepackage{tikz}
\usetikzlibrary{intersections,shapes,trees,arrows,spy,positioning,decorations,arrows.meta,decorations.pathreplacing,patterns,calc}
\usepackage{tikz-cd}
\usepackage{tkz-euclide}
\usepackage{pgfplots}
\usepackage{xcolor}
\usepackage[numbers,square,sort]{natbib}
\usepackage{tabularx}
\usepackage{enumitem}  
\usepackage[T1]{fontenc}
\usepackage[foot]{amsaddr}
\usepackage{url}  

\textwidth 160mm
\textheight 240mm
\topmargin       -10mm
\evensidemargin  -1mm
\oddsidemargin   -1mm
\parindent 0pt
\setlength{\captionmargin}{12pt}
\theoremstyle{plain}
\newtheorem{theorem}{Theorem}[section]

\newtheorem{lemma}[theorem]{Lemma}
\newtheorem{proposition}[theorem]{Proposition}
\newtheorem{corollary}[theorem]{Corollary}
\theoremstyle{definition}

\newtheorem{definition}[theorem]{Definition}

\theoremstyle{remark}
\newtheorem{remark}[theorem]{Remark}

\numberwithin{equation}{section}





\DeclareMathOperator{\poi}{Poi}
\DeclareMathOperator{\expo}{Exp}

\newcommand{\defeq}{\mathrel{\mathop:}=}

\usepackage{tikzscale}

\definecolor{darkgreen}{rgb}{.125,.5,.25}
\definecolor{darklavender}{rgb}{0.5, 0, 0.5}
\definecolor{navyblue}{rgb}{0.0,0.14,0.4}

\newcommand{\dg}[1]{\textcolor{darkgreen}{#1}}

\setcounter{tocdepth}{1}

\date{\today}%

\begin{document}
	
	\title{Critical multitype branching: a functional limit theorem and ancestral processes}
	
	\author{Ellen Baake$^{(1)}$}
	\address[1]{Faculty of Technology, Bielefeld University, Box 100131, 33501 Bielefeld, Germany}
	\email{ebaake@techfak.uni-bielefeld.de}
	
	\author{Fernando Cordero$^{(2)}$}
	\address[2]{Department of Natural Sciences and Sustainable Ressources, Institute of Mathematics, BOKU University, Greger-Mendel-Strasse 33/II, 1180 Vienna, Austria}
	\email{fernando.cordero@boku.ac.at}
	
	\author{Sophia-Marie Mellis$^{(1)}$}
	\email{smellis@techfak.uni-bielefeld.de}
	
	\author{Vitali Wachtel$^{(3)}$}
	\address[3]{Faculty of Mathematics, Bielefeld University, Box 100131, 33501 Bielefeld, Germany}
	\email{wachtel@math.uni-bielefeld.de}

	\begin{abstract}
		We  consider the long-term behaviour of critical multitype branching processes conditioned on non-extinction, both with respect to the forward and the ancestral processes. Forward in time, we  prove a functional limit theorem in the space of trajectories of the linearly-scaled $h$-transformed process; the change of measure allows us to work on the same probability space for all times. Backward in time, we trace the lines of descent of individuals sampled from a (stationary) population and analyse the ancestral type distribution, that is, the type distribution of the ancestors in the distant past, as well as the type process along the ancestral line.
	\end{abstract}
	
	\maketitle

    \noindent{\slshape\bfseries MSC 2010.} Primary: 60J80; Secondary: 60F10 \\
    
    \noindent{\slshape\bfseries Keywords. }{Multitype branching process; type history; ancestral distribution; Doob's h-transform; size-biased tree; large deviations}
	
	
	\section{Introduction}
	
	Multitype branching processes are widely used as prototype models for a variety of phenomena, such as the spread of infectious diseases in  populations with heterogeneous susceptibilities, 
	cell-kinetic studies with multiple cell types, or the development of complex networks in social systems, see, for example, \cite{Keating2022, Yakovlev2009, Sarabjeet2014,  Spencer_ONeill_2011, Penisson_2014}.
	Classical results concern the forward time evolution in the long term, such as extinction probabilities, asymptotic growth rates, and stationary type distributions conditional on nonextinction \cite{mullikin1963limiting,joffe1967multitype,athreya2004branching}. In the supercritical case, this has been complemented by the backward point of view, which accounts for the evolutionary history of  individuals  \cite{jagers1989,jagers1992stabilities,jagers1996asymptotic,georgii2003supercritical}.  Here, one traces back the  line of descent of individuals sampled from a (stationary) population and analyses the \emph{ancestral type distribution}, that is, the type distribution of the ancestors in the distant past, as well as the type process along the ancestral line.  The resulting connections between the present and the past have led to important insight in population biology and the theory of biological evolution, see \cite{SughiyamaKobayashi2017,Smerlak2021,CheekJohnston2023} for some recent examples. \\
	
	In the supercritical case, the long-term behaviour both forward and backward in time is solely determined by the Perron--Frobenius eigenvalue and eigenvectors of the first-moment matrix of the process, see \cite[Thm.~V.6.1]{athreya2004branching} and \cite{georgii2003supercritical}. Characteristically, one has almost-sure convergence to the stationary type distributions (conditional on nonextinction). This concentration result is related to the dichotomy of either extinction or exponential growth, which gives access to the strong law of large numbers immediately. \\

	The purpose of this paper is to extend this analysis to the critical case. 
	Typically, critical branching processes conditioned on non-extinction (either by `simple' conditioning on survival up to time $n$ or through Doob's $h$-transform) exhibit linear growth over time. 
	It is well known that the linearly-scaled process converges weakly to a \emph{non-degenerate} limiting distribution, see  \cite{joffe1967multitype,mullikin1963limiting} or \cite[Thm.~V.5.1]{athreya2004branching}. This is once more related to the left Perron--Frobenius eigenvector of the first moment matrix, but there is no concentration as in the supercritical case.  \\
	
	In this contribution, we will characterise the  limiting forward-time \emph{process} by means of a Bessel process, and investigate the ancestral type distribution and the type process along the ancestral line. Surprisingly, it will turn out that, despite the very different behaviour forward in time, the long-term behaviour of the ancestral process is similar to that in the supercritical case; in particular, it still follows the law of large numbers, but this time the weak law as opposed to the strong one. 
	\\   
	
	The paper is organised as follows. After setting up the family tree construction and the change of measure that will be our workhorse (Section~\ref{prelim}), we prove the announced functional limit theorem in the space of trajectories of the $h$-transformed process (Section~\ref{seclimittheorem}). The change of measure allows us to work on the same probability space for all times. On the way, we obtain explicit expressions for the entrance law and the limiting transition probabilities, both under the original and the transformed measures; this will already demonstrate that  the transformed measure is linked to size-biasing of certain random variables. Next, we establish 
	laws of large numbers for genealogical quantities. Specifically, as in \cite{georgii2003supercritical}, we consider individuals alive at some time $n$ and examine the types of their ancestors at an earlier time $n - m$. As both $n$ and $m$ tend to infinity, we show that  the population average of the ancestral types converges to a distribution that is related to the left and right Perron--Frobenius eigenvectors of the first-moment matrix in the same way as in the supercritical case (Section~\ref{secpopulationaverage}). A similar result holds for the corresponding time average along the ancestral line (Section~\ref{sectimeaverage}). The latter result relies on the size-biased family tree of \cite{kurtz1997conceptual}, which we construct in Section~\ref{secsizebiasing}, where the connection with the transformed measure will also become clear. Finally, in Section~\ref{theolargedevtypicalancestral}, we establish the law of the type process along a typical ancestral line.

	\section{Preliminaries}\label{prelim}
	\subsection{The process and the family tree}\label{secfamilytree}
	We adopt most of the notation in \cite{georgii2003supercritical}: Let $[d]= \{1, \dots, d\}$ be a finite set of types. An individual of type $i \in [d]$ lives for one unit of time, which we refer to as \emph{generation}, and then splits into a random offspring $\tilde{\boldsymbol{N}}_{i} = (\tilde{N}_{i,j})_{j \in [d]}$  (we will get rid of the tilde soon) with distribution $\boldsymbol{p}_{i}$ on $\mathbb{N}_{0}^{d}$ and finite means $m_{i,j}\coloneq \mathbb{E}[\tilde{N}_{i,j}]>0$ for all $i,j \in [d]$. Here, $\tilde{N}_{i,j}$ is the number of $j$-children, and $\mathbb{N}_{0}= \{ 0, 1, 2, \dots\}$. \\
	
	We assume that the mean offspring matrix $\mathbf{M}= (m_{i,j})_{i,j \in [d]}$ is irreducible. Perron-Frobenius (PF) theory therefore tells us that $\mathbf{M}$ has a principal eigenvalue $\lambda$ associated with positive left and right eigenvectors  $\boldsymbol{v}$ and $\boldsymbol{u}$, which will be normalised such that 
	\begin{equation}\label{our_normalisation}
	   \langle \boldsymbol{1}, \boldsymbol{v} \rangle = 1 = \langle \boldsymbol{u}, \boldsymbol{v} \rangle,
	\end{equation}
	where $\boldsymbol{1} \defeq ({1, \ldots, 1})$. We will throughout consider the critical case $\lambda =1$.
	
	\begin{remark}\label{remnormJS}
	Many classical references, such as \cite{joffe1967multitype}, \cite{mullikin1963limiting}, and \cite[Chap.~5]{athreya2004branching} use left and right Perron--Frobenius eigenvectors $ \tilde{\boldsymbol{v}}$ and $\tilde{\boldsymbol{u}}$ normalised so that $\langle \boldsymbol{1}, \tilde{\boldsymbol{u}} \rangle = 1 = \langle \tilde{\boldsymbol{u}}, \tilde{\boldsymbol{v}} \rangle$, in line with a standard formulation of Perron--Frobenius theory (see, for example, \cite[App.~2,~Thm.~2.3]{karlintaylor1975}). That is, they are related to ours via
		\[
		   \boldsymbol{v} = \frac{\tilde{\boldsymbol{v}}}{\langle \boldsymbol{1}, \tilde{\boldsymbol{v}} \rangle}  \quad \text{and} \quad \tilde{\boldsymbol{u}} = \frac{\boldsymbol{u}}{\langle \boldsymbol{1}, \boldsymbol{u} \rangle},
		\]
		and, due to \eqref{our_normalisation}, $\langle \tilde{\boldsymbol{u}}, \tilde{\boldsymbol{v}} \rangle =1$ implies
		\begin{equation}  
		    \langle \boldsymbol{1}, \tilde{\boldsymbol{v}} \rangle = \langle \boldsymbol{1}, \boldsymbol{u} \rangle. \label{eqnromJS}
		\end{equation}

		We use our normalisation since it makes both $\boldsymbol{v}$ and $\boldsymbol{\alpha}\defeq (u_{i}v_{i})_{i \in [d]}$, the stationary type distributions forward and backward in time, respectively, into properly-normalised probability distributions.
	\end{remark} 
	
	According to \cite{harris1963theory}, the associated random family tree can be constructed as follows. Let $\mathbb{X} = \bigcup_{n \in \mathbb{N}_{0}} \mathbb{X}_{n}$, where $\mathbb{X}_{n}$ describes the potential individuals of the $n$'th generation. That is, $\mathbb{X}_{0}=[d]$ and $i_{0} \in \mathbb{X}_{0}$ specifies the type of the root, i.e. the founding ancestor. Next, $\mathbb{X}_{1}= [d] \times \mathbb{N}$, and the element $x=(i_{1}, \ell_{1})$ is the $\ell_{1}$'th $i_{1}$-child of the root. Finally, for $n>1$, $\mathbb{X}_{n}= ([d] \times \mathbb{N})^{n}$, and $x=(i_{1}, \ell_{1}; \ldots; i_{n}, \ell_{n})$, $x$ is the $\ell_{n}$'th $i_{n}$-child of its parent $\tilde{x} = (i_{1}, \ell_{1}; \ldots; i_{n-1}, \ell_{n-1})$. We write $\sigma(x)$ for the type of $x \in \mathbb{X}_{n}$, that is, if $x=(i_{1}, \ell_{1}; \ldots; i_{n}, \ell_{n})$ then $\sigma(x)=i_{n}$. \\
	
	With each $x \in \mathbb{X}$ we associate its random offspring $\boldsymbol{N}_{x} = (N_{x,j})_{j \in [d]}\in \mathbb{N}_{0}^{d}$ with distribution $\boldsymbol{p}_{\sigma(x)}$, where we assume the family $\{\boldsymbol{N}_{x} : x \in \mathbb{X}   \}$ to be independent. The random variables $\boldsymbol{N}_{x}$ indicate which of the virtual individuals $x \in \mathbb{X}$ are actually realised, namely those in the random set $X= \bigcup_{n \in \mathbb{N}_{0}}X_{n}$ defined recursively by \[ X_{0}= \{i_{0}\}, \quad X_{n}= \{ x=(\tilde{x}; i_{n}, \ell_{n}) \in \mathbb{X} : \tilde{x} \in X_{n-1}, \ell_{n}\leq N_{\tilde{x}, i_{n}}   \}, \quad \text{for }n\geq 1,         \] where $i_{0}$ is the prescribed type of the root. \\
	
	The family tree is completely determined by the process $(X_{n})_{n \in \mathbb{N}_{0}}$. For $0<m<n$ and $x \in X_{n}$ we write $x(m)$ for the unique ancestor of $x$ living in generation $m$, that is, the unique $x(m) \in X_{m}$ such that $x=x(m)y$ for some $y \in ([d \times \mathbb{N})^{n-m}$. The concatenation $xy$ of two elements $x,y \in \mathbb{X}$ is defined in the natural way. On the other hand, for $x \in X_{m}$ we let \begin{equation} X(x, n) = \{ y \in \mathbb{X}_{n-m}: xy \in X_{n}  \}  \label{defsetdescendants}
	\end{equation} denote the set of descendants of $x$ living in generation $n$ (see Figure~\ref{figbranchingprocess}). The descendant trees $(X(x, n))_{n \geq m}$ with $x \in X_{m}$ are conditionally independent given $(X_{k})_{k \in [m]}$, with distribution $\mathbb{P}_{\sigma(x)}$, where $\mathbb{P}_{i}$ denotes the distribution of $(X_{n})_{n \in \mathbb{N}_{0}}$ on $\Omega$ when the type of the root is $i_{0}=i$. \\
	
	We will be interested in  \[ \boldsymbol{Z}(n) \coloneq (Z_{i}(n))_{i \in [d]}, \quad \text{where} \quad Z_{i}(n)= |\{ x \in X_{n} : \sigma(x)=i  \}|  \] denotes the number of type-$i$ individuals alive in generation $n$. In particular, the process $(\boldsymbol{Z}(n))_{n \in \mathbb{N}_{0}}$ is a multitype Galton-Watson branching process, which we may also describe recursively via \begin{equation} Z_{i}(n+1) = \sum_{\ell=1}^{d}  \sum_{j=1}^{Z_{\ell}(n)}  \xi_{\ell,i}^{(j)}(n), \quad i\in [d] \label{eqdefgaltonwatson}
	\end{equation} where $ \xi_{\ell,i}^{(j)}(n)$ denotes the number of type-$i$ offspring of the $j$-th type $\ell$ individual in generation $n$ and the $( \xi_{\ell,i}^{(j)}(n))_{j, n \in \mathbb{N}_{0}}$ are i.i.d. From now on, we will often consider collections of family trees and write $\mathbb{P}_{\boldsymbol{z}}$ for the distribution of $(\boldsymbol{Z}(n))_{n \in \mathbb{N}_{0}}$, started from $\boldsymbol{Z}(0)=\boldsymbol{z}$.
	
	\begin{figure}[h]
		\includegraphics[width=0.8\textwidth]{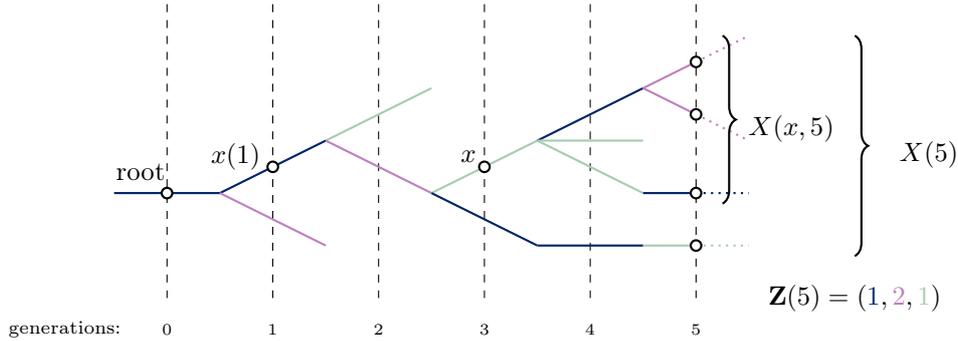}
		\caption{A family tree for $d=3$. Types are indicated by different colours, indexed in the order (\textcolor{navyblue}{blue}, \textcolor{darklavender!70}{magenta}, \textcolor{darkgreen!60}{green}), counted from top to bottom. We have distinguished the 3rd-generation individual $x=(\textcolor{navyblue}{\text{blue}}, 1; \textcolor{darklavender!70}{\text{ magenta}}, 1; \textcolor{darkgreen!60}{\text{ green}}, 1)$ (that is, the circled edge intersecting the vertical dashed line at $3$). Its unique ancestor in generation 1 is $x(1)=(\textcolor{navyblue}{\text{blue}}, 1)$; $X(x,5)$ (that is, the set of all edges that emanate from $x$ and hit the vertical dashed line at $5$) is its set of descendants in generation 5, and $\boldsymbol{Z}(5)$ counts the type frequencies in the population $X(5)$ (that is, all edges that intersect the vertical dashed line at $5$).}
		\label{figbranchingprocess}
	\end{figure}
	In particular, we may now also write the mean offspring matrix $\mathbf{M}$ in terms of $(\boldsymbol{Z}(n))_{n \in \mathbb{N}_{0}}$: \[ m_{i,j} = \mathbb{E}_{\boldsymbol{e}_{i}}[Z_{j}(1) ], \quad i,j \in [d].   \]
	
	Since we consider only the critical case, we have $\mathbb{P}(\Omega_{\mathrm{surv}}(n)) \underset{n \to \infty}{\longrightarrow} 0$, where we write \[  \Omega_{\mathrm{surv}}(n) \coloneq \{ X_{n} \neq \varnothing \}   \] for the event that the population survives until time $n$. Conditioned on this event, it is well known that the population grows linearly with $n$ (at least asymptotically). More specifically, according to \cite[Thm.~6]{joffe1967multitype} and \cite[Thm.~8]{mullikin1963limiting}, we have, under $\mathbb{P}_{\boldsymbol{z}}$,  \begin{equation} \frac{\boldsymbol{Z}(n)}{n} \ \Big{\vert} \ \Omega_{\mathrm{surv}}(n) \Rightarrow W \boldsymbol{v} \quad \text{as } \, n \to \infty, 
	\label{resjoffespitzer}
	\end{equation} where  $\Rightarrow$ denotes weak convergence and $W \sim \expo(1/Q[\boldsymbol{u}])$ is an exponential random variable with parameter $1/Q[\boldsymbol{u}]$. Here, $Q[\cdot]$ denotes the quadratic form defined by $Q[\boldsymbol{s}] \coloneq \langle \boldsymbol{v}, \boldsymbol{q}[\boldsymbol{s}] \rangle$, where \[ q_{i}[\boldsymbol{s}] =	\frac{1}{2} \sum_{k=1}^{d} \sum_{j=1}^{d} s_{j} s_{k} \mathbb{E}_{\boldsymbol{e}_{i}} [Z_{k}(1)Z_{j}(1) - \delta_{j,k} Z_{k}(1)] < \infty    \] for $\boldsymbol{s}= (s_{i})_{i \in [d]}$ and $s_{i} \in [0,1]$. 
	
	\begin{remark}\label{remsqrtd}
		Actually, in \cite[Thm.~6]{joffe1967multitype} and \cite[Thm.~8]{mullikin1963limiting} (but not in \cite[Thm.~V.5.1]{athreya2004branching}), the parameter of the exponential distribution includes an additional factor $\sqrt{d}$. However, examining the proofs makes clear that this is in fact a misprint.
	\end{remark}
	
	\begin{remark}\label{remQ}
		Note that our definition of the quadratic form $Q[\cdot]$ differs from that in \cite{joffe1967multitype,mullikin1963limiting} and \cite[Chap.~V.2]{athreya2004branching}, although it looks very similar: due to their normalisation (cf.\ Remark~\ref{remnormJS}), their choice of $Q[\cdot]$, here denoted by $\widetilde{Q}[\cdot]$, translates to our setting as 
		\begin{equation}\label{Qtilde} \widetilde{Q}[\boldsymbol{s}]  \coloneq \langle  \tilde{\boldsymbol{v}}, \boldsymbol{q}[\boldsymbol{s}] \rangle = \langle \boldsymbol{1}, \tilde{\boldsymbol{v}} \rangle  Q[\boldsymbol{s}].    
		\end{equation}
		The other way around, they write \eqref{resjoffespitzer} as, under $\mathbb{P}_{\boldsymbol{z}}$, \[ \frac{\boldsymbol{Z}(n)}{n} \ \Big{\vert} \ \Omega_{\mathrm{surv}}(n) \Rightarrow \widetilde{W} \tilde{\boldsymbol{v}} \quad \text{as } \, n \to \infty,   
		\] with $\widetilde{W} \sim \expo\big(\sqrt{d}/\widetilde{Q}[\tilde{\boldsymbol{u}}]\big)$, so the parameter of the exponential distribution, up to $\sqrt{d}$ (see Remark~\ref{remsqrtd}), translates from their notation to ours as \[ \frac{1}{\langle \boldsymbol{1}, \tilde{\boldsymbol{v}} \rangle \widetilde{Q}[\tilde{\boldsymbol{u}}]} = \frac{1}{\langle \boldsymbol{1}, \tilde{\boldsymbol{v}} \rangle^{2} Q[\frac{\boldsymbol{u}}{\langle \boldsymbol{1}, \boldsymbol{u} \rangle}]} = \frac{1}{Q[\boldsymbol{u}]},  \] 
		where the first step is due to \eqref{Qtilde}. Therefore, in the light of Remarks~\ref{remnormJS} and \ref{remsqrtd}, $\widetilde{W} \tilde{\boldsymbol{v}} \overset{d}{=} W \boldsymbol{v}$, so that \eqref{resjoffespitzer} corresponds to the results of \cite{joffe1967multitype} and \cite{mullikin1963limiting}. 
	    In particular, according to \cite[(3.14)]{joffe1967multitype}: \begin{equation}\label{eqlimitOmegasurvn}
				n\mathbb{P}_{\boldsymbol{z}} (\Omega_{\mathrm{surv}}(n)) \underset{n \to \infty}{\longrightarrow} \frac{\langle \tilde{\boldsymbol{u}}, \boldsymbol{z} \rangle}{\widetilde{Q}[\tilde{\boldsymbol{u}}]} = \frac{\langle \boldsymbol{u}, \boldsymbol{z} \rangle}{\langle \boldsymbol{1}, \tilde{\boldsymbol{v}} \rangle Q\big[\frac{\boldsymbol{u}}{\langle \boldsymbol{1}, \boldsymbol{u} \rangle }\big] \langle \boldsymbol{1}, \boldsymbol{u} \rangle} = \frac{\langle \boldsymbol{u}, \boldsymbol{z}\rangle}{Q[\boldsymbol{u}]}. 
		\end{equation} 
	\end{remark}
	
	
	\subsection{The measure  $\widehat{\mathbb{P}}$}\label{secP}
	We now introduce the change of measure that will be a key ingredient to our results.
	
	\begin{definition}\label{defhatP}
		We define the probability measure $\widehat{\mathbb{P}}_{\boldsymbol{Z}(0)}$ as Doob's $h$-transform of $\mathbb{P}_{\boldsymbol{Z(0)}}$ with the harmonic function $h(\boldsymbol{x})\coloneq \langle \boldsymbol{u}, \boldsymbol{x} \rangle$: \[\widehat{\mathbb{P}}_{\boldsymbol{Z}(0)}(A(n)) \coloneq  \mathbb{E}_{\boldsymbol{Z}(0)} \bigg[ \frac{ \langle \boldsymbol{u}, \boldsymbol{Z}(n) \rangle}{\langle \boldsymbol{u}, \boldsymbol{Z}(0) \rangle   } \mathds{1}_{A(n)} \bigg], \quad A(n) \in \mathcal{F}_{n}:= \sigma\{ \boldsymbol{Z}(k) : k \leq n \}.     \]
	\end{definition} To see that $h$ is actually harmonic, we note that \begin{align*}
		\mathbb{E}[ h(\boldsymbol{Z}(n+1)) \ \vert \ \mathcal{F}_{n}   ] = \mathbb{E}[ \langle \boldsymbol{u}, \boldsymbol{Z}(n+1) \rangle \ \vert \ \mathcal{F}_{n}  ] &= \langle \boldsymbol{u}, \mathbb{E}[ \boldsymbol{Z}(n+1) \ \vert \ \mathcal{F}_{n} ] \rangle \\
		&= \langle \boldsymbol{u}, \boldsymbol{Z}(n)  \mathbf{M}\rangle = \langle \mathbf{M} \boldsymbol{u} , \boldsymbol{Z}(n) \rangle = \langle \boldsymbol{u}, \boldsymbol{Z}(n) \rangle = h(\boldsymbol{Z}(n)).
	\end{align*} 
	We will throughout use both the notations $h(\cdot)$ and $\langle \boldsymbol{u}, \cdot \rangle$ depending on whether we want to emphasise the harmonicity or the linearity of the function. \\
	
	One advantage of working with $\widehat{\mathbb{P}}$ is that it allows us to get rid of the conditioning on $\Omega_{\mathrm{surv}}(n)$ since \[  \mathbb{E} \bigg[ \frac{ \langle \boldsymbol{u}, \boldsymbol{Z}(n) \rangle}{\langle \boldsymbol{u}, \boldsymbol{Z}(0) \rangle   } \mathds{1}_{A(n)} \mathds{1}_{\Omega_{\mathrm{surv}}(n)}   \bigg] = \mathbb{E} \bigg[ \frac{ \langle \boldsymbol{u}, \boldsymbol{Z}(n) \rangle}{\langle \boldsymbol{u}, \boldsymbol{Z}(0) \rangle   } \mathds{1}_{A(n)}  \bigg].  \] We will use this property  to establish a functional limit theorem (Theorem~\ref{theobessel}) and, in Theorem~\ref{theopopaverageancestraltypes}, an analogue to \cite[Thm.~3.1]{georgii2003supercritical}  concerning the population average of the ancestral types. Some of these results are established under the measure $\widehat{\mathbb{P}}$, but it is straightforward to translate them back to analogous results for $\mathbb{P}(\cdot \ \vert \ \Omega_{\mathrm{surv}}(n))$, see Proposition~\ref{corobessel} and Theorem~\ref{coropopaverage}. \\

	On the other hand, some of our results in terms of $\widehat{\mathbb{P}}$ (in particular Propositions~\ref{propentrancelaw} and \ref{proptransitionprob}) will turn out to have natural probabilistic interpretations related to size biasing (see, in particular, Propositions~\ref{propentrancelaw} and \ref{proptransitionprob}). This is because $\widehat{\mathbb{P}}$ is closely connected to size-biased multitype Galton--Watson trees in the sense of \cite{kurtz1997conceptual}; they will be  introduced in Section \ref{secsizebiasing}.

		
		\section{A functional limit theorem}\label{seclimittheorem}
		In this section, we prove the announced fluctuation limit. Note that this result differs significantly from the supercritical case, where the Kesten--Stigum theorem \cite{KestenStigum1966} tells us that we have exponential growth to a concentration limit. In contrast, as with \eqref{resjoffespitzer},  growth is linear in the critical case, and one obtains a limit that is truly random: 
		\begin{theorem}\label{theobessel}
			Let $(B(t))_{t \geq 0}$ be a $4$-dimensional squared Bessel process  started at $B(0)=0$. Assume that the third moments of the offspring distribution are finite and set, for $T>0$,
			\begin{align*}  & \boldsymbol{Y}^{(n)}\coloneq (\boldsymbol{Y}^{(n)}(t))_{t \in [0,T]} \quad\text{with}\quad \boldsymbol{Y}^{(n)}(t) = \frac{\boldsymbol{Z}(\lfloor nt \rfloor)}{n Q[\boldsymbol{u}]} \\
				\text{and} \quad  & \boldsymbol{Y}\coloneq (\boldsymbol{Y}(t))_{t \in [0,T]} \quad\text{with}\quad \boldsymbol{Y}(t) = B(t) \boldsymbol{v}.
			\end{align*} Then, under $\widehat{\mathbb{P}}_{\boldsymbol{z}}$ and for any $\boldsymbol{z} \in \mathbb{N}_{0}^{d}\setminus \{ \boldsymbol{0}\}$, $\boldsymbol{Y}^{(n)}$ converges in distribution, as $n \to \infty$, to $ \boldsymbol{Y}$ on $\mathcal{D}([0,T], \mathbb{R}^{d}).$
		\end{theorem}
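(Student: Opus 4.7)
My plan is to prove the theorem via the standard route of finite-dimensional convergence combined with tightness in $\mathcal{D}([0,T],\mathbb R^{d})$. Working under $\widehat{\mathbb{P}}_{\boldsymbol z}$ is convenient because $(\boldsymbol Z(n))_{n\in\mathbb N_{0}}$ remains a Markov chain there, so finite-dimensional convergence decomposes into two separate analyses: the one-dimensional (entrance) law of $\boldsymbol Y^{(n)}(t)$ for each $t>0$, and the convergence of the $s$-to-$t$ transition kernels. These will be established in Propositions~\ref{propentrancelaw} and~\ref{proptransitionprob}, and the present proof then assembles them via the Markov property and the Chapman--Kolmogorov equations.

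For the entrance law, I would rewrite the $h$-transform as a directional derivative of an ordinary Laplace functional,
\[
\widehat{\mathbb{E}}_{\boldsymbol z}\!\bigl[e^{-\langle\boldsymbol\mu,\boldsymbol Z(\lfloor nt\rfloor)\rangle/(nQ[\boldsymbol u])}\bigr] = -\frac{1}{\langle\boldsymbol u,\boldsymbol z\rangle}\,\partial_{\epsilon}\big|_{\epsilon=0}\,\mathbb{E}_{\boldsymbol z}\!\bigl[e^{-\langle\boldsymbol\mu/(nQ[\boldsymbol u])+\epsilon\boldsymbol u,\,\boldsymbol Z(\lfloor nt\rfloor)\rangle}\bigr],
\]
and then feed in the classical Joffe--Spitzer asymptotic for the $n$-step offspring generating function at arguments of order $1/n$, of which \eqref{resjoffespitzer} and \eqref{eqlimitOmegasurvn} are the leading consequences. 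Differentiating in $\epsilon$ produces an explicit rational expression, which one identifies with the Laplace transform of $B(t)\boldsymbol v$ and the Gamma$(2,\cdot)$ marginals characteristic of a squared Bessel process of dimension four started at the origin. The transition kernel from $s$ to $t$ is handled analogously: conditional on $\mathcal F_{\lfloor ns\rfloor}$, the descendant trees are independent, and the same expansion applies subtree by subtree with $\boldsymbol z$ replaced by $\boldsymbol Z(\lfloor ns\rfloor)$, producing the Bessel transition density from a generic non-zero starting value.

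For tightness, I would apply Aldous' criterion. The scalar $U(n):=\langle\boldsymbol u,\boldsymbol Z(n)\rangle$ is a nonnegative $\mathbb{P}$-martingale whose predictable quadratic variation $\sum_{k<n}\sum_{i}\sigma_{i}^{2}Z_{i}(k)$, with $\sigma_{i}^{2}:=\mathrm{Var}_{i}[\langle\boldsymbol u,\boldsymbol N\rangle]$, satisfies $\sum_{i}v_{i}\sigma_{i}^{2}=2Q[\boldsymbol u]$ by a short calculation combining the definition of $Q$ with the left-eigenvector identity $\boldsymbol v^{\top}\mathbf M=\boldsymbol v^{\top}$. This gives $\mathbb E_{\boldsymbol z}[U(n)^{2}]=O(n)$, and the Doob decomposition of $U$ under $\widehat{\mathbb P}$ (which acquires a drift of order $2Q[\boldsymbol u]$ per step) then yields $\widehat{\mathbb E}_{\boldsymbol z}[(U(n+k)-U(n))^{2}]=O(nk+k^{2})$. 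Translated to the scaled process, this becomes $\widehat{\mathbb E}_{\boldsymbol z}\bigl[|\boldsymbol Y^{(n)}(\tau+h)-\boldsymbol Y^{(n)}(\tau)|^{2}\bigr]=O(h)$ at stopping times $\tau\leq T$, exactly what Aldous' criterion requires. For the components of $\boldsymbol Y^{(n)}$ transverse to $\boldsymbol v$, I would use the spectral-gap decomposition $\mathbf M=\boldsymbol u\boldsymbol v^{\top}+\mathbf R$ with $\rho(\mathbf R)<1$ to prove $\sup_{t\in[0,T]}\|\boldsymbol Y^{(n)}(t)-\langle\boldsymbol u,\boldsymbol Y^{(n)}(t)\rangle\boldsymbol v\|\to 0$ in $\widehat{\mathbb P}_{\boldsymbol z}$-probability, reducing the whole matter to a scalar invariance principle.

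The main obstacle I anticipate is the entrance from the degenerate starting point: $\boldsymbol Y^{(n)}(0)=\boldsymbol z/(nQ[\boldsymbol u])\to\boldsymbol 0$, yet the limit must admit the non-degenerate Bessel entrance law at each $t>0$. Making the Joffe--Spitzer expansion quantitative enough — uniformly in $\boldsymbol\mu$ and in the starting state — so that the $\epsilon$-derivative converges to the correct rational limit without picking up spurious lower-order terms is exactly where the finite third-moment hypothesis enters. The size-biased Kurtz--Lyons spine construction (Section~\ref{secsizebiasing}) will give an independent conceptual handle on why the Gamma$(2,\cdot)$ law (and not some other Yaglom-type limit) appears, but it does not bypass these quantitative estimates.
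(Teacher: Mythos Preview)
Your high-level architecture matches the paper's exactly: finite-dimensional convergence plus tightness, with the $\boldsymbol u$-direction handled as a scalar process and the transverse directions shown to vanish via the spectral gap $\mathbf M=\boldsymbol u\boldsymbol v^{\top}+\mathbf R$, $\rho(\mathbf R)<1$. The implementations differ. For fdd convergence the paper avoids your Laplace-derivative trick altogether: it simply observes that $\boldsymbol x\mapsto\langle\boldsymbol u,\boldsymbol x\rangle e^{-\langle\boldsymbol\lambda,\boldsymbol x\rangle}$ is bounded and continuous, so the Joffe--Spitzer weak limit \eqref{resjoffespitzer} applies directly to it, yielding the size-biased (hence $\Gamma(2,\cdot)$) entrance law without any $\epsilon$-differentiation or uniform error control (Proposition~\ref{propentrancelaw}). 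For the transition kernel the paper gives a probabilistic decomposition---Bernoulli thinning of the $\lfloor n\boldsymbol c\rfloor$ ancestors into a Poisson number of surviving families---and then size-biases the resulting compound-Poisson law (Proposition~\ref{proptransitionprob}). For tightness in the $\boldsymbol u$-direction the paper uses Billingsley's two-point moment criterion rather than Aldous.

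One point you should correct: you locate the third-moment hypothesis in the fdd step, but it is not needed there---the paper's fdd argument uses only second moments via Joffe--Spitzer. Where third moments genuinely enter is \emph{your} tightness argument. Your key estimate $\widehat{\mathbb E}_{\boldsymbol z}\bigl[(U(n+k)-U(n))^{2}\bigr]=O(nk+k^{2})$ rests on bounding the $\widehat{\mathbb P}$-conditional variance of one increment, and by the $h$-transform identity $\widehat{\mathbb E}[U(j+1)^{2}\mid\mathcal F_{j}]=\mathbb E[U(j+1)^{3}\mid\mathcal F_{j}]/U(j)$; controlling this is a third-moment computation. The paper runs into the same obstruction (Lemma~\ref{lemVWthirdmoments} bounds $\mathbb E[H(n)^{3}]$ to feed the Billingsley criterion), so your Aldous route is viable, but the hypothesis is doing its work in the tightness half, not in the entrance-law half.
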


\begin{remark}\label{DankaPap}
In Section~\ref{secsizebiasing}, we describe a standard construction to represent a multitype branching process under the measure $\widehat{\mathbb{P}}$ as a size-biased tree, that is, a branching process with immigration combined with a so-called trunk or spine. In this construction, the law along the trunk differs from that of the remaining population, and the distribution of immigrants  in turn depends on the current type of the  trunk. The construction thus differs from a standard multitype branching process with immigration, which has no trunk, and where the immigrants are i.i.d. vectors. The paper by Danka and Pap \cite{Danka_Pap_16}, where an analogue of our Theorem~\ref{theobessel} was proved for the standard branching process with immigration, is therefore not directly applicable. It may be possible to adopt their arguments to our situation, but we prefer to give a direct proof for the $h$-transformed process, which is much simpler than the proof of Danka and Pap and does not require more assumptions than their approach. We also note in passing that a critical \emph{single-type}  $h$-transformed branching process is always
equivalent to a process with immigration.
\end{remark}

		The proof will be split into two parts: fdd convergence and characterisation of the limit (Section~\ref{secfdd}) and tightness (Section~\ref{sectightness}). But first we prove the following consequence of Theorem~\ref{theobessel}.
		\begin{proposition}\label{corobessel} For every bounded and continuous function $f: \mathcal{D}([0,1], \mathbb{R}^{d}) \rightarrow \mathbb{R}$, one has
			\[  \mathbb{E}_{\boldsymbol{z}} \big[ f (\boldsymbol{Y}^{(n)})  \ \big{\vert} \  \Omega_{\mathrm{surv}}(n)  \big] \underset{n \to \infty}{\longrightarrow} \mathbb{E}_{0} \bigg[  \frac{f(\boldsymbol{Y})}{h(\boldsymbol{Y}(1))}   \bigg],    \]where $\boldsymbol{Y}^{(n)}$ and $\boldsymbol{Y}$ are defined as in Theorem~\ref{theobessel} and the expectation on the right-hand side is taken with respect to the law of the Bessel process defined in Theorem~\ref{theobessel}.
		\end{proposition}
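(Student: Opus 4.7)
The plan is to reduce the statement to a calculation on the $h$-transformed measure and then correct for the fact that the natural integrand is unbounded. Starting from Definition~\ref{defhatP} applied to the indicator of $\Omega_{\mathrm{surv}}(n)$ (which has $\widehat{\mathbb{P}}_{\boldsymbol{z}}$-probability~$1$, since $h(\boldsymbol{Z}(n))=\langle\boldsymbol{u},\boldsymbol{Z}(n)\rangle$ vanishes exactly on $\Omega_{\mathrm{surv}}(n)^{c}$), I invert the change of measure: for bounded measurable $f$,
\[
\mathbb{E}_{\boldsymbol{z}}\!\left[f(\boldsymbol{Y}^{(n)})\mathds{1}_{\Omega_{\mathrm{surv}}(n)}\right]
 = \widehat{\mathbb{E}}_{\boldsymbol{z}}\!\left[\frac{h(\boldsymbol{z})}{h(\boldsymbol{Z}(n))}\,f(\boldsymbol{Y}^{(n)})\right]
 = \frac{h(\boldsymbol{z})}{nQ[\boldsymbol{u}]}\,
   \widehat{\mathbb{E}}_{\boldsymbol{z}}\!\left[\frac{f(\boldsymbol{Y}^{(n)})}{h(\boldsymbol{Y}^{(n)}(1))}\right],
\]
using $h(\boldsymbol{Z}(n))=nQ[\boldsymbol{u}]\,h(\boldsymbol{Y}^{(n)}(1))$. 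Dividing by $\mathbb{P}_{\boldsymbol{z}}(\Omega_{\mathrm{surv}}(n))$ and applying \eqref{eqlimitOmegasurvn} (so that $nQ[\boldsymbol{u}]\mathbb{P}_{\boldsymbol{z}}(\Omega_{\mathrm{surv}}(n))/h(\boldsymbol{z})\to 1$) reduces the claim to showing
\[
\widehat{\mathbb{E}}_{\boldsymbol{z}}\!\left[\frac{f(\boldsymbol{Y}^{(n)})}{h(\boldsymbol{Y}^{(n)}(1))}\right]
\;\xrightarrow[n\to\infty]{}\;
\mathbb{E}_{0}\!\left[\frac{f(\boldsymbol{Y})}{h(\boldsymbol{Y}(1))}\right].
\]

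The functional $\boldsymbol{y}\mapsto f(\boldsymbol{y})/h(\boldsymbol{y}(1))$ is continuous but unbounded as $h(\boldsymbol{y}(1))\downarrow 0$, so Theorem~\ref{theobessel} does not directly apply. I therefore introduce the truncation $\phi_{\delta}(\boldsymbol{y}):=f(\boldsymbol{y})/\bigl(h(\boldsymbol{y}(1))\vee\delta\bigr)$, which is bounded and continuous on $\mathcal{D}([0,T],\mathbb{R}^{d})$, and split the error by the triangle inequality into three pieces: (i) a prelimit remainder $|\widehat{\mathbb{E}}_{\boldsymbol{z}}[f(\boldsymbol{Y}^{(n)})/h(\boldsymbol{Y}^{(n)}(1))-\phi_{\delta}(\boldsymbol{Y}^{(n)})]|$, (ii) the convergence $\widehat{\mathbb{E}}_{\boldsymbol{z}}[\phi_{\delta}(\boldsymbol{Y}^{(n)})]\to\mathbb{E}_{0}[\phi_{\delta}(\boldsymbol{Y})]$ granted by Theorem~\ref{theobessel}, and (iii) a limit remainder $|\mathbb{E}_{0}[\phi_{\delta}(\boldsymbol{Y})-f(\boldsymbol{Y})/h(\boldsymbol{Y}(1))]|$.

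The heart of the argument, and the main obstacle, is controlling piece~(i) uniformly in $n$ as $\delta\downarrow 0$. Undoing the change of measure gives the clean identity
\[
\widehat{\mathbb{E}}_{\boldsymbol{z}}\!\left[\frac{\mathds{1}_{h(\boldsymbol{Y}^{(n)}(1))<\delta}}{h(\boldsymbol{Y}^{(n)}(1))}\right]
= \frac{nQ[\boldsymbol{u}]}{h(\boldsymbol{z})}\,\mathbb{P}_{\boldsymbol{z}}\!\bigl(0<h(\boldsymbol{Z}(n))<n\delta\,Q[\boldsymbol{u}]\bigr),
\]
which factors as $\tfrac{nQ[\boldsymbol{u}]}{h(\boldsymbol{z})}\mathbb{P}_{\boldsymbol{z}}(\Omega_{\mathrm{surv}}(n))\cdot \mathbb{P}_{\boldsymbol{z}}(h(\boldsymbol{Z}(n))/(nQ[\boldsymbol{u}])<\delta\mid\Omega_{\mathrm{surv}}(n))$. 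By \eqref{eqlimitOmegasurvn} the first factor tends to $1$, and by \eqref{resjoffespitzer}, together with $\langle\boldsymbol{u},\boldsymbol{v}\rangle=1$ and $W/Q[\boldsymbol{u}]\sim\expo(1)$, the second tends to $1-e^{-\delta}$, bounded by $\delta$. Piece~(iii) is then handled by the explicit Bessel law: $h(\boldsymbol{Y}(1))=B(1)\sim\chi^{2}_{4}$ with density $\tfrac14 xe^{-x/2}$, so $\mathbb{E}_{0}[\mathds{1}_{B(1)<\delta}/B(1)]=\int_{0}^{\delta}\tfrac14 e^{-x/2}\,dx\leq\delta/4\to 0$.

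Combining, $\limsup_{n}$ of the total error is bounded by $\|f\|_{\infty}\bigl((1-e^{-\delta})+\tfrac{\delta}{4}\bigr)$, which can be made arbitrarily small; this yields the required convergence and hence the proposition.
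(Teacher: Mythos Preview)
Your proof is correct and follows essentially the same route as the paper: rewrite the conditional expectation via the $h$-transform, reduce to convergence of $\widehat{\mathbb{E}}_{\boldsymbol{z}}[f(\boldsymbol{Y}^{(n)})/h(\boldsymbol{Y}^{(n)}(1))]$, truncate near $\{h(\boldsymbol{y}(1))=0\}$, and control the small-$h$ piece by undoing the change of measure and invoking \eqref{resjoffespitzer} and \eqref{eqlimitOmegasurvn}. The only cosmetic difference is that you use the continuous truncation $h\vee\delta$ (so Theorem~\ref{theobessel} applies directly), whereas the paper uses the hard cutoff $\mathds{1}_{\{h>\delta\}}$ and argues the discontinuity set $\{h(\boldsymbol{y}(1))=\delta\}$ is $\boldsymbol{Y}$-null; your explicit handling of piece~(iii) via the Bessel density is also a nice addition (note though that under the paper's scaling $B(1)\sim\Gamma(2,1)$ rather than $\chi^{2}_{4}=\Gamma(2,\tfrac12)$, so the bound is $1-e^{-\delta}$ rather than $\delta/4$---this does not affect the argument).
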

		\begin{proof}
			Without loss of generality, we may assume $f: \mathcal{D}([0,1], \mathbb{R}^{d}) \rightarrow [0,1]$ instead of $f: \mathcal{D}([0,1], \mathbb{R}^{d}) \rightarrow \mathbb{R}$. Then, by Definition~\ref{defhatP}, we find that
			\begin{align}
				\mathbb{E}_{\boldsymbol{z}}[ f(\boldsymbol{Y}^{(n)}) \ \vert \ \Omega_{\mathrm{surv}}(n) ] &= \frac{1}{\mathbb{P}_{\boldsymbol{z}}(\Omega_{\mathrm{surv}}(n))} \ \mathbb{E}\big[ f(\boldsymbol{Y}^{(n)}) \mathds{1}_{\{ \Omega_{\mathrm{surv}}(n) \}}     \big] \nonumber  \\
				&= \frac{h(\boldsymbol{z})}{\mathbb{P}_{\boldsymbol{z}}(\Omega_{\mathrm{surv}}(n))} \ \frac{1}{h(\boldsymbol{z})} \ \mathbb{E}\bigg[ \frac{h(\boldsymbol{Z}(n))}{h(\boldsymbol{Z}(n))} \ f(\boldsymbol{Y}^{(n)}) \mathds{1}_{\{ \Omega_{\mathrm{surv}}(n) \}}  \bigg] \nonumber \\ 
				&=  \frac{h(\boldsymbol{z})}{\mathbb{P}_{\boldsymbol{z}}(\Omega_{\mathrm{surv}}(n))} \ \widehat{\mathbb{E}}_{\boldsymbol{z}} \bigg[ \frac{f(\boldsymbol{Y}^{(n)})}{h(\boldsymbol{Z}(n))}     \bigg]. \label{VWUmleitung1}
			\end{align} Since $h(\boldsymbol{Z}(n))= n h(\boldsymbol{Y}^{(n)}(1))$ $Q[\boldsymbol{u}]$,
			we infer from \eqref{VWUmleitung1} that \[ \mathbb{E}_{\boldsymbol{z}} \big[ f(\boldsymbol{Y}^{(n)}) \ \big{\vert} \ \Omega_{\mathrm{surv}}(n)  \big] = \frac{h(\boldsymbol{z})}{n Q[\boldsymbol{u}] \mathbb{P}_{\boldsymbol{z}}(\Omega_{\mathrm{surv}}(n))} \ \widehat{\mathbb{E}}_{\boldsymbol{z}} \bigg[ \frac{f(\boldsymbol{Y}^{(n)})}{h(\boldsymbol{Y}^{(n)}(1))}      \bigg].   \] Recalling from Definition~\ref{defhatP} that $h(\boldsymbol{x})= \langle \boldsymbol{u}, \boldsymbol{x} \rangle$, we already know from \eqref{eqlimitOmegasurvn} that \begin{equation*}
				n\mathbb{P}_{\boldsymbol{z}} (\Omega_{\mathrm{surv}}(n)) \underset{n \to \infty}{\longrightarrow} \frac{h( \boldsymbol{z})}{Q[\boldsymbol{u}]}. 
			\end{equation*} Thus, it remains to show that $\widehat{\mathbb{E}}_{\boldsymbol{z}} \big[ f(\boldsymbol{Y}^{(n)})/h(\boldsymbol{Y}^{(n)}(1)) \big] $ converges to $\mathbb{E}_{0} \big[  f(\boldsymbol{Y})/h(\boldsymbol{Y}(1))  \big]$. \\
			
			Fix some $\delta >0$. Then \begin{align}
				\widehat{\mathbb{E}}_{\boldsymbol{z}} \bigg[ \frac{f(\boldsymbol{Y}^{(n)})}{h(\boldsymbol{Y}^{(n)}(1))} \bigg] = \widehat{\mathbb{E}}_{\boldsymbol{z}} \bigg[ \frac{f(\boldsymbol{Y}^{(n)})}{h(\boldsymbol{Y}^{(n)}(1))} \mathds{1}_{\{ h(\boldsymbol{Y}^{(n)}(1)) \leq \delta \}} \bigg] + \widehat{\mathbb{E}}_{\boldsymbol{z}} \bigg[ \frac{f(\boldsymbol{Y}^{(n)})}{h(\boldsymbol{Y}^{(n)}(1))} \mathds{1}_{\{h(\boldsymbol{Y}^{(n)}(1)) > \delta    \}} \bigg]. \label{VWUmleitung2}
			\end{align} For the first expectation on the right-hand side, we use the assumption on $f$ and, once more, the fact that $h(\boldsymbol{Z}(n))= nh(\boldsymbol{Y}^{(n)}(1))$ to obtain 	
			\begin{align*}
				\widehat{\mathbb{E}}_{\boldsymbol{z}} \bigg[ \frac{f(\boldsymbol{Y}^{(n)})}{h(\boldsymbol{Y}^{(n)}(1))} \mathds{1}_{\{ h(\boldsymbol{Y}^{(n)}(1)) \leq \delta \}} \bigg] &\leq \widehat{\mathbb{E}}_{\boldsymbol{z}} \bigg[ \frac{1}{h(\boldsymbol{Y}^{(n)}(1))} \mathds{1}_{\{ h(\boldsymbol{Y}^{(n)}(1)) \leq \delta \}} \bigg] \\
				&= \frac{n}{h(\boldsymbol{z})} \ \mathbb{P}_{\boldsymbol{z}}(h(\boldsymbol{Y}^{(n)}(1)) \leq \delta, \ \Omega_{\mathrm{surv}}(n)) \\
				&= \frac{n \mathbb{P}_{\boldsymbol{z}} (\Omega_{\mathrm{surv}}(n)) }{h(\boldsymbol{z})} \ \mathbb{P}_{\boldsymbol{z}}(h(\boldsymbol{Y}^{(n)}(1)) \leq \delta \ \vert \ \Omega_{\mathrm{surv}}(n)).
			\end{align*} By \eqref{resjoffespitzer}, $\mathbb{P}_{\boldsymbol{z}}(h(\boldsymbol{Y}^{(n)}(1)) \leq \delta \ \vert \ \Omega_{\mathrm{surv}}(n)) \leq c \delta$ for all $n \geq 1$ and some constant $c>0$ (independent of $\delta$). This implies that \begin{equation}
				\widehat{\mathbb{E}}_{\boldsymbol{z}} \bigg[ \frac{f(\boldsymbol{Y}^{(n)})}{h(\boldsymbol{Y}^{(n)}(1))} \mathds{1}_{\{ h(\boldsymbol{Y}^{(n)}(1)) \leq \delta \}} \bigg] \leq \frac{n \mathbb{P}_{\boldsymbol{z}} (\Omega_{\mathrm{surv}}(n)) }{h(\boldsymbol{z})}  c \delta. \label{VWUmleitung3}
			\end{equation}
			To deal with the second expectation on the right-hand side of \eqref{VWUmleitung2}, we notice that the functional \[ f_{\delta}(\boldsymbol{y}) \coloneq \frac{f(\boldsymbol{y})}{h(\boldsymbol{y}(1))} \ \mathds{1}_{\{ h(\boldsymbol{y}(1)) > \delta  \}}   \] is bounded and its set of discontinuities is contained in $\{ \boldsymbol{y} : h(\boldsymbol{y}(1))= \delta \}$, which has zero probability with respect to the limiting process $\boldsymbol{Y}(t)$. Therefore, due to Theorem~\ref{theobessel}, 
			\begin{equation}
				\widehat{\mathbb{E}}_{\boldsymbol{z}} \bigg[ \frac{f(\boldsymbol{Y}^{(n)})}{h(\boldsymbol{Y}^{(n)}(1))} \mathds{1}_{\{h(\boldsymbol{Y}^{(n)}(1)) > \delta    \}} \bigg] \underset{n \to \infty}{\longrightarrow} \mathbb{E}_{0} \bigg[  \frac{f(\boldsymbol{Y})}{h(\boldsymbol{Y}(1))} \mathds{1}_{\{h(\boldsymbol{Y}(1)) > \delta    \}}    \bigg] \label{VWUmleitung4}.
			\end{equation} 	
			Now, taking $\limsup$ as $n \to \infty$ in \eqref{VWUmleitung2} we obtain that \begin{align*} \limsup_{n \to \infty} \widehat{\mathbb{E}}_{\boldsymbol{z}} \bigg[ \frac{f(\boldsymbol{Y}^{(n)})}{h(\boldsymbol{Y}^{(n)}(1))} \bigg] =& \ \limsup_{n \to \infty} \widehat{\mathbb{E}}_{\boldsymbol{z}} \bigg[ \frac{f(\boldsymbol{Y}^{(n)})}{h(\boldsymbol{Y}^{(n)}(1))} \mathds{1}_{\{ h(\boldsymbol{Y}^{(n)}(1)) \leq \delta \}} \bigg] \\				&\qquad \qquad \qquad+ \limsup_{n \to \infty} \widehat{\mathbb{E}}_{\boldsymbol{z}} \bigg[ \frac{f(\boldsymbol{Y}^{(n)})}{h(\boldsymbol{Y}^{(n)}(1))} \mathds{1}_{\{h(\boldsymbol{Y}^{(n)}(1)) > \delta    \}} \bigg].    
				\end{align*} 
				The second $\limsup$ is then already given by \eqref{VWUmleitung4}. For the first $\limsup$, note that the left-hand side does not depend on $\delta$, so we may take $\delta \to 0$ to find \[ \lim_{\delta \to 0} \limsup_{n \to \infty}  \widehat{\mathbb{E}}_{\boldsymbol{z}} \bigg[ \frac{f(\boldsymbol{Y}^{(n)})}{h(\boldsymbol{Y}^{(n)}(1))} \mathds{1}_{\{ h(\boldsymbol{Y}^{(n)}(1)) \leq \delta \}} \bigg] \leq \lim_{\delta \to 0} \limsup_{n \to \infty}   \frac{n \mathbb{P}_{\boldsymbol{z}} (\Omega_{\mathrm{surv}}(n)) }{h(\boldsymbol{z})}  c \delta =0 \] by \eqref{VWUmleitung3} and \eqref{eqlimitOmegasurvn}, and therefore  \[ \limsup_{n \to \infty}	\widehat{\mathbb{E}}_{\boldsymbol{z}} \bigg[ \frac{f(\boldsymbol{Y}^{(n)})}{h(\boldsymbol{Y}^{(n)}(1))} \bigg] =  \mathbb{E}_{0} \bigg[  \frac{f(\boldsymbol{Y})}{h(\boldsymbol{Y}(1))}   \bigg].   \] Repeating the same arguments for $\liminf$ proves the result.
		\end{proof}
		
		
		\subsection{Fdd convergence}\label{secfdd}
		During this section, we establish the convergence of all finite-dimensional distributions under both $\mathbb{P}$ and $\widehat{\mathbb{P}}$. To do so, we first extend the entrance law of \cite[Thm.~6]{joffe1967multitype} to $\widehat{\mathbb{P}}_{\boldsymbol{z}}$ (Proposition~\ref{propentrancelaw}) and then identify the limiting transition probabilities (Proposition~\ref{proptransitionprob}).
		\begin{proposition}[Entrance law]
			\label{propentrancelaw}
			Let $\boldsymbol{Z}(0)= \boldsymbol{z} \in \mathbb{N}_{0}^{d}\setminus \{ \boldsymbol{0}\}$, $t \in [0,T]$, $T>0$. Then, under $\mathbb{P}_{\boldsymbol{z}}$, we have
			\begin{equation}\label{eqentrancelawP}
				\frac{\boldsymbol{Z}(\lfloor nt \rfloor)}{n} \ \Big{\vert} \ \Omega_{\mathrm{surv}}(\lfloor nt \rfloor) \ \Rightarrow \ W_{t} \boldsymbol{v} \quad \text{as } \, n \to \infty, 
			\end{equation} where $W_{t} \coloneq tW \sim \expo(1/(Q[\boldsymbol{u}]t))$, and, under $\widehat{\mathbb{P}}_{\boldsymbol{z}}$ 
			\begin{equation}
				\frac{\boldsymbol{Z}(\lfloor nt \rfloor)}{n} \ \Rightarrow \ W_{t}^{*} \boldsymbol{v} \quad \text{as } \, n \to \infty, 
				\label{eqentrancelawhatP}
			\end{equation} where $W_{t}^{*}$ has the size-biased distribution of $W_{t}$, namely $W_{t}^{*} \sim \Gamma(2, 1/(Q[\boldsymbol{u}]t) )$.
		\end{proposition}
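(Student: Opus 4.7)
My approach is to derive (1) directly from the classical Joffe--Spitzer entrance law \eqref{resjoffespitzer} by a substitution, and then to deduce (2) from (1) by rewriting expectations under $\widehat{\mathbb{P}}_{\boldsymbol{z}}$ as size-biased expectations under $\mathbb{P}_{\boldsymbol{z}}\big(\cdot \mid \Omega_{\mathrm{surv}}(\lfloor nt\rfloor)\big)$. For part (1), set $m = \lfloor nt \rfloor$; since $m \to \infty$, \eqref{resjoffespitzer} applied at time $m$ yields $\boldsymbol{Z}(m)/m \mid \Omega_{\mathrm{surv}}(m) \Rightarrow W\boldsymbol{v}$ with $W \sim \expo(1/Q[\boldsymbol{u}])$. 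Multiplying by the deterministic scalar $m/n \to t$ and invoking the continuous mapping theorem gives $\boldsymbol{Z}(\lfloor nt\rfloor)/n \mid \Omega_{\mathrm{surv}}(\lfloor nt\rfloor) \Rightarrow tW\boldsymbol{v}$, and the scaling property $tW \sim \expo\big(1/(tQ[\boldsymbol{u}])\big)$ identifies the limit as $W_t\boldsymbol{v}$.

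For part (2), let $f$ be bounded and continuous and again write $m = \lfloor nt\rfloor$. Definition~\ref{defhatP} together with the fact that $\langle \boldsymbol{u}, \boldsymbol{Z}(m)\rangle$ vanishes on $\Omega_{\mathrm{surv}}(m)^{c}$ gives
\[
\widehat{\mathbb{E}}_{\boldsymbol{z}}\big[f(\boldsymbol{Z}(m)/n)\big]
= \frac{n\,\mathbb{P}_{\boldsymbol{z}}(\Omega_{\mathrm{surv}}(m))}{\langle \boldsymbol{u},\boldsymbol{z}\rangle}\;
\mathbb{E}_{\boldsymbol{z}}\!\left[\big\langle \boldsymbol{u}, \boldsymbol{Z}(m)/n\big\rangle\, f(\boldsymbol{Z}(m)/n)\,\Big|\, \Omega_{\mathrm{surv}}(m)\right].
\]
By \eqref{eqlimitOmegasurvn}, the prefactor converges to $1/(tQ[\boldsymbol{u}])$, and by part (1) together with $\langle \boldsymbol{u},\boldsymbol{v}\rangle = 1$, the integrand inside the conditional expectation converges weakly to $W_t\,f(W_t\boldsymbol{v})$. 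Granted uniform integrability (addressed below), the whole expression tends to
\[
\frac{\mathbb{E}\big[W_t\, f(W_t\boldsymbol{v})\big]}{tQ[\boldsymbol{u}]} = \frac{\mathbb{E}\big[W_t\, f(W_t\boldsymbol{v})\big]}{\mathbb{E}[W_t]},
\]
which is by definition $\mathbb{E}\big[f(W_t^{*}\boldsymbol{v})\big]$ with $W_t^{*}$ the size-biased version of $W_t$. A direct density computation shows that size-biasing of $\expo(\mu)$ yields $\Gamma(2,\mu)$, so $W_t^{*} \sim \Gamma\big(2, 1/(tQ[\boldsymbol{u}])\big)$ as claimed.

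The only delicate step is the uniform integrability needed to pass to the limit in the weighted conditional expectation, and this is where I expect the main technical subtlety. I would handle it without any additional moment assumption by exploiting the harmonicity of $h$, which gives $\mathbb{E}_{\boldsymbol{z}}\big[\langle \boldsymbol{u}, \boldsymbol{Z}(m)\rangle\big] = \langle \boldsymbol{u},\boldsymbol{z}\rangle$ and hence
\[
\mathbb{E}_{\boldsymbol{z}}\!\left[\big\langle \boldsymbol{u}, \boldsymbol{Z}(m)/n\big\rangle \,\Big|\, \Omega_{\mathrm{surv}}(m)\right] = \frac{\langle \boldsymbol{u},\boldsymbol{z}\rangle}{n\,\mathbb{P}_{\boldsymbol{z}}(\Omega_{\mathrm{surv}}(m))} \underset{n\to\infty}{\longrightarrow} tQ[\boldsymbol{u}] = \mathbb{E}[W_t]
\]
by \eqref{eqlimitOmegasurvn}. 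Combined with the weak convergence $\langle \boldsymbol{u}, \boldsymbol{Z}(m)/n\rangle \Rightarrow W_t$ from part (1), the classical fact that convergence in distribution of nonnegative random variables together with convergence of first moments implies uniform integrability (and $L^1$-convergence) closes the argument; since $f$ is bounded, this uniform integrability immediately transfers to the product $\langle \boldsymbol{u}, \boldsymbol{Z}(m)/n\rangle\, f(\boldsymbol{Z}(m)/n)$.
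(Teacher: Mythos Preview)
Your proof is correct and follows the same overall strategy as the paper: part~(1) by rescaling \eqref{resjoffespitzer}, and part~(2) by unwinding the definition of $\widehat{\mathbb{P}}$ to reduce to a weighted conditional expectation under $\mathbb{P}_{\boldsymbol{z}}(\cdot\mid\Omega_{\mathrm{surv}}(\lfloor nt\rfloor))$ and then applying part~(1) together with \eqref{eqlimitOmegasurvn}.

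The one point where you and the paper diverge is the handling of the unbounded factor $\langle \boldsymbol{u},\boldsymbol{Z}(m)/n\rangle$. The paper sidesteps uniform integrability entirely: instead of testing against all bounded continuous $f$, it restricts to $f$ for which $\boldsymbol{x}\mapsto \langle \boldsymbol{u},\boldsymbol{x}\rangle f(\boldsymbol{x})$ is itself bounded and continuous, so that weak convergence applies directly; it then observes that the Laplace functions $f(\boldsymbol{x})=e^{-\langle\boldsymbol{\lambda},\boldsymbol{x}\rangle}$ fall into this class and are enough to identify the limit. Your route---proving uniform integrability of $\langle \boldsymbol{u},\boldsymbol{Z}(m)/n\rangle$ under the conditional law via the martingale identity $\mathbb{E}_{\boldsymbol{z}}[\langle\boldsymbol{u},\boldsymbol{Z}(m)\rangle]=\langle\boldsymbol{u},\boldsymbol{z}\rangle$ combined with the ``weak convergence plus convergence of means'' criterion---is a clean alternative that yields convergence for \emph{all} bounded continuous $f$ at once, at the modest cost of invoking one extra classical fact. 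Both arguments avoid any additional moment hypotheses.
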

		\begin{proof}
			We begin with the limiting entrance law under $\mathbb{P}$. Since $\lfloor nt \rfloor/n \longrightarrow t$ as $n \to \infty$, \eqref{resjoffespitzer} entails that, under $ \mathbb{P}_{\boldsymbol{z}}$,
			\[ \frac{\boldsymbol{Z}(\lfloor nt \rfloor)}{n} \ \Big{\vert} \ \Omega_{\mathrm{surv}}(\lfloor nt \rfloor)\Rightarrow tW \boldsymbol{v} = W_{t}\boldsymbol{v}.  
			\]
			
			For the limiting entrance law under $\widehat{\mathbb{P}}$, \eqref{eqlimitOmegasurvn} tells us that 
			\begin{equation} \label{eqlimitOmegasurvt}
			n \mathbb{P}_{\boldsymbol{z}}(\Omega_{\mathrm{surv}}(\lfloor nt \rfloor)) = \frac{n}{\lfloor nt \rfloor} \lfloor nt \rfloor \mathbb{P}_{\boldsymbol{z}}(\Omega_{\mathrm{surv}}(\lfloor nt \rfloor))  \underset{n \to \infty}{\longrightarrow} \frac{\langle \boldsymbol{u}, \boldsymbol{z} \rangle}{Q[\boldsymbol{u}]t}.  
			\end{equation}
			
			Using the definition of $\widehat{\mathbb{P}}$, we therefore get for $f: \mathbb{R}_{+}^{d} \rightarrow \mathbb{R}$ such that $\boldsymbol{x} \mapsto \langle \boldsymbol{u}, \boldsymbol{x}\rangle f(\boldsymbol{x})$ is bounded and continuous:   \begin{align*}
				\widehat{\mathbb{E}}_{\boldsymbol{z}}\bigg[  f\Big( \frac{\boldsymbol{Z}(\lfloor nt \rfloor)}{n}    \Big)  \bigg] &= \frac{1}{\langle \boldsymbol{u}, \boldsymbol{z}\rangle} \mathbb{E}_{\boldsymbol{z}} \bigg[ \big\langle \boldsymbol{u}, \boldsymbol{Z}(\lfloor nt \rfloor)  \big\rangle \ f\Big( \frac{\boldsymbol{Z}(\lfloor nt \rfloor)}{n}    \Big)    \bigg] \\
				&= \frac{n \mathbb{P}_{\boldsymbol{z}}(\Omega_{\mathrm{surv}}(\lfloor nt \rfloor))}{\langle \boldsymbol{u}, \boldsymbol{z} \rangle} \mathbb{E}_{\boldsymbol{z}} \bigg[ \Big\langle \boldsymbol{u}, \frac{\boldsymbol{Z}(\lfloor nt \rfloor)}{n}  \Big\rangle f\Big( \frac{\boldsymbol{Z}(\lfloor nt \rfloor)}{n}    \Big) \ \bigg{\vert} \ \Omega_{\mathrm{surv}}(\lfloor nt \rfloor)   \bigg], 
			\end{align*}  Taking $n \to \infty$, using \eqref{eqlimitOmegasurvt} and \eqref{resjoffespitzer}, and recalling 
			$\langle \boldsymbol{u}, \boldsymbol{v} \rangle = 1$, this yields \[ \lim_{n \to \infty} 	\widehat{\mathbb{E}}_{\boldsymbol{z}}\bigg[  f\Big( \frac{\boldsymbol{Z}(\lfloor nt \rfloor)}{n}    \Big)  \bigg] = \frac{\langle \boldsymbol{u}, \boldsymbol{v}\rangle}{Q[\boldsymbol{u}]t} \ \mathbb{E} [ W_{t} f(W_{t} \boldsymbol{v}) ] = \frac{\mathbb{E} [ W_{t} f(W_{t} \boldsymbol{v}) ]}{Q[\boldsymbol{u}]t}.   \]
			
			Since, in particular, $\boldsymbol{x} \mapsto \langle \boldsymbol{u}, \boldsymbol{x}\rangle e^{- \langle \boldsymbol{\lambda}, \boldsymbol{x} \rangle}$, $\boldsymbol{x},\boldsymbol{\lambda} \in \mathbb{R}_+^{d}$, is bounded and continuous, this already implies that the stationary measure under $\widehat{\mathbb{P}}$ is obtained from the original one by  size-biasing $W_t$. In particular, this turns the density of $W_{t}$, namely $f(w)=  e^{- w/(Q[\boldsymbol{u}]t)}/(Q[\boldsymbol{u}]t)$, into \begin{equation} \hat{f}(w)= \frac{w f(w)}{\int_{0}^{\infty} \nu f(\nu) \mathrm{d} \nu} = \frac{1}{(Q[\boldsymbol{u}]t)^2}we^{- \frac{w}{Q[\boldsymbol{u}]t}},  \label{EB4}
			\end{equation} which is the density of $\Gamma(2, 1/(Q[\boldsymbol{u}]t))= \expo(1/(Q[\boldsymbol{u}]t)) \star \expo(1/(Q[\boldsymbol{u}]t)) $. This completes the proof. \qedhere	
		\end{proof}
		
		\begin{proposition}[Limiting transition probabilities]
			\label{proptransitionprob}
			Let $\boldsymbol{c} \in \mathbb{R}_{+}^{d}$, $t \in [0,T]$, $T>0$. Under $\mathbb{P}_{\lfloor n \boldsymbol{c} \rfloor}$, where the floor function of a vector is meant componentwise, we then have
			\begin{equation}
				\frac{\boldsymbol{Z}(\lfloor nt \rfloor)}{n} \Rightarrow  S_{K_{t},t} \boldsymbol{v}
				\quad \text{as } \, n \to \infty,
				 \label{eqtransitionP}
			\end{equation} where $S_{K_{t}, t}\coloneq \sum_{k =1}^{K_{t}} W_{t}^{(k)}$, $(W_{t}^{(k)})_{k \in \mathbb{N}}$ i.i.d. with $W_{t}^{(1)} \sim \expo\big(1/(Q[\boldsymbol{u}]t)\big)$, and $K_{t} \sim \poi\big(\langle \boldsymbol{u}, \boldsymbol{c} \rangle/(Q[\boldsymbol{u}]t)\big)$. Under $\widehat{\mathbb{P}}_{\lfloor n \boldsymbol{c} \rfloor}$, then
			\begin{equation}
				\frac{\boldsymbol{Z}(\lfloor nt \rfloor)}{n} 	\Rightarrow  (S_{K_{t},t})^{*} \boldsymbol{v} \quad \text{as } \, n \to \infty, \label{eqtransitionhatP}
			\end{equation} where $(S_{K_{t},t})^{*}$ has the size-biased distribution of $S_{K_{t}, t}$. In particular, for $\boldsymbol{c}=x \boldsymbol{v}$ with $x>0$, the density of $(S_{K_{t},t})^{*}$ reads \[ p_{t}(x,y) = e^{- \frac{x+y}{Q[\boldsymbol{u}]t}} \  \frac{1}{Q[\boldsymbol{u}]t} \ \frac{\sqrt{y}}{\sqrt{x}} \ \sum_{k=0}^{\infty} \frac{1}{k!(k+1)!} \ \bigg( \frac{\sqrt{x} \sqrt{y}}{Q[\boldsymbol{u}]t}  \bigg)^{2k+1},      \] where the right-hand side corresponds to the transition density of a $4$-dimensional squared Bessel process up to a scaling constant $Q[\boldsymbol{u}]$. 
		\end{proposition}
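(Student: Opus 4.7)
The plan is to exploit the branching property at time $0$ and compute Laplace transforms, reducing the joint statement to the entrance law of Proposition~\ref{propentrancelaw}. Under $\mathbb{P}_{\lfloor n\boldsymbol{c}\rfloor}$, the process decomposes as a sum of independent subtrees, each with a tiny (order $1/n$) probability of contributing non-negligibly at time $\lfloor nt\rfloor$; this is the classical setup for a Poisson thinning argument, and the compound-Poisson description of $S_{K_t,t}\boldsymbol{v}$ should emerge directly. The transition law under $\widehat{\mathbb{P}}$ will then be a formal consequence via Definition~\ref{defhatP}, provided one has uniform integrability of $\langle\boldsymbol{u},\boldsymbol{Z}(\lfloor nt\rfloor)/n\rangle$.

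\textbf{Limit under $\mathbb{P}$.} Fix $\boldsymbol{\lambda}\in\mathbb{R}_+^{d}$ and write, using the branching property,
\[
\mathbb{E}_{\lfloor n\boldsymbol{c}\rfloor}\bigl[e^{-\langle\boldsymbol{\lambda},\boldsymbol{Z}(\lfloor nt\rfloor)/n\rangle}\bigr]
=\prod_{i\in[d]}\Bigl(\mathbb{E}_{\boldsymbol{e}_i}\bigl[e^{-\langle\boldsymbol{\lambda},\boldsymbol{Z}(\lfloor nt\rfloor)/n\rangle}\bigr]\Bigr)^{\lfloor nc_i\rfloor}.
\]
Splitting each single-ancestor factor along $\Omega_{\mathrm{surv}}(\lfloor nt\rfloor)$ and its complement, and combining \eqref{eqlimitOmegasurvn} (applied with $\boldsymbol{z}=\boldsymbol{e}_i$) with the entrance law \eqref{eqentrancelawP}, I expect
\[
\mathbb{E}_{\boldsymbol{e}_i}\bigl[e^{-\langle\boldsymbol{\lambda},\boldsymbol{Z}(\lfloor nt\rfloor)/n\rangle}\bigr]
=1-\frac{u_i}{Q[\boldsymbol{u}]\,nt}\bigl(1-\psi_t(\boldsymbol{\lambda})\bigr)+o(1/n),\qquad \psi_t(\boldsymbol{\lambda}):=\frac{1}{1+\langle\boldsymbol{\lambda},\boldsymbol{v}\rangle Q[\boldsymbol{u}]t}.
\]
Taking the product over $i$ and the $\lfloor nc_i\rfloor$-th power yields the limit $\exp\bigl(-\lambda_t(1-\psi_t(\boldsymbol{\lambda}))\bigr)$ with $\lambda_t:=\langle\boldsymbol{u},\boldsymbol{c}\rangle/(Q[\boldsymbol{u}]t)$, which is the Laplace transform of a compound Poisson sum $S_{K_t,t}\boldsymbol{v}$ with $K_t\sim\poi(\lambda_t)$ and i.i.d.\ $\expo(1/(Q[\boldsymbol{u}]t))$ jumps. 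This establishes \eqref{eqtransitionP}.

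\textbf{Limit under $\widehat{\mathbb{P}}$.} From Definition~\ref{defhatP},
\[
\widehat{\mathbb{E}}_{\lfloor n\boldsymbol{c}\rfloor}\bigl[f(\boldsymbol{Z}(\lfloor nt\rfloor)/n)\bigr]
=\frac{n}{\langle\boldsymbol{u},\lfloor n\boldsymbol{c}\rfloor\rangle}\,
\mathbb{E}_{\lfloor n\boldsymbol{c}\rfloor}\bigl[\langle\boldsymbol{u},\boldsymbol{Z}(\lfloor nt\rfloor)/n\rangle\,f(\boldsymbol{Z}(\lfloor nt\rfloor)/n)\bigr].
\]
The prefactor tends to $1/\langle\boldsymbol{u},\boldsymbol{c}\rangle$. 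To push the limit inside, I will verify uniform integrability of $\langle\boldsymbol{u},\boldsymbol{Z}(\lfloor nt\rfloor)/n\rangle$ by bounding its second moment: $\langle\boldsymbol{u},\boldsymbol{Z}(\lfloor nt\rfloor)\rangle$ is a non-negative martingale whose variance grows linearly under $\mathbb{P}_{\boldsymbol{e}_i}$ by the standard critical second-moment formula, so summing over the $\lfloor nc_i\rfloor$ independent subtrees produces a bound of order $n^2$, hence $\mathbb{E}[\langle\boldsymbol{u},\boldsymbol{Z}(\lfloor nt\rfloor)/n\rangle^{2}]=O(1)$. Together with the weak limit from Step~1 and $\langle\boldsymbol{u},S_{K_t,t}\boldsymbol{v}\rangle=S_{K_t,t}$, this yields
\[
\lim_{n\to\infty}\widehat{\mathbb{E}}_{\lfloor n\boldsymbol{c}\rfloor}\bigl[f(\boldsymbol{Z}(\lfloor nt\rfloor)/n)\bigr]
=\frac{\mathbb{E}[S_{K_t,t}\,f(S_{K_t,t}\boldsymbol{v})]}{\langle\boldsymbol{u},\boldsymbol{c}\rangle}
=\mathbb{E}\bigl[f((S_{K_t,t})^{*}\boldsymbol{v})\bigr],
\]
since $\mathbb{E}[S_{K_t,t}]=\lambda_t\cdot Q[\boldsymbol{u}]t=\langle\boldsymbol{u},\boldsymbol{c}\rangle$, giving \eqref{eqtransitionhatP}.

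\textbf{Density and Bessel identification.} For $\boldsymbol{c}=x\boldsymbol{v}$ one has $\lambda_t=x/(Q[\boldsymbol{u}]t)$. Conditioning on $K_t=k\ge 1$, $S_{K_t,t}$ is $\mathrm{Gamma}(k,1/(Q[\boldsymbol{u}]t))$-distributed, so its density on $(0,\infty)$ reads
\[
f_{S_{K_t,t}}(y)=e^{-\lambda_t-y/(Q[\boldsymbol{u}]t)}\sum_{k=1}^{\infty}\frac{\lambda_t^{k}}{k!\,(k-1)!}\,\Bigl(\frac{1}{Q[\boldsymbol{u}]t}\Bigr)^{k}y^{k-1}.
\]
Size-biasing by $y$ and dividing by $\mathbb{E}[S_{K_t,t}]=x$, then re-indexing via $k=j+1$, reproduces the stated $p_t(x,y)$ after a short manipulation. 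The appearance of the modified Bessel function $I_1$ in the series—of the form $\sum_{k\ge 0}(z/2)^{2k+1}/(k!(k+1)!)$—matches the transition density of a four-dimensional squared Bessel process started at $x/Q[\boldsymbol{u}]$, with the time-scale and diffusion coefficient absorbed into the single constant $Q[\boldsymbol{u}]$; this is consistent with $\boldsymbol{Y}(t)=B(t)\boldsymbol{v}$ in Theorem~\ref{theobessel}. The main obstacle in this plan is the uniform-integrability step under $\widehat{\mathbb{P}}$ (everything else is essentially algebra or a direct application of earlier results), and this is handled cleanly by the variance estimate afforded by the finite second-moment hypothesis.
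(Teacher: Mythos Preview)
Your proposal is correct and follows essentially the same route as the paper: branching decomposition at time~$0$ plus the entrance law for \eqref{eqtransitionP}, then the change of measure for \eqref{eqtransitionhatP}, then an explicit density computation. Two minor differences are worth noting. First, for the $\widehat{\mathbb{P}}$-limit the paper sidesteps uniform integrability entirely by restricting to test functions $f$ for which $\boldsymbol{x}\mapsto\langle\boldsymbol{u},\boldsymbol{x}\rangle f(\boldsymbol{x})$ is already bounded and continuous (Laplace exponentials suffice), so weak convergence from Step~1 applies directly; your second-moment bound works too but is a slightly heavier tool. Second, for the density the paper first establishes the structural identity $(S_{K_t,t})^{*}\overset{d}{=}S_{K_t+2,t}$ via the size-bias rules $K_t^{*}\overset{d}{=}K_t+1$ and $(W_t^{(1)})^{*}\overset{d}{=}W_t^{(1)}+W_t^{(2)}$, and then reads off the density as a Poisson--Gamma mixture shifted by two; your direct size-biasing of the mixture density arrives at the same series after the reindexing you describe.
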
 
		\begin{remark}\label{remmeaningKt}
			We will see that the random variable $K_{t}$ actually denotes the limit of the number of initial individuals that have at least one surviving descendant at time $\lfloor n t \rfloor$, on $\{ \boldsymbol{Z}(0)= \lfloor n \boldsymbol{c} \rfloor  \}$, as $n \to \infty$.
		\end{remark}
		
		\begin{proof}[Proof of Proposition \ref{proptransitionprob}]
			
			We begin with the limiting transition probabilities under $\mathbb{P}_{\lfloor n \boldsymbol{c} \rfloor}$ for $\boldsymbol{c} \in \mathbb{R}_{+}^{d}$. We may then write 
			\begin{equation}\label{Bernoulli}
			\frac{\boldsymbol{Z}(\lfloor nt \rfloor)}{n} \ \Big{\vert} \ \{ \boldsymbol{Z}(0)= \lfloor n \boldsymbol{c} \rfloor \} = \sum_{i=1}^{d} \sum_{j=1}^{\lfloor n c_{i} \rfloor }B^{(i,j)}( \lfloor nt \rfloor) \ \frac{\boldsymbol{Z}_{\mathrm{surv}}^{(i,j)}(\lfloor nt \rfloor)}{n},
			\end{equation}where, for every $j \in \mathbb{N}$, $\boldsymbol{Z}_{\mathrm{surv}}^{(i,j)}(\lfloor nt \rfloor)$ denotes an independent copy of $\boldsymbol{Z}(\lfloor nt \rfloor)$, started from $\boldsymbol{e}_{i}$ and conditioned on survival, while the $(B^{(i,j)} (\lfloor nt \rfloor))_{j \in \mathbb{N}}$ are i.i.d.\ Bernoulli random variables, independent of the $\boldsymbol{Z}_{\mathrm{surv}}^{(i,j)}(\lfloor nt \rfloor)$ and with parameter \begin{equation} p_{i, n,t} = \mathbb{P}_{\boldsymbol{e}_{i}}(\Omega_{\mathrm{surv}}(\lfloor nt \rfloor)) = \frac{u_{i}}{ Q[\boldsymbol{u}] \lfloor nt \rfloor} + \mathscr{o}\Big(\frac{1}{n}\Big). \label{eqdefpni} 
			\end{equation}In words, we first ask each initial individual whether it has surviving descendants at time $\lfloor nt \rfloor$. For those that do, we then apply \eqref{eqentrancelawP}, in the form \[ \frac{\boldsymbol{Z}_{\mathrm{surv}}^{(i,j)}(\lfloor nt \rfloor)}{n} \Rightarrow W_{t}^{(i,j)} \boldsymbol{v}  \quad \text{as } \, n \to \infty,  \] where the $(W_{t}^{(i,j)})_{i \in [d], j \in \mathbb{N}}$ denote independent copies of $W_{t}$, independently of $i$ and $j$. Since also \[  \lfloor n c_{i} \rfloor p_{i, n, t} = \frac{\lfloor n c_{i} \rfloor}{n} n p_{i, n, t}   \underset{n \to \infty}{\longrightarrow} \frac{u_{i}c_{i}}{Q[\boldsymbol{u}]t},  \] the independence of $\big( \boldsymbol{Z}_{\mathrm{surv}}^{(i,j)}(\lfloor nt \rfloor) \big)_{i \in [d],j \in \mathbb{N}}$ and  $\big(B^{(i,j)} (\lfloor nt \rfloor)\big)_{i, \in [d], j \in \mathbb{N}}$ yields the Poisson limit for every $i$ on the right-hand side of \eqref{Bernoulli}, so, under $\mathbb{P}_{\lfloor n \boldsymbol{c} \rfloor}$, \[  \frac{\boldsymbol{Z}(\lfloor nt \rfloor)}{n} \Rightarrow \sum_{i=1}^{d} \sum_{\ell =1}^{K_{t}^{(i)}} W_{t}^{(i, \ell)} \boldsymbol{v} \quad \text{as } \, n \to \infty,			\] where $K_{t}^{(i)} \sim \poi(u_{i}c_{i}/(Q[\boldsymbol{u}]t))$. In particular, for $K_{t} \sim \poi(\langle \boldsymbol{u}, \boldsymbol{c}\rangle/(Q[\boldsymbol{u}]t))$ and $(W_{t}^{(k)})_{k \in \mathbb{N}}$ a collection of independent copies of $W_{t}$, we get, under $\mathbb{P}_{\lfloor n \boldsymbol{c} \rfloor }$, \[  \frac{\boldsymbol{Z}(\lfloor nt \rfloor)}{n} 
			\Rightarrow\sum_{k =1}^{K_{t}} W_{t}^{(k)} \boldsymbol{v} = S_{K_{t},t}  \boldsymbol{v}, 
			\] which is statement \eqref{eqtransitionP} and provides the interpretation of $K_t$ announced in Remark \ref{remmeaningKt}. \\
			
			We now move on to the limiting transition probabilities under $\widehat{\mathbb{P}}$. Again, for $f: \mathbb{R}_{+}^{d} \rightarrow \mathbb{R}$ such that $\boldsymbol{x} \mapsto \langle \boldsymbol{u}, \boldsymbol{x}\rangle f(\boldsymbol{x})$ is bounded and continuous,  \begin{align*}
				\widehat{\mathbb{E}}_{\lfloor n\boldsymbol{c} \rfloor}\bigg[  f\Big(  \frac{\boldsymbol{Z}(\lfloor nt \rfloor)}{n} \Big)  \bigg] &= \frac{1}{\langle \boldsymbol{u}, \lfloor n\boldsymbol{c} \rfloor \rangle} \mathbb{E}_{\lfloor n\boldsymbol{c} \rfloor} \bigg[  \big\langle \boldsymbol{u}, \boldsymbol{Z}(\lfloor nt \rfloor) \big\rangle \ f\Big(  \frac{\boldsymbol{Z}(\lfloor nt \rfloor)}{n}   \Big)    \bigg] \\
				&=  \frac{n}{\langle \boldsymbol{u}, \lfloor n\boldsymbol{c} \rfloor\rangle} \mathbb{E}_{\lfloor n\boldsymbol{c} \rfloor} \bigg[ \Big\langle \boldsymbol{u}, \frac{\boldsymbol{Z}(\lfloor nt \rfloor)}{n} \Big\rangle \ f \Big(  \frac{\boldsymbol{Z}(\lfloor nt \rfloor)}{n} \Big)   \bigg],
			\end{align*} where, in the first step, we again used the definition of $\widehat{\mathbb{P}}$. Taking $n \to \infty$ this yields by \eqref{eqtransitionP} that 
			\begin{equation}\label{Ehatnc}
			\lim_{n \to \infty} \widehat{\mathbb{E}}_{\lfloor n\boldsymbol{c} \rfloor}\bigg[  f\Big(  \frac{\boldsymbol{Z}(\lfloor nt \rfloor)}{n} \Big)  \bigg]  = \frac{\langle \boldsymbol{u}, \boldsymbol{v} \rangle}{\langle \boldsymbol{u}, \boldsymbol{c} \rangle} \ \mathbb{E} [ S_{K_{t},t} f(S_{K_{t},t} \boldsymbol{v})   ] = \frac{ \mathbb{E} [ S_{K_{t},t} f(S_{K_{t},t} \boldsymbol{v}) ]}{\langle \boldsymbol{u}, \boldsymbol{c} \rangle},   
			\end{equation}
			where, in the last step, we again used $\langle \boldsymbol{u}, \boldsymbol{v} \rangle = 1$. On the other hand, due to Wald's identity, \begin{align*}
				\mathbb{E} [  S_{K_{t},t}  ]= \mathbb{E}(K_t) \mathbb{E}(W_t^{(1)}) =  \mathbb{E}[ K_{t}] Q[\boldsymbol{u}]t= \langle \boldsymbol{u}, \boldsymbol{c} \rangle.
			\end{align*} So again, since, in particular, $\boldsymbol{x} \mapsto \langle \boldsymbol{u}, \boldsymbol{x}\rangle e^{- \langle \boldsymbol{\lambda}, \boldsymbol{x} \rangle}$, $\boldsymbol{x}, \boldsymbol{\lambda} \in \mathbb{R}_+^d$, is bounded and continuous, this, together with \eqref{Ehatnc}, already implies that the stationary measure under $\widehat{\mathbb{P}}$ is the size-biased distribution of $S_{K_{t}, t}$ times $\boldsymbol{v}$. This proves \eqref{eqtransitionhatP}. \\
			
			Last, we determine the transition density of $(S_{K_{t},t})^{*}$. To this end, let $K_{t}^{*}$ and $(W_{t}^{(1)})^{*}$ denote random variables with the size-biased distribution of $K_{t}$ and $W_{t}^{(1)}$, respectively. Then according to \cite[Prop.~2.1]{arratia2019size}, we have \[ (S_{K_{t},t})^{*} = S_{K_{t}^{*}-1, t} + (W_{t}^{(1)})^{*} \quad \text{and} \quad K_{t}^{*} \overset{\text{d}}{=} K_{t}+1.  \]
			
			Since, due to \eqref{EB4}, $(W_{t}^{(1)})^{*} \overset{\text{d}}{=} W_{t}^{(k)} + W_{t}^{(j)}$ for arbitrary $k$ and $j$, this implies 
			\begin{equation}\label{SKtt}
			(S_{K_{t},t})^{*} \overset{\text{d}}{=} S_{K_{t}, t}+(W_{t}^{(1)})^{*}  \overset{\text{d}}{=} \sum_{k=1}^{K_{t}+2} W_{t}^{(k)}  \overset{\text{d}}{=} S_{K_{t}+2, t}.  
			\end{equation}
				Now, let $\boldsymbol{c}= x \boldsymbol{v}$	for some $x>0$. Then, by the definition of $K_t$, $K_{t} = K_{x,t} \sim \poi(x/(Q[\boldsymbol{u}]t))$,  and therefore
				\begin{equation} \pi_{x,k,t}\coloneq \mathbb{P}(K_{x,t}=k)= \Big( \frac{x}{ Q[\boldsymbol{u}]t}  \Big)^{k} \frac{1}{k!} e^{- \frac{x}{Q[\boldsymbol{u}]t}}. \label{eqpi}
				\end{equation} 
		        On the other hand, by the definition of $S_{K_{t},t}$, \begin{equation} S_{K_{x,t},t} \ \vert \ \{ K_{x,t}=k \} \sim \Gamma\Big(k,\frac{1}{Q[\boldsymbol{u}]t} \Big) \quad\text{with density} \quad  f_{k,t}(y) = \frac{1}{(k-1)! (Q[\boldsymbol{u}]t)^{k}} \ y^{k-1} \ e^{- \frac{y}{Q[\boldsymbol{u}]t}}. \label{eqfkt}
				\end{equation} Then, combining \eqref{SKtt}--\eqref{eqfkt} gives the density of $(S_{K_{t},t})^{*}$ as \begin{align*}
					p_{t}(x,y) &= \sum_{k=0}^{\infty} \pi_{x,k,t} f_{k+2,t}(y) = \sum_{k=0}^{\infty} \Big(  \frac{x}{Q[\boldsymbol{u}]t}  \Big)^{k} \ \frac{1}{k!} \ e^{- \frac{x}{Q[\boldsymbol{u}]t}} \cdot \frac{1}{(k+1)! (Q[\boldsymbol{u}]t)^{k+2}} \ y^{k+1} \ e^{- \frac{ y}{Q[\boldsymbol{u}]t}} \\
					&= e^{- \frac{x+y}{Q[\boldsymbol{u}]t}} \  \frac{1}{Q[\boldsymbol{u}]t} \ \frac{\sqrt{y}}{\sqrt{x}} \ \sum_{k=0}^{\infty} \frac{1}{k!(k+1)!} \ \bigg( \frac{\sqrt{x} \sqrt{y}}{Q[\boldsymbol{u}]t}  \bigg)^{2k+1}, 
				\end{align*} as claimed. \qedhere
			\end{proof}
			In particular, Propositions~\ref{propentrancelaw} and \ref{proptransitionprob} together yield
			\begin{corollary}\label{corfddconv}
				Let $\boldsymbol{Z}(0)= \boldsymbol{z} \in \mathbb{N}_{0}^{d}\setminus \{ \boldsymbol{0}\}$ and $T>0$. Then we have convergence of all finite-dimensional distributions of the processes $\big(\boldsymbol{Z}(\lfloor nt \rfloor)/(nQ[\boldsymbol{u}])\big)_{t \in [0,T]}$ and  $\big(\langle \boldsymbol{u}, \boldsymbol{Z}(\lfloor nt \rfloor)\rangle/(nQ[\boldsymbol{u}])\big)_{t \in [0,T]}$ to the finite-dimensional distributions of $\boldsymbol{Y}$ and $\langle \boldsymbol{u}, \boldsymbol{Y}\rangle$, respectively, under $\widehat{\mathbb{P}}_{\boldsymbol{z}}$. 
			\end{corollary}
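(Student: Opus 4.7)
The plan is to establish joint convergence of the vector-valued process by combining the entrance law (Proposition~\ref{propentrancelaw}) with the limiting transition probabilities (Proposition~\ref{proptransitionprob}) via the Markov property of $(\boldsymbol{Z}(n))_{n\geq 0}$ under $\widehat{\mathbb{P}}$, which is preserved under Doob's $h$-transform.

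First, I would reduce the vector statement to its one-dimensional projection on $\boldsymbol{u}$. Propositions~\ref{propentrancelaw} and~\ref{proptransitionprob} produce limits concentrated on the ray $\mathbb{R}_{+}\boldsymbol{v}$, so at every fixed $t \in (0,T]$ the residual $\boldsymbol{Z}(\lfloor nt\rfloor)/n - \bigl(\langle\boldsymbol{u},\boldsymbol{Z}(\lfloor nt\rfloor)\rangle/n\bigr)\boldsymbol{v}$ vanishes in $\widehat{\mathbb{P}}$-probability. Therefore, once joint convergence of the scalar projections $\langle \boldsymbol{u}, \boldsymbol{Z}(\lfloor nt_{i}\rfloor)\rangle/(nQ[\boldsymbol{u}])$ is known, a Slutsky-type argument delivers the joint vector statement for $\boldsymbol{Z}(\lfloor nt_{i}\rfloor)/(nQ[\boldsymbol{u}])$ as well.

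For the joint convergence at times $0 < t_{1} < \cdots < t_{k} \leq T$, I would proceed by induction on $k$, with Proposition~\ref{propentrancelaw} supplying the base case $k=1$. In the inductive step, conditioning on $\mathcal{F}_{\lfloor nt_{k-1}\rfloor}$ and invoking the Markov property of $\boldsymbol{Z}$ under $\widehat{\mathbb{P}}$, the expectation of a product of bounded continuous test functions $f_{1}, \dots, f_{k}$ factors as
\[
\widehat{\mathbb{E}}_{\boldsymbol{z}}\bigg[\prod_{i=1}^{k-1} f_{i}\Big(\tfrac{\boldsymbol{Z}(\lfloor nt_{i}\rfloor)}{nQ[\boldsymbol{u}]}\Big) \, \Phi_{n}\big(\boldsymbol{Z}(\lfloor nt_{k-1}\rfloor)\big)\bigg],
\]
where $\Phi_{n}(\boldsymbol{w}) \deq \widehat{\mathbb{E}}_{\boldsymbol{w}}\bigl[f_{k}\bigl(\boldsymbol{Z}(\lfloor nt_{k}\rfloor - \lfloor nt_{k-1}\rfloor)/(nQ[\boldsymbol{u}])\bigr)\bigr]$. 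Applying Proposition~\ref{proptransitionprob} with $\boldsymbol{c} = x\boldsymbol{v}$ identifies the pointwise limit $\Phi_{n}(\lfloor nx\boldsymbol{v}\rfloor) \to \Phi(x) \deq \int p_{t_{k}-t_{k-1}}(x, y)\, f_{k}(y\boldsymbol{v}/Q[\boldsymbol{u}])\,\mathrm{d}y$. Combined with the induction hypothesis applied to the first $k-1$ factors, this identifies the limit as the $k$-time distribution of a Markov process with transition kernel $p_{t}$, which by Proposition~\ref{proptransitionprob} coincides (up to the $Q[\boldsymbol{u}]$-rescaling already absorbed into the normalisation of $\boldsymbol{Y}^{(n)}$) with that of the $4$-dimensional squared Bessel process; this matches the fdd of $\boldsymbol{Y}$.

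The main obstacle is the limit interchange in the inductive step, since $\boldsymbol{Z}(\lfloor nt_{k-1}\rfloor)$ is a random starting state rather than the deterministic $\lfloor n\boldsymbol{c}\rfloor$ required by Proposition~\ref{proptransitionprob}. To address this, I would exploit that under $\widehat{\mathbb{P}}$ the variable $\boldsymbol{Z}(\lfloor nt_{k-1}\rfloor)/n$ concentrates along $\mathbb{R}_{+}\boldsymbol{v}$ (so that it is asymptotically of the form $\lfloor nx\boldsymbol{v}\rfloor$ with $x$ converging in distribution to $W_{t_{k-1}}^{*}$), together with the continuity of $x \mapsto \Phi(x)$ visible from the explicit density $p_{t}$, and then conclude via bounded convergence.
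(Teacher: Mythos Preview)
Your proposal is correct and is exactly the standard argument the paper has in mind when it records the corollary as an immediate consequence of Propositions~\ref{propentrancelaw} and~\ref{proptransitionprob} without further detail. The technical point you flag about random versus deterministic starting states is real but minor: the proof of Proposition~\ref{proptransitionprob} goes through unchanged if $\boldsymbol{c}$ is replaced by a sequence $\boldsymbol{c}_n\to\boldsymbol{c}$ (the Poisson parameter $\langle\boldsymbol{u},\boldsymbol{c}\rangle/(Q[\boldsymbol{u}]t)$ and the number of Bernoulli summands depend continuously on $\boldsymbol{c}$), which is precisely the continuous-convergence statement needed to close your induction.
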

			
			
			\subsection{Tightness}\label{sectightness}
			Throughout this section, let $\boldsymbol{Z}(0)= \boldsymbol{z}$ for some $\boldsymbol{z} \in \mathbb{N}_{0}^{d}\setminus \{ \boldsymbol{0}\}$ independent of $n$. Let $(\boldsymbol{w}_{1}, \ldots, \boldsymbol{w}_{d-1} )$ be linearly independent and orthogonal to $\boldsymbol{v}$. Since $\langle \boldsymbol{u},\boldsymbol{v}\rangle=1$, the set $\{\boldsymbol{u}, \boldsymbol{w}_{1}, \ldots, \boldsymbol{w}_{d-1}  \}$ is  linearly independent and forms a basis of $\mathbb{R}^{d}$. We shall decompose $\boldsymbol{Z}(\lfloor nt \rfloor)/n$ in this basis. It then suffices to prove that $( (\langle \boldsymbol{u}, \boldsymbol{Z}(\lfloor nt \rfloor)\rangle/n)_{t \in [0,T]}   )_{n \in \mathbb{N}_{0}}$ and $( (\langle \boldsymbol{w}_{\ell}, \boldsymbol{Z}(\lfloor nt \rfloor)\rangle/n)_{t \in [0,T]}   )_{n \in \mathbb{N}_{0}}$, $\ell \in [d-1]$, are tight. 
			
			\begin{proposition}[Tightness I]
				\label{proptightnessu}
				The sequence of processes \[ \bigg( \Big(\frac{\langle \boldsymbol{u}, \boldsymbol{Z}(\lfloor nt \rfloor)\rangle}{n} \Big)_{t \in [0,T]}   \bigg)_{n \in \mathbb{N}_{0}} \quad\text{is tight in $\mathcal{D}[0,T]$ under $\widehat{\mathbb{P}}$.}\] 
			\end{proposition}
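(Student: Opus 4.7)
The plan is to verify the two parts of Aldous's tightness criterion for the process $M_n(t) \coloneq \langle \boldsymbol{u}, \boldsymbol{Z}(\lfloor nt\rfloor)\rangle/n$ under $\widehat{\mathbb{P}}_{\boldsymbol{z}}$: a uniform-in-$n$ sup-norm bound (compact containment), together with a vanishing-oscillation estimate at stopping times $\tau_n + \theta_n$ with $\theta_n \to 0$. The structural observation driving both parts is that $M(k) \coloneq \langle \boldsymbol{u}, \boldsymbol{Z}(k)\rangle$ is a nonnegative $\mathbb{P}$-martingale (by the harmonicity of $h$ verified in Section~\ref{secP}), and therefore a nonnegative $\widehat{\mathbb{P}}$-submartingale, since Doob's transform identity gives
\[
\widehat{\mathbb{E}}[M(k+1)\mid\mathcal{F}_k] = \frac{\mathbb{E}[M(k+1)^{2}\mid\mathcal{F}_k]}{M(k)} \geq M(k).
\]

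For compact containment, I would apply Doob's maximal inequality to the submartingale $M$ under $\widehat{\mathbb{P}}_{\boldsymbol{z}}$, obtaining
\[
\widehat{\mathbb{P}}_{\boldsymbol{z}}\Bigl[\max_{k \leq nT} M(k) \geq Kn\Bigr] \leq \frac{\widehat{\mathbb{E}}_{\boldsymbol{z}}[M(nT)]}{Kn} = \frac{\mathbb{E}_{\boldsymbol{z}}[M(nT)^{2}]}{Kn\,h(\boldsymbol{z})}.
\]
Using orthogonality of $\mathbb{P}$-martingale increments, the second moment expands as $h(\boldsymbol{z})^{2} + \sum_{k=0}^{nT-1}\sum_i \sigma_i^{2} (\boldsymbol{z}\mathbf{M}^{k})_i$ with $\sigma_i^{2} \coloneq \mathrm{Var}_{\boldsymbol{e}_i}(M(1))$; since $(\boldsymbol{z}\mathbf{M}^{k})_i$ is bounded by a constant multiple of $h(\boldsymbol{z})$ uniformly in $k$ (by Perron--Frobenius applied to the irreducible critical $\mathbf{M}$), one obtains $\mathbb{E}_{\boldsymbol{z}}[M(nT)^{2}] = O(n)$, so the tail probability is $O(1/K)$ and compact containment holds.

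For the oscillation condition, the Markov property of $\widehat{\mathbb{P}}$ (inherited from the Doob transform) reduces the task, for stopping times $\tau_n$ taking values in $n^{-1}\mathbb{N}_0$, to controlling $\widehat{\mathbb{E}}_{\boldsymbol{y}}[(M(m) - h(\boldsymbol{y}))^{2}]/n^{2}$ uniformly over starting points $\boldsymbol{y}$ with $h(\boldsymbol{y}) \leq Kn$ (the event supplied by Step~1) and $m = \lfloor n\theta\rfloor$. A direct expansion using the $\mathbb{P}$-martingale property $\mathbb{E}_{\boldsymbol{y}}[\Delta_m] = 0$, with $\Delta_m \coloneq M(m) - h(\boldsymbol{y})$, yields the clean identity
\[
\widehat{\mathbb{E}}_{\boldsymbol{y}}\bigl[\Delta_m^{2}\bigr] = \mathbb{E}_{\boldsymbol{y}}[\Delta_m^{2}] + \frac{\mathbb{E}_{\boldsymbol{y}}[\Delta_m^{3}]}{h(\boldsymbol{y})},
\]
reducing the $\widehat{\mathbb{P}}$-bound to second- and third-moment estimates for the $\mathbb{P}$-martingale increment. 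The second moment is the predictable compensator $\sum_{j<m}\sum_i \sigma_i^{2} (\boldsymbol{y}\mathbf{M}^{j})_i \leq Cm\,h(\boldsymbol{y})$, which after division by $n^{2}$ contributes $O(\theta)$ on $\{h(\boldsymbol{y}) \leq Kn\}$. The hard part is the third-moment term: I would bound it by Rosenthal's inequality for martingale differences which, combined with the finite-third-moment hypothesis on the offspring distributions, gives $|\mathbb{E}_{\boldsymbol{y}}[\Delta_m^{3}]| \leq C\bigl((m\,h(\boldsymbol{y}))^{3/2} + m\,h(\boldsymbol{y})\bigr)$; after dividing by $n^{2} h(\boldsymbol{y})$ and substituting $h(\boldsymbol{y}) \leq Kn$, this is $O(\theta^{3/2})$, which vanishes as $\theta \to 0$. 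This third-moment term is precisely where the finite-third-moment assumption in Theorem~\ref{theobessel} enters, and is the delicate step of the argument.
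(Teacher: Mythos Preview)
Your approach is valid but takes a different route from the paper. The paper uses Billingsley's moment criterion (\cite[Thm.~13.5]{billingsley2013convergence}), bounding the two-sided oscillation probability $\widehat{\mathbb{P}}(|\eta^{(n)}(t)-\eta^{(n)}(s)|\geq\lambda,\;|\eta^{(n)}(s)-\eta^{(n)}(r)|\geq\lambda)$ by unwrapping the definition of $\widehat{\mathbb{P}}$ and repeatedly applying the second- and third-moment bounds of Lemmas~\ref{lemVWFDD9} and~\ref{lemVWthirdmoments}. Your Aldous-criterion route is arguably more conceptual: the observation that $H$ is a $\widehat{\mathbb{P}}$-submartingale gives compact containment in one line via Doob's maximal inequality, and the identity $\widehat{\mathbb{E}}_{\boldsymbol{y}}[\Delta_m^{2}]=\mathbb{E}_{\boldsymbol{y}}[\Delta_m^{2}]+\mathbb{E}_{\boldsymbol{y}}[\Delta_m^{3}]/h(\boldsymbol{y})$ cleanly reduces the oscillation estimate under $\widehat{\mathbb{P}}$ to the same $\mathbb{P}$-moment inputs. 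Both proofs ultimately hinge on the finite-third-moment assumption at exactly the same place.

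One step to tighten: Rosenthal's inequality, as you invoke it, controls $\mathbb{E}_{\boldsymbol{y}}[|\Delta_m|^{3}]$ via $\mathbb{E}_{\boldsymbol{y}}[\langle\Delta\rangle_m^{3/2}]$, but the predictable bracket $\langle\Delta\rangle_m\leq C\sum_{k<m}H(k)$ is random and its $3/2$-moment is not $(m\,h(\boldsymbol{y}))^{3/2}$ --- Jensen goes the wrong way. The cleanest fix is to bound the \emph{signed} third moment directly via the expansion $\mathbb{E}_{\boldsymbol{y}}[\Delta_m^{3}]=\sum_{k}\mathbb{E}_{\boldsymbol{y}}\bigl[\mathbb{E}[X_k^{3}\mid\mathcal{F}_{k-1}]\bigr]+3\sum_{k}\mathbb{E}_{\boldsymbol{y}}[\Delta_{k-1}V_k]$: the first sum is $O(m\,h(\boldsymbol{y}))$ by additivity of third cumulants, and for the second one writes $V_k=aH(k-1)+\langle\boldsymbol{w},\boldsymbol{Z}(k-1)\rangle$ with $\langle\boldsymbol{w},\boldsymbol{v}\rangle=0$, so that $\mathbb{E}[\Delta_{k-1}H(k-1)]=\mathbb{E}[\Delta_{k-1}^{2}]\leq Ckh(\boldsymbol{y})$ while the orthogonal remainder is controlled as in Lemma~\ref{lemEwZ2}. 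This yields a bound of order $m^{2}h(\boldsymbol{y})$, which after division by $n^{2}h(\boldsymbol{y})$ is $O(\theta^{2})$ on $\{h(\boldsymbol{y})\leq Kn\}$ --- even better than your claimed $O(\theta^{3/2})$. This computation is essentially the content of the paper's Lemma~\ref{lemVWthirdmoments}.
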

			Before we turn to the proof, let us first establish two useful bounds:
			
			\begin{lemma}\label{lemVWFDD9}
				Let $H(n)\coloneq \langle \boldsymbol{u}, \boldsymbol{Z}(n) \rangle$. Then for some $C >0$, we have  \begin{equation*} \mathbb{E}\big[ (H(n)-H(0))^{2}   \big]  \leq CnH(0). 
				\end{equation*} 
			\end{lemma}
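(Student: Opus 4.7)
The plan is to exploit the fact that $H(n) = \langle \boldsymbol{u}, \boldsymbol{Z}(n) \rangle = h(\boldsymbol{Z}(n))$ is a martingale under $\mathbb{P}$, as was verified just after Definition~\ref{defhatP}. Writing $H(n) - H(0) = \sum_{k=0}^{n-1} (H(k+1) - H(k))$ as a telescoping sum of orthogonal martingale increments, we obtain
\begin{equation*}
\mathbb{E}\big[(H(n) - H(0))^{2}\big] = \sum_{k=0}^{n-1} \mathbb{E}\big[(H(k+1) - H(k))^{2}\big] = \sum_{k=0}^{n-1} \mathbb{E}\big[\mathrm{Var}(H(k+1)\mid \mathcal{F}_{k})\big],
\end{equation*}
so it suffices to bound the conditional variances by a constant multiple of $\mathbb{E}[H(k)] = H(0)$, the last equality again by the martingale property.

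Next I would compute $\mathrm{Var}(H(k+1)\mid\mathcal{F}_{k})$ explicitly using the Galton--Watson recursion \eqref{eqdefgaltonwatson}. We have
\begin{equation*}
H(k+1) = \sum_{\ell=1}^{d} \sum_{j=1}^{Z_{\ell}(k)} \big\langle \boldsymbol{u}, \boldsymbol{\xi}_{\ell}^{(j)}(k)\big\rangle,
\end{equation*}
where for each $\ell$ the random variables $\langle \boldsymbol{u}, \boldsymbol{\xi}_{\ell}^{(j)}(k)\rangle$ are i.i.d.\ with finite variance (which follows from the finite-second-moment assumption, and a fortiori from the finite-third-moment hypothesis of Theorem~\ref{theobessel}). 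Conditioning on $\mathcal{F}_{k}$, the sums over $j$ are independent across $\ell$, so
\begin{equation*}
\mathrm{Var}(H(k+1)\mid\mathcal{F}_{k}) = \sum_{\ell=1}^{d} Z_{\ell}(k)\,\mathrm{Var}\big(\langle \boldsymbol{u},\boldsymbol{\xi}_{\ell}\rangle\big) \leq C_{1} \sum_{\ell=1}^{d} Z_{\ell}(k)
\end{equation*}
for some constant $C_{1} > 0$ depending only on $\boldsymbol{u}$ and the second moments of the offspring distributions.

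Since $\boldsymbol{u}$ is strictly positive by Perron--Frobenius, setting $u_{\min} \deq \min_{i \in [d]} u_{i} > 0$ gives $\sum_{\ell=1}^{d} Z_{\ell}(k) \leq u_{\min}^{-1} H(k)$, hence $\mathrm{Var}(H(k+1)\mid \mathcal{F}_{k}) \leq (C_{1}/u_{\min}) H(k)$. Taking expectations and using the martingale identity $\mathbb{E}[H(k)] = H(0)$, we get $\mathbb{E}[(H(k+1)-H(k))^{2}] \leq C H(0)$ with $C \deq C_{1}/u_{\min}$. Summing over $k = 0, \ldots, n-1$ completes the proof. There is no real obstacle here; the only point to check carefully is the uniform boundedness of $\mathrm{Var}(\langle\boldsymbol{u},\boldsymbol{\xi}_{\ell}\rangle)$, which is automatic from the standing moment assumptions.
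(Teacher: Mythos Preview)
Your proof is correct and follows essentially the same route as the paper's: both reduce the claim to a one-step conditional variance bound $\mathrm{Var}(H(k+1)\mid\mathcal{F}_{k}) \leq C H(k)$, invoke the martingale property to get $\mathbb{E}[H(k)]=H(0)$, and then sum the orthogonal increments. The only cosmetic difference is that you compute the conditional variance directly as $\sum_{\ell} Z_{\ell}(k)\,\mathrm{Var}(\langle \boldsymbol{u},\boldsymbol{\xi}_{\ell}\rangle)$, whereas the paper first uses the cruder inequality $\mathbb{V}[\langle \boldsymbol{u},\boldsymbol{Z}(1)\rangle] \leq d\sum_{i} u_{i}^{2}\,\mathbb{V}[Z_{i}(1)]$ before bounding $\mathbb{V}[Z_{i}(1)]$ in terms of $\sum_{j} Z_{j}(0)$; your version is marginally sharper but the structure of the argument is identical.
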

			\begin{proof}
				Since $(\boldsymbol{Z}(n))_{n \in \mathbb{N}_{0}}$ is time homogeneous, by the branching property and additivity of (conditionally) independent variances, it suffices to show that \begin{equation} \mathbb{E} \big[ (H(1)-H(0))^{2}  \big] \leq C H(0) \quad\text{for every }H(0).    \label{VWTA}
				\end{equation} Indeed, if \eqref{VWTA} is true,  \[ \mathbb{E} \big[ (H(n)-H(n-1))^{2}  \big]  = \mathbb{E}\Big[ \mathbb{E} \big[ (H(n)-H(n-1))^{2} \ \big{\vert} \  H(n-1)  \big]     \Big] \leq \mathbb{E}\big[ C H(n-1) \big] = CH(0),     \] where in the last step, we have used the martingale property of $(H(n))_{n \in \mathbb{N}_{0}}$. \\
				
				To show \eqref{VWTA}, we notice that
				\begin{equation}\label{EdiffH2}
					\mathbb{E} \big[ (H(1)-H(0))^{2}    \big] = \mathbb{V} [ H(1) ] = \mathbb{V} [ \langle \boldsymbol{u}, \boldsymbol{Z}(1) \rangle] \leq d \sum_{i=1}^{d} \mathbb{V}[  u_{i}Z_{i}(1) ] = d \sum_{i=1}^{d} u_{i}^{2} \mathbb{V} [  Z_{i}(1) ],
				\end{equation} and since every $Z_{i}(1)$ is a sum of independent random variables (see \eqref{eqdefgaltonwatson}), we have, for some $\widetilde{C}>0$, that \[ \mathbb{V}[  Z_{i}(1)] \leq \tilde{C} \sum_{j=1}^{d} Z_{j}(0)   \leq \frac{\widetilde{C}}{\min_{j} u_{j}} \langle \boldsymbol{u}, \boldsymbol{Z}(0) \rangle, \qquad i \in [d].   \] Together with \eqref{EdiffH2}, this implies that \[ \mathbb{E}\big[ (H(1)-H(0))^{2}  \big] \leq \frac{d \widetilde{C} \sum_{i=1}^{d}u_{j}^{2}}{\min_{j}u_{i}} \ H(0). \qedhere \]
				
			\end{proof}
			
			\begin{lemma}\label{lemVWthirdmoments}
				Assume that the third moments of the offspring distribution are finite. Then we have \[ \limsup_{n \to \infty} \frac{\mathbb{E}\big[ H(n)^{3}  \big]}{n^{2}} < \infty. \]
			\end{lemma}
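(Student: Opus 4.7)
The plan is to set up a one-step recursion for $M_n \coloneq \mathbb{E}[H(n)^3]$ by writing $H(n) = H(n-1) + D(n)$ with $D(n) \coloneq H(n)-H(n-1)$, expanding the cube, and taking conditional expectation given $\mathcal{F}_{n-1}$. Since $(H(n))_{n \in \mathbb{N}_0}$ is a $\mathbb{P}$-martingale, the linear term $3H(n-1)^2 \mathbb{E}[D(n)\mid\mathcal{F}_{n-1}]$ vanishes, leaving
\[
\mathbb{E}[H(n)^3 \mid \mathcal{F}_{n-1}] \;=\; H(n-1)^3 \;+\; 3 H(n-1)\,\mathbb{E}[D(n)^2 \mid \mathcal{F}_{n-1}] \;+\; \mathbb{E}[D(n)^3 \mid \mathcal{F}_{n-1}].
\]
The second term on the right is handled by the one-step computation already carried out in the proof of Lemma~\ref{lemVWFDD9}: the conditional variance satisfies $\mathbb{E}[D(n)^2 \mid \mathcal{F}_{n-1}] \leq C\,H(n-1)$ for some constant $C>0$.

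The heart of the argument, and the main obstacle, is the conditional third moment of $D(n)$. Write
\[
D(n) \;=\; \sum_{i=1}^{d} \sum_{k=1}^{Z_i(n-1)} Y_{i,k}, \qquad Y_{i,k} \coloneq \langle \boldsymbol{u}, \boldsymbol{\xi}_i^{(k)}(n-1)\rangle - u_i,
\]
a sum of $\mathcal{F}_{n-1}$-conditionally independent, mean-zero random variables whose conditional laws depend only on the type $i$. Under the finite-third-moment assumption on the offspring distribution, $\mathbb{E}[|Y_{i,k}|^3]$ is uniformly bounded by some constant $C'$. Because all cross-terms in the expansion of $D(n)^3$ involving at least one solitary $Y_{i,k}$ vanish by independence and centering, only the diagonal survives, and hence
\[
\bigl|\mathbb{E}[D(n)^3 \mid \mathcal{F}_{n-1}]\bigr| \;=\; \Bigl|\sum_{i,k} \mathbb{E}[Y_{i,k}^3]\Bigr| \;\leq\; C' \sum_{i=1}^{d} Z_i(n-1) \;\leq\; \frac{C'}{\min_j u_j}\,H(n-1).
\]

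Combining these bounds and taking expectations gives a clean linear recursion
\[
M_n \;\leq\; M_{n-1} \;+\; 3C\,\mathbb{E}[H(n-1)^2] \;+\; C''\,\mathbb{E}[H(n-1)],
\]
where $C''$ absorbs the previous constants. By the martingale property $\mathbb{E}[H(n-1)] = H(0)$, and from Lemma~\ref{lemVWFDD9} together with $\mathbb{E}[H(n-1)]^2 = H(0)^2$ one gets $\mathbb{E}[H(n-1)^2] \leq C n H(0) + H(0)^2 = O(n)$. Summing the recursion from $1$ to $n$ then yields $M_n = O(n^2)$, which is exactly the claim. The only real work is the conditional third-moment identification in the display above; everything else is bookkeeping that leans on Lemma~\ref{lemVWFDD9} and the martingale property.
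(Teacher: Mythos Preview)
Your proof is correct and follows essentially the same route as the paper: expand $H(n)^3=(H(n-1)+D(n))^3$, kill the linear cross-term by the martingale property, bound $H(n-1)\,\mathbb{E}[D(n)^2\mid\mathcal{F}_{n-1}]$ via Lemma~\ref{lemVWFDD9}, reduce $\mathbb{E}[D(n)^3\mid\mathcal{F}_{n-1}]$ to the diagonal sum $\sum_{i,k}\mathbb{E}[Y_{i,k}^3]\leq C'' H(n-1)$ using independence and centering, and then iterate. The only cosmetic difference is that the paper first iterates and then bounds $\sum_k \mathbb{E}[D(k)^3]$, whereas you bound the conditional third moment before summing; the content is the same.
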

			\begin{proof}
				To compute the third moment, we first notice that \begin{align*} \mathbb{E} \big[  H(n)^{3}   \big] =& \mathbb{E} \big[  H(n-1)^{3}  \big] + 3 \mathbb{E}\big[ H(n-1)^{2}(H(n)-H(n-1))  \big] \\ 
					&+ 3 \mathbb{E}\big[ H(n-1)(H(n)-H(n-1))^{2}   \big] + \mathbb{E}\big[ (H(n)-H(n-1))^{3}    \big]. 
				\end{align*} By the martingale property, \[ \mathbb{E} \big[ H(n-1)^{2}(H(n)-H(n-1))     \big] =0.  \] Moreover, using Lemma~\ref{lemVWFDD9} in the penultimate step,	we get for some $C_{1}, C_{2}>0$ \begin{align}
					\mathbb{E}\big[ H(n-1) (H(n)-H(n-1))^{2}  \big] &= \mathbb{E} \Big[ H(n-1) \mathbb{E}\big[ (H(n)-H(n-1))^{2} \ \big{\vert} \  H(n-1)    \big]    \Big] \nonumber \\
					&\leq  C \mathbb{E}\big[ H(n-1)^{2}  \big] \leq C_{1} \big[ (n-1)H(0)+H(0)^{2}   \big] \leq C_{2} (n-1). \label{eq2ZeileS10}
				\end{align} Therefore, for some $C_{3}>0$,
					\begin{align*}
						\mathbb{E} \big[ H(n)^{3}  \big] &\leq \mathbb{E} \big[ H(n-1)^{3}  \big] + 3 C_{2} (n-1) +  \mathbb{E}\big[ (H(n)-H(n-1))^{3}   \big].
					\end{align*} Iterating this inequality, we get
					\begin{align*}
						\mathbb{E} \big[ H(n)^{3}  \big]	&\leq H(0)^{3}+ C_{3}n^{2} + \sum_{k=1}^{n} \mathbb{E}\big[ (H(k)-H(k-1))^{3}  \big].
				\end{align*} Thus, it suffices to show that \begin{equation} \limsup_{n \to \infty} \frac{1}{n^{2}} \sum_{k=1}^{n} \mathbb{E}\big[ (H(k)-H(k-1))^{3}  \big] < \infty.  \label{eqlimdifH} 
				\end{equation}
				To estimate $\mathbb{E}\big[ (H(k)-H(k-1))^{3}  \big]$, one can again use  the fact that we have a sum of independent random variables with zero expectation. Explicitly, recalling \eqref{eqdefgaltonwatson},
				\begin{align*}
					H(1)= \sum_{i=1}^{d} u_{i}Z_{i}(1) = \sum_{i=1}^{d} u_{i} \sum_{\ell=1}^{d}  \sum_{j=1}^{Z_{\ell}(0)}  \xi_{\ell,i}^{(j)} = \sum_{\ell=1}^{d}  \sum_{j=1}^{Z_{\ell}(0)} \zeta_{j}^{(\ell)},
				\end{align*} where 
				\[
				 \zeta_{j}^{(\ell)} \coloneq  \sum_{i=1}^{d} u_{i}  \xi_{\ell,i}^{(j)};
				\] note that the upper index now refers to the types rather than the individuals, and the $(\zeta_{j}^{(\ell)})$ are independent. Since $H(0)= \mathbb{E}[ H(1)]$, this indeed yields \[ H(1)-H(0) = \sum_{i=1}^{d} \sum_{j=1}^{Z_{i}(0)} \Big(\zeta_{j}^{(i)} - \mathbb{E}\big[ \zeta_{j}^{(i)} \big] \Big),   \] so that, if we now set $\mathring{\zeta}_{j}^{(i)}= \zeta_{j}^{(i)} - \mathbb{E}\big[ \zeta_{j}^{(i)}  \big]$,  then for some $C, C' >0$ \begin{align*}
					\mathbb{E}\big[ (H(1)-H(0))^{3}  \big] &= \mathbb{E}\Bigg[ \bigg( \sum_{i=1}^{d}\sum_{j=1}^{Z_{i}(0)} \mathring{\zeta}_{j}^{(i)} \bigg)^{3}  \Bigg] =  \sum_{i=1}^{d} \sum_{j=1}^{Z_{i}(0)} \mathbb{E} \Big[ \big(\mathring{\zeta}_{j}^{(i)} \big)^{3}  \Big]   \leq C \sum_{i=1}^{d}Z_{i}(0) \leq C' H(0),
				\end{align*} where the penultimate step uses the third-moment assumption. Therefore, we have  \[ \limsup_{n \to \infty} \frac{1}{n^{2}} \sum_{k=1}^{n} \mathbb{E}\big[ (H(k)-H(k-1))^{3}  \big] \leq \limsup_{n \to \infty} \frac{C' H(0)}{n} < \infty,   \] from which the statement follows via \eqref{eqlimdifH}. \qedhere
			\end{proof}
			
			\begin{proof}[Proof of Proposition~\ref{proptightnessu}]
				Without loss of generality, we may assume $T=1$. Let $ \eta^{(n)}(t)\coloneq H(\lfloor nt \rfloor)/n = \langle \boldsymbol{u}, \boldsymbol{Z}(\lfloor nt \rfloor)\rangle/n$, $t \in [0,1]$. According to \cite[Thm.~13.5]{billingsley2013convergence}, it suffices to show that, for $r \leq s \leq t$ and $\lambda>0$, \begin{equation} \widehat{\mathbb{P}}_{\boldsymbol{z}}(|\eta^{(n)}(t)-\eta^{(n)}(s)| \geq \lambda, \ |\eta^{(n)}(s)-\eta^{(n)}(r)| \geq \lambda ) \leq \frac{(t-r)^{2\alpha}}{\lambda^{4\beta}} \label{eqtoshowtightness} 
				\end{equation} for some $\alpha > 1/2$ and $\beta >0$. Note that, if $t-r < 1/n$, either $t$ and $s$ or $s$ and $r$ fall into the same interval $[(k-1)/n, k/n )$ for some $k \in \mathbb{N}$, and the left hand side of \eqref{eqtoshowtightness} vanishes. Therefore, we may assume for the rest of the proof that $t-r \geq 1/n$. In this case, we have, in particular, \begin{equation}
					\frac{\lfloor nt \rfloor-\lfloor nr \rfloor}{n} \leq 2(t-r). \label{eqgaus}
				\end{equation}
				
				It is immediate from the definition of $\eta^{(n)}$ that \begin{align}
					\widehat{\mathbb{P}}_{\boldsymbol{z}} (|\eta^{(n)}(t)-&\eta^{(n)}(s)| \geq \lambda, \ |\eta^{(n)}(s)-\eta^{(n)}(r)| \geq \lambda )\nonumber \\
					&= \widehat{\mathbb{P}}_{\boldsymbol{z}}(|H(\lfloor nt \rfloor)-H(\lfloor ns \rfloor)| \geq \lambda n, \ |H(\lfloor ns \rfloor)-H(\lfloor nr \rfloor)| \geq \lambda n ) \nonumber \\
					&= \frac{1}{\langle \boldsymbol{u}, \boldsymbol{z}\rangle} \mathbb{E}_{\boldsymbol{z}} \big[ H(\lfloor nt \rfloor)  \mathds{1}_{\{|H(\lfloor nt \rfloor)-H(\lfloor ns \rfloor)| \geq \lambda n \} } \mathds{1}_{\{|H(\lfloor ns \rfloor)-H(\lfloor nr \rfloor)| \geq \lambda n \} }  \big]. \nonumber
				\end{align} We will now split this expectation into two parts $E_{1}$ and $E_{2}$ by adding and subtracting $H(\lfloor ns \rfloor)$, that is,
				\begin{align}
					\widehat{\mathbb{P}}_{\boldsymbol{z}} (|\eta^{(n)}(t)-&\eta^{(n)}(s)| \geq \lambda, \ |\eta^{(n)}(s)-\eta^{(n)}(r)| \geq \lambda )	=  \frac{1}{\langle \boldsymbol{u}, \boldsymbol{z}\rangle} (E_{1}+E_{2}), \label{VWFDD8}
				\end{align} where \begin{align*}
					E_{1}\coloneq& \mathbb{E}_{\boldsymbol{z}} \big[ (H(\lfloor nt \rfloor)-H(\lfloor ns \rfloor)) \mathds{1}_{\{|H(\lfloor nt \rfloor)-H(\lfloor ns \rfloor)| \geq \lambda n \} } \mathds{1}_{\{|H(\lfloor ns \rfloor)-H(\lfloor nr \rfloor)| \geq \lambda n \} }  \big] \\
					\text{and} \quad E_{2}\coloneq& \mathbb{E}_{\boldsymbol{z}} \big[ H(\lfloor ns \rfloor) \mathds{1}_{\{|H(\lfloor nt \rfloor)-H(\lfloor ns \rfloor)| \geq \lambda n \} } \mathds{1}_{\{|H(\lfloor ns \rfloor)-H(\lfloor nr \rfloor)| \geq \lambda n \} }  \big].
				\end{align*}
				
				For $E_{1}$, we first find that, by Markov's inequality, \begin{align*}
					E_{1} &\leq \frac{1}{\lambda n} \mathbb{E}_{\boldsymbol{z}} \big[ (H(\lfloor nt \rfloor)-H(\lfloor ns \rfloor))^{2} \mathds{1}_{\{|H(\lfloor ns \rfloor)-H(\lfloor nr \rfloor)| \geq \lambda n \} }  \big] \\
					&= \frac{1}{\lambda n} \mathbb{E}_{\boldsymbol{z}} \Big[\mathbb{E}_{\boldsymbol{z}} \big[ (H(\lfloor nt \rfloor)-H(\lfloor ns \rfloor))^{2}  \ \big{\vert} \   \mathcal{F}_{s} \big]  \mathds{1}_{\{|H(\lfloor ns \rfloor)-H(\lfloor nr \rfloor)| \geq \lambda n \} }  \Big],
				\end{align*} so that Lemma~\ref{lemVWFDD9} and \eqref{eqgaus} imply that for some $c_{1}>0$ \begin{align*}
					E_{1} \leq& \frac{2c_{1} (t-s)}{\lambda}  \mathbb{E}_{\boldsymbol{z}} \big[ H(\lfloor ns \rfloor) \mathds{1}_{\{|H(\lfloor ns \rfloor)-H(\lfloor nr \rfloor)| \geq \lambda n \} }  \big].
				\end{align*} Splitting this expectation again, this time by adding and subtracting $H(\lfloor nr \rfloor)$, gives
				\begin{align*}
					E_{1} \leq& \frac{2c_{1} (t-s)}{\lambda}  \mathbb{E}_{\boldsymbol{z}} \big[ (H(\lfloor ns \rfloor)-H(\lfloor nr \rfloor))  \mathds{1}_{\{|H(\lfloor ns \rfloor)-H(\lfloor nr \rfloor)| \geq \lambda n \} }  \big] \\
					&\qquad + \frac{2c_{1} (t-s)}{\lambda}  \mathbb{E}_{\boldsymbol{z}} \big[ H(\lfloor nr \rfloor)  \mathds{1}_{\{|H(\lfloor ns \rfloor)-H(\lfloor nr \rfloor)| \geq \lambda n \} }  \big] \\
					\leq& \frac{2c_{1} (t-s)}{\lambda^{2}n}  \mathbb{E}_{\boldsymbol{z}} \big[ (H(\lfloor ns \rfloor)-H(\lfloor nr \rfloor))^{2} \big] +\frac{2c_{1} (t-s)}{\lambda^{2}n}  \mathbb{E}_{\boldsymbol{z}} \big[ |H(\lfloor ns \rfloor)-H(\lfloor nr \rfloor)| \, H(\lfloor nr \rfloor)  \big].
				\end{align*} Using again Lemma~\ref{lemVWFDD9}, \eqref{eqgaus}, and, this time, also Jensen's inequality, we get for some $\widetilde{C}_{1}>0$ \begin{align*}
					E_{1} &\leq \frac{\widetilde{C}_{1}(t-s)(s-r)}{\lambda^{2}} \mathbb{E}_{\boldsymbol{z}} \big[ H(\lfloor nr \rfloor)  \big] + \frac{2c_{1}(t-s)}{\lambda^{2}n} \mathbb{E}_{\boldsymbol{z}} \Big[ \sqrt{\mathbb{E} \big[ (H(\lfloor ns \rfloor)-H(\lfloor nr \rfloor))^{2}  \ \big{\vert} \   \mathcal{F}_{r}   \big]}  H(\lfloor nr \rfloor)    \Big] \\
					&\leq \frac{\widetilde{C}_{1}(t-r)^{2}}{\lambda^{2}} \langle \boldsymbol{u}, \boldsymbol{z} \rangle + \frac{\widetilde{C}_{1} (t-r)^{\frac{3}{2}}}{\lambda^{2}}  \frac{1}{\sqrt{n}} \mathbb{E}_{\boldsymbol{z}} \big[ H(\lfloor nr \rfloor)^{\frac{3}{2}}    \big].
				\end{align*} Noting next that, for some $\widehat{C}_{1}, \bar{C}_{1}(\boldsymbol{z})>0$, \begin{align*}
					\frac{1}{\sqrt{n}} \mathbb{E}_{\boldsymbol{z}} \big[ H(\lfloor nr \rfloor)^{\frac{3}{2}}    \big] &= \frac{\mathbb{P}_{\boldsymbol{z}}(\boldsymbol{Z}(\lfloor nr \rfloor)>0)}{\sqrt{n}} \ (\lfloor nr \rfloor)^{\frac{3}{2}} \ \mathbb{E}_{\dg{\boldsymbol{z}}}\bigg[ \Big( \frac{H(\lfloor nr \rfloor)}{\lfloor nr \rfloor}  \Big)^{\frac{3}{2}} \  \bigg{\vert} \ \Omega_{\mathrm{surv}}(\lfloor nr \rfloor) \bigg] \\
					&\leq \widehat{C}_{1} r^{\frac{1}{2}} \boldsymbol{z} \bigg( \mathbb{E}_{\boldsymbol{z}} \bigg[ \Big( \frac{H(\lfloor nr \rfloor)}{\lfloor nr \rfloor}  \Big)^{2}  \ \bigg{\vert} \  \Omega_{\mathrm{surv}}(\lfloor nr \rfloor)   \bigg] \bigg)^{\frac{3}{4}} \leq \bar{C}_{1}(\boldsymbol{z}),
				\end{align*} we end up at  \begin{equation} E_{1} \leq C_{1}(\boldsymbol{z}) \frac{(t-r)^{\frac{3}{2}}}{\lambda^{2}} \quad \text{for some } C_1(\boldsymbol{z})>0. \label{VWFDD10}
				\end{equation}
				
				We now turn to $E_{2}$. Similarly as for $E_{1}$, we first find \begin{align*}
					E_{2} &\leq \frac{1}{\lambda^{2}n^{2}} \mathbb{E}_{\boldsymbol{z}} \big[ (H(\lfloor nt \rfloor)-H(\lfloor ns \rfloor))^{2} H(\lfloor ns \rfloor)  \mathds{1}_{\{ |H(\lfloor ns \rfloor)-H(\lfloor nr \rfloor)|\geq \lambda n \}}   \big] \\
					&= \frac{1}{\lambda^{2}n^{2}} \mathbb{E}_{\boldsymbol{z}} \Big[ \mathbb{E}_{\boldsymbol{z}} \big[(H(\lfloor nt \rfloor)-H(\lfloor ns \rfloor))^{2} \ \big{\vert} \  \mathcal{F}_{s} \big] H(\lfloor ns \rfloor)  \mathds{1}_{\{ |H(\lfloor ns \rfloor)-H(\lfloor nr \rfloor)|\geq \lambda n \}}   \Big],
				\end{align*} so that, again, by Lemma~\ref{lemVWFDD9} and \eqref{eqgaus}, we have for some $c_{2}>0$ \begin{align*}
					E_{2}&\leq \frac{c_{2}(t-s)}{\lambda^{2}n} \mathbb{E}_{\boldsymbol{z}} \big[ H(\lfloor ns \rfloor)^{2}  \mathds{1}_{\{ |H(\lfloor ns \rfloor)-H(\lfloor nr \rfloor)|\geq \lambda n \}}   \big].
				\end{align*} This time, splitting the expectation by adding and subtracting $H(\lfloor nr \rfloor)$ yields, for some $\widetilde{C}_{2}>0$,
				\begin{align*}
					E_{2} &\leq \frac{2c_{2}(t-s)}{\lambda^{2}n} \mathbb{E}_{\boldsymbol{z}} \big[ (H(\lfloor ns \rfloor)-H(\lfloor nr \rfloor))^{2} \big] + \frac{2c_{2}(t-s)}{\lambda^{2}n} \mathbb{E}_{\boldsymbol{z}} \big[ H(\lfloor nr \rfloor)^{2}  \mathds{1}_{\{ |H(\lfloor ns \rfloor)-H(\lfloor nr \rfloor)|\geq \lambda n \}}   \big] \\
					&\leq \frac{\widetilde{C}_{2}(t-s)(s-r)}{\lambda^{2}} + \frac{\widetilde{C}_{2}(t-s)}{\lambda^{2}n} \mathbb{E}_{\boldsymbol{z}} \big[ H(\lfloor nr \rfloor)^{2} \mathds{1}_{\{ |H(\lfloor ns \rfloor)-H(\lfloor nr \rfloor)|\geq \lambda n \}}   \big], 
				\end{align*} where, once more by Lemma~\ref{lemVWFDD9} and \eqref{eqgaus}, for some $\widehat{C}_{2}>0$, \begin{align*}
					\mathbb{E}_{\boldsymbol{z}} \big[ H(\lfloor nr \rfloor)^{2} \mathds{1}_{\{ |H(\lfloor ns \rfloor)-H(\lfloor nr \rfloor)|\geq \lambda n \}}   \big] &\leq \frac{1}{\lambda^{2}n^{2}} \mathbb{E}_{\boldsymbol{z}} \big[ H(\lfloor nr \rfloor)^{2} (H(\lfloor ns \rfloor)-H(\lfloor nr \rfloor))^{2} \big] \\
					&\leq \frac{\widehat{C}_{2}(s-r)}{\lambda^{2}n} \mathbb{E}_{\boldsymbol{z}}\big[ H(\lfloor nr \rfloor)^{3}  \big].
				\end{align*} But since $\mathbb{E}_{\boldsymbol{z}}\big[ H(\lfloor nr \rfloor)^{3}  \big]/n^{2}$ is bounded according to Lemma~\ref{lemVWthirdmoments}, we already find, for some $\tilde{c}_{2}, C_{2}(\boldsymbol{z}) >0$, that \begin{align}
					E_{2} \leq \frac{\tilde{c}_{2} (t-r)^{2}}{\lambda^{2}}+ \frac{\tilde{c}_{2} (t-r)^{2}}{\lambda^{4}} \frac{\mathbb{E}_{\boldsymbol{z}} \big[ H(\lfloor nr \rfloor)^{3}  \big]}{n^{2}} \leq C_{2}(\boldsymbol{z}) (t-r)^{2} \Big( \frac{1}{\lambda^{2}} + \frac{1}{\lambda^{4}}    \Big) \label{VWFDD11}.
				\end{align} Combining \eqref{VWFDD10} and \eqref{VWFDD11}, we finally conclude that, for some $C>0$, \[  \widehat{\mathbb{P}}_{\boldsymbol{z}}(|\eta^{(n)}(t)-\eta^{(n)}(s)| \geq \lambda, \ |\eta^{(n)}(s)-\eta^{(n)}(r)| \geq \lambda ) \leq \frac{C(t-r)^{\frac{3}{2}}}{\lambda^{4}}   \text{, for all } \lambda \in (0,1). \qedhere \]
			\end{proof}
			
			\begin{proposition}[Tightness II]
				\label{proptightnessw}
				For every $\ell \in [d-1]$, the sequence of processes \[ \bigg( \Big(\frac{\langle \boldsymbol{w}_{\ell}, \boldsymbol{Z}(\lfloor nt \rfloor)\rangle}{n} \Big)_{t \in [0,T]}   \bigg)_{n \in \mathbb{N}_{0}} \quad\text{is tight in $\mathcal{D}[0,T]$ under $\widehat{\mathbb{P}}$.} \]
			\end{proposition}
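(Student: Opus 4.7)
Plan: The aim is to show the stronger statement that, under $\widehat{\mathbb{P}}_{\boldsymbol{z}}$,
\[
\max_{k \leq \lfloor nT\rfloor}\frac{|\langle\boldsymbol{w}_\ell, \boldsymbol{Z}(k)\rangle|}{n} \xrightarrow{n\to\infty} 0 \quad \text{in probability;}
\]
this entails tightness in $\mathcal{D}[0,T]$ with degenerate limit $0$, which is all that is required. Set $H_\ell(n) \coloneq \langle\boldsymbol{w}_\ell,\boldsymbol{Z}(n)\rangle$. The crucial input is the Perron--Frobenius decomposition $\mathbf{M}^k = \boldsymbol{u}\boldsymbol{v} + \mathbf{R}^k$, where $\|\mathbf{R}^k\|\leq C\rho^k$ for some $\rho<1$. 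Since $\langle\boldsymbol{v},\boldsymbol{w}_\ell\rangle = 0$, this gives $\mathbf{M}^k\boldsymbol{w}_\ell = \mathbf{R}^k\boldsymbol{w}_\ell$, which decays geometrically in $k$, in sharp contrast with $\mathbf{M}^k\boldsymbol{u} = \boldsymbol{u}$ (the mechanism driving the linear growth of $H$).

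The first step is to establish, under $\mathbb{P}$, analogues of Lemmas~\ref{lemVWFDD9} and~\ref{lemVWthirdmoments} for $H_\ell$ that exploit this geometric decay. For each $N \in \mathbb{N}$, the process $M^{(N)}(k) \coloneq \boldsymbol{Z}(k)\mathbf{M}^{N-k}\boldsymbol{w}_\ell$, $k=0,\dots,N$, is a $\mathbb{P}$-martingale with $M^{(N)}(N) = H_\ell(N)$, and arguing exactly as in Lemma~\ref{lemVWFDD9} its conditional increment variances are bounded by $C\rho^{2(N-k-1)}H(k)$. Summing and using $\mathbb{E}_{\boldsymbol{z}}[H(k)] = H(0)$ yields
\[
\mathbb{E}_{\boldsymbol{z}}\big[H_\ell(N)^2\big] \leq (\boldsymbol{z}\mathbf{M}^N\boldsymbol{w}_\ell)^2 + C H(0)\sum_{k=0}^{N-1}\rho^{2(N-k-1)} = O(1) \quad \text{uniformly in } N.
\]
A Burkholder--Davis--Gundy argument applied to the same martingale, together with the estimate $\mathbb{E}_{\boldsymbol{z}}[H(k)H(k')] = \mathbb{E}_{\boldsymbol{z}}[H(\min(k,k'))^2] \leq C(1+\min(k,k'))$ from Lemma~\ref{lemVWFDD9}, gives
\[
\mathbb{E}_{\boldsymbol{z}}\big[H_\ell(N)^4\big] \leq C\,\mathbb{E}_{\boldsymbol{z}}\bigg[\Big(\sum_{k=0}^{N-1}\rho^{2(N-k-1)}H(k)\Big)^{\!2}\bigg] = O(N).
\]

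A union bound combined with Markov's inequality then controls the probability under $\mathbb{P}$:
\[
\mathbb{P}_{\boldsymbol{z}}\Big(\max_{k \leq \lfloor nT\rfloor}|H_\ell(k)| > \epsilon n\Big) \leq \sum_{k=0}^{\lfloor nT\rfloor}\frac{\mathbb{E}_{\boldsymbol{z}}[H_\ell(k)^4]}{\epsilon^4 n^4} = O(n^{-2}).
\]
Transferring to $\widehat{\mathbb{P}}$ via Definition~\ref{defhatP} and Cauchy--Schwarz,
\[
\widehat{\mathbb{P}}_{\boldsymbol{z}}\Big(\max_{k \leq \lfloor nT\rfloor}|H_\ell(k)| > \epsilon n\Big) = \frac{\mathbb{E}_{\boldsymbol{z}}\big[H(\lfloor nT\rfloor)\mathds{1}_{\{\cdots\}}\big]}{\langle\boldsymbol{u},\boldsymbol{z}\rangle} \leq \frac{\sqrt{\mathbb{E}_{\boldsymbol{z}}[H(\lfloor nT\rfloor)^2]}\sqrt{\mathbb{P}_{\boldsymbol{z}}(\cdots)}}{\langle\boldsymbol{u},\boldsymbol{z}\rangle} = O(n^{-1/2}) \to 0,
\]
where the $O(\sqrt{n})$ factor arises from $\mathbb{E}_{\boldsymbol{z}}[H(\lfloor nT\rfloor)^2] = O(n)$ by Lemma~\ref{lemVWFDD9}.

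The main technical step is the $L^4$ estimate $\mathbb{E}_{\boldsymbol{z}}[H_\ell(N)^4] = O(N)$ via Burkholder--Davis--Gundy applied to the family $\{M^{(N)}\}$, with careful tracking of the interaction between the geometric factor $\rho^{2(N-k-1)}$ and the second-moment estimates on $H$; pleasantly, this uses only the second-moment information already needed for Lemma~\ref{lemVWFDD9}, so no strengthening of the moment hypotheses is required. An alternative route would imitate Proposition~\ref{proptightnessu} and verify Billingsley's criterion~\eqref{eqtoshowtightness} directly, but the shortcut via convergence to the constant $0$ process is much shorter and is available here precisely because the orthogonality $\langle\boldsymbol{v},\boldsymbol{w}_\ell\rangle = 0$ turns $H_\ell$ into a bounded (rather than linearly growing) process under $\mathbb{P}$.
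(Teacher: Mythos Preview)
Your overall strategy coincides with the paper's: both reduce tightness to
\[
\widehat{\mathbb{P}}_{\boldsymbol{z}}\Big(\max_{k\leq \lfloor nT\rfloor}|\langle\boldsymbol{w}_\ell,\boldsymbol{Z}(k)\rangle|>\varepsilon n\Big)\xrightarrow{n\to\infty}0,
\]
and both exploit the Perron--Frobenius decay $\mathbf{M}^k\boldsymbol{w}_\ell=\mathbf{R}^k\boldsymbol{w}_\ell=\mathscr{O}(\rho^k)$. Your martingale $M^{(N)}(k)=\boldsymbol{Z}(k)\mathbf{M}^{N-k}\boldsymbol{w}_\ell$ and the resulting bound $\mathbb{E}_{\boldsymbol{z}}[H_\ell(N)^2]=\mathscr{O}(1)$ are correct; this is precisely the content of the paper's Lemma~\ref{lemEwZ2}.

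There is, however, a real gap in the fourth-moment step. Burkholder--Davis--Gundy for a discrete martingale with $p=4$ controls $\mathbb{E}[M_N^4]$ by $\mathbb{E}\big[[M]_N^2\big]$ with the \emph{optional} quadratic variation $[M]_N=\sum_k(\Delta M_k)^2$, not by the predictable one $\langle M\rangle_N$. The displayed inequality
\[
\mathbb{E}_{\boldsymbol{z}}\big[H_\ell(N)^4\big]\leq C\,\mathbb{E}_{\boldsymbol{z}}\Big[\Big(\sum_k\rho^{2(N-k-1)}H(k)\Big)^2\Big]
\]
would follow if $[M^{(N)}]_N\leq\langle M^{(N)}\rangle_N$ pathwise, which is false. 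The correct Burkholder--Rosenthal form for $p>2$ carries an extra term $\sum_k\mathbb{E}[(\Delta M_k)^4]$, and since $\Delta M_k$ is a linear functional of the offspring in generation $k$, this forces a \emph{fourth-moment} assumption on the offspring distribution (already $\mathbb{E}[H_\ell(1)^4]$ need not be finite under second moments). Your claim that ``only the second-moment information already needed for Lemma~\ref{lemVWFDD9}'' suffices is therefore incorrect, and even the paper's standing third-moment hypothesis in Theorem~\ref{theobessel} does not rescue the argument.

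The paper avoids this by a different passage from $\mathbb{P}$ to $\widehat{\mathbb{P}}$: instead of Cauchy--Schwarz it truncates on $\{\langle\boldsymbol{u},\boldsymbol{Z}(n)\rangle\leq An\}$, splits the time range at $\delta n$, uses the pointwise domination $|\langle\boldsymbol{w}_\ell,\boldsymbol{Z}(k)\rangle|\leq\gamma\langle\boldsymbol{u},\boldsymbol{Z}(k)\rangle$ together with Doob's $L^2$ maximal inequality for the early range, and for $k\in(\delta n,n]$ invokes the Athreya--Ney central limit theorem for $\langle\boldsymbol{w}_\ell,\boldsymbol{Z}(k)\rangle/\sqrt{k}$ plus Lemma~\ref{lemEwZ2} to obtain $\mathbb{P}_{\boldsymbol{z}}(|H_\ell(k)|>\varepsilon n)=\mathscr{o}(n^{-2})$ via a uniform-integrability argument. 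This is more delicate than your route but works under second moments alone. If you are willing to assume fourth moments, your argument (with the Burkholder--Rosenthal correction term handled) is shorter and self-contained; under the paper's hypotheses, though, you need to replace the $L^4$ step by something closer to the paper's splitting.
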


			Again, before we prove the result, we first need the following:
			
			\begin{lemma} \label{lemEwZ2}
				For every $\ell \in [d-1]$, we have \[ \lim_{n \to \infty} \mathbb{E}_{\boldsymbol{z}} \big[ \langle \boldsymbol{w}_{\ell}, \boldsymbol{Z}(n) \rangle^{2}   \big] < \infty.   \]
			\end{lemma}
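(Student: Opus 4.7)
The plan is to set up a recursion for $N_n(\boldsymbol{w}) \coloneq \mathbb{E}_{\boldsymbol{z}}\big[\langle \boldsymbol{w}, \boldsymbol{Z}(n)\rangle^2\big]$, iterate it, and then exploit the fact that $\boldsymbol{w}_\ell \perp \boldsymbol{v}$ to produce geometric decay of $\mathbf{M}^k\boldsymbol{w}_\ell$ via Perron--Frobenius. Concretely, using the branching structure \eqref{eqdefgaltonwatson} and the conditional independence of offspring vectors, one obtains the one-step identity
\[
\mathbb{E}\big[\langle \boldsymbol{w}, \boldsymbol{Z}(n+1)\rangle^2 \,\big|\, \mathcal{F}_n\big] \;=\; \langle \mathbf{M}\boldsymbol{w}, \boldsymbol{Z}(n)\rangle^2 \;+\; \big\langle \boldsymbol{\sigma}^2(\boldsymbol{w}),\, \boldsymbol{Z}(n)\big\rangle,
\]
where $\boldsymbol{\sigma}^2(\boldsymbol{w}) \coloneq (\sigma_i^2(\boldsymbol{w}))_{i\in[d]}$ with $\sigma_i^2(\boldsymbol{w}) = \mathrm{Var}\big(\sum_k w_k\,\xi_{i,k}\big)$ a quadratic form bounded by $C\|\boldsymbol{w}\|^2$ (the second moments involved are those already built into $Q[\cdot]$). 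Taking expectations and iterating yields
\[
N_n(\boldsymbol{w}_\ell) \;=\; \langle \mathbf{M}^n\boldsymbol{w}_\ell, \boldsymbol{z}\rangle^2 \;+\; \sum_{k=0}^{n-1}\big\langle \boldsymbol{\sigma}^2(\mathbf{M}^k\boldsymbol{w}_\ell),\,\mathbb{E}_{\boldsymbol{z}}[\boldsymbol{Z}(n-1-k)]\big\rangle.
\]

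The decisive ingredient is the geometric decay of $\mathbf{M}^k\boldsymbol{w}_\ell$. Since $\boldsymbol{w}_\ell$ is orthogonal to the left Perron eigenvector $\boldsymbol{v}$, biorthogonality of the left and right eigenbases forces the $\boldsymbol{u}$-component of $\boldsymbol{w}_\ell$ in the spectral expansion of $\mathbf{M}$ to vanish; hence $\mathbf{M}^k\boldsymbol{w}_\ell$ lives entirely in the invariant subspace associated with the sub-dominant eigenvalues, and there exist constants $C', \rho \in (0,1)$ with $\|\mathbf{M}^k\boldsymbol{w}_\ell\| \leq C'\rho^k$. Combined with the uniform boundedness of $\mathbb{E}_{\boldsymbol{z}}[\boldsymbol{Z}(k)]$, which follows from criticality (Perron eigenvalue $1$, simple, and all other unit-modulus eigenvalues simple for an irreducible nonnegative matrix), and the quadratic estimate on $\boldsymbol{\sigma}^2$, the $k$-sum is dominated by a convergent geometric series; this yields $\limsup_n N_n(\boldsymbol{w}_\ell) < \infty$, and in fact convergence of the limit via dominated convergence once $\mathbb{E}_{\boldsymbol{z}}[\boldsymbol{Z}(k)]$ is known to converge.

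The hard part is the spectral-gap statement $\|\mathbf{M}^k\boldsymbol{w}_\ell\| \leq C'\rho^k$. It requires $\mathbf{M}$ to have spectral radius strictly less than $1$ when restricted to $\boldsymbol{v}^\perp$, which is the standard Perron--Frobenius consequence of primitivity of $\mathbf{M}$ (implicitly assumed throughout the paper, as the Bessel-process limit of Theorem~\ref{theobessel} also requires it). For a merely irreducible, periodic $\mathbf{M}$, a non-Perron eigenvalue of modulus $1$ would reside in $\boldsymbol{v}^\perp$ and push $N_n(\boldsymbol{w}_\ell)$ to grow linearly in $n$, invalidating the claim.
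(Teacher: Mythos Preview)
Your proof is correct and follows essentially the same route as the paper's: both iterate the second-moment recursion (the paper via Harris's matrix formula for $\mathbf{D}_{\boldsymbol{z}}(n)$, you via the equivalent scalar version) and then invoke the Perron--Frobenius decomposition $\mathbf{M}^n = \boldsymbol{u}\boldsymbol{v}^T + \mathcal{O}(\alpha^n)$ together with $\langle\boldsymbol{v},\boldsymbol{w}_\ell\rangle = 0$ to obtain $\|\mathbf{M}^n\boldsymbol{w}_\ell\| = \mathcal{O}(\alpha^n)$. Your primitivity worry is moot here, since the paper's standing assumption $m_{i,j}>0$ for all $i,j$ makes $\mathbf{M}$ strictly positive, and the ``biorthogonality of eigenbases'' detour is unnecessary---the rank-one projector $\boldsymbol{u}\boldsymbol{v}^T$ annihilates $\boldsymbol{w}_\ell$ directly, with no diagonalizability assumption needed.
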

			\begin{proof}	
				Let \[ \mathbf{D}_{\boldsymbol{z}} (n) \coloneq (\mathbb{E}_{\boldsymbol{z}} [Z_{i}(n)Z_{j}(n)] )_{i,j \in [d]} \quad \text{and} \quad  \mathbf{C}_{\boldsymbol{e}_{i}}( \boldsymbol{Z}(1)) = ( \mathbb{E}_{\boldsymbol{e}_{i}}[Z_{j}(1)Z_{k}(1)]- \mathbb{E}_{\boldsymbol{e}_{i}}[Z_{j}(1)]\mathbb{E}_{\boldsymbol{e}_{i}}[Z_{k}(1)]   )_{j,k \in [d]}.    \] Then, according to \cite[Chap.~II,~(4.3)]{harris1963theory},  we have \begin{equation} \mathbf{D}_{\boldsymbol{z}} (n)= (\mathbf{M}^{T})^{n} \mathbf{D}_{\boldsymbol{z}}(0) \mathbf{M}^{n} + \sum_{j=1}^{n} (\mathbf{M}^{T})^{n-j} \bigg( \sum_{i=1}^{d} \mathbf{C}_{\boldsymbol{e}_{i}}( \boldsymbol{Z}(1)) \mathbb{E}_{\boldsymbol{z}}[Z_{i}(j-1)]   \bigg) \mathbf{M}^{n-j}.  \label{SMDz(n)}   
				\end{equation}  When we also note that $\mathbb{E}_{\boldsymbol{z}}[\langle \boldsymbol{w}_{\ell}, \boldsymbol{Z}(n)\rangle^{2}] = \boldsymbol{w}_{\ell}^{T} \mathbf{D}_{\boldsymbol{z}}(n) \boldsymbol{w}_{\ell}$, \eqref{SMDz(n)} yields \begin{align} &\mathbb{E}_{\boldsymbol{z}}[\langle \boldsymbol{w}_{\ell}, \boldsymbol{Z}(n)\rangle^{2}] \nonumber \\
					&\qquad \qquad = (\mathbf{M}^{n} \boldsymbol{w}_{\ell})^{T} \mathbf{D}_{\boldsymbol{z}}(0) \mathbf{M}^{n} \boldsymbol{w}_{\ell} + \sum_{j=1}^{n} (\mathbf{M}^{n-j} \boldsymbol{w}_{\ell})^{T} \bigg( \sum_{i=1}^{d} \mathbf{C}_{\boldsymbol{e}_{i}}( \boldsymbol{Z}(1)) \mathbb{E}_{\boldsymbol{z}}[Z_{i}(j-1)]   \bigg) \mathbf{M}^{n-j} \boldsymbol{w}_{\ell}. \label{eqlem410}
				\end{align}   But according to Perron--Frobemus theory (see, for example, \cite[Thm.~II.5.1]{harris1963theory} or \cite[App.~2,~Thm.~2.3]{karlintaylor1975}), \[ \mathbf{M}^{n}= \mathbf{M}_{1}+ \mathbf{M}_{2}^{n}  \] with $\mathbf{M}_{1}= (u_{i}v_{j})_{i,j}$ and $\| \mathbf{M}_{2}^{n} \| \in \mathscr{O}(\alpha^{n})$, for some $\mathbf{M}_{2}$ and $\alpha$ with $0< \alpha <1$. In particular
				\[ \mathbf{M}_{1} \boldsymbol{w}_{\ell}= \sum_{i=1}^{d} u_{i} \langle \boldsymbol{v}, \boldsymbol{w}_{\ell} \rangle =0 \quad \text{and} \quad \| \mathbf{M}_{2}^{n} \boldsymbol{w}_{\ell} \| \leq \| \mathbf{M}_{2}^{n} \| \| \boldsymbol{w}_{\ell} \| = \mathscr{O}(\alpha^{n}).      \] Applying this to all terms in \eqref{eqlem410} proves the claim.
			\end{proof}

			\begin{proof}[Proof of Proposition~\ref{proptightnessw}]
				
				We may again assume $T=1$. The tightness of $((\langle \boldsymbol{w}_{\ell}, \boldsymbol{Z}(\lfloor nt \rfloor) \rangle/n)_{t \in [0,1]})_{n \in \mathbb{N}_{0}}$ will follow from the sufficient criterion \[ \widehat{\mathbb{P}}_{\boldsymbol{z}} \Big(  \max_{k \leq n} | \langle \boldsymbol{w}_{\ell}, \boldsymbol{Z}(k) \rangle | > \varepsilon n   \Big) \underset{n \to \infty}{\longrightarrow} 0 \qquad \text{for every } \varepsilon>0.  \] 
				
				Fix some $A>0$. By the definition of $\widehat{\mathbb{P}}$, \begin{align}
					\widehat{\mathbb{P}}_{\boldsymbol{z}} \Big(  \max_{k \leq n} | \langle \boldsymbol{w}_{\ell}, \boldsymbol{Z}(k) \rangle | > \varepsilon n   \Big)  =& \frac{1}{\langle \boldsymbol{u}, \boldsymbol{z} \rangle} \mathbb{E}_{\boldsymbol{z}} \big[  \langle \boldsymbol{u}, \boldsymbol{Z}(n) \rangle \mathds{1}_{\{ \max_{k \leq n} | \langle \boldsymbol{w}_{\ell}, \boldsymbol{Z}(k) \rangle | > \varepsilon n   \}}   \big] \nonumber \\
					\leq& \frac{An}{\langle \boldsymbol{u}, \boldsymbol{z} \rangle} \mathbb{P}_{\boldsymbol{z}} \Big( \max_{k \leq n} | \langle \boldsymbol{w}_{\ell}, \boldsymbol{Z}(k) \rangle | > \varepsilon n, \ \Omega_{\mathrm{surv}}(n)    \Big) \nonumber \\
					&+ \frac{1}{\langle \boldsymbol{u}, \boldsymbol{z} \rangle} \mathbb{E}_{\boldsymbol{z}} \big[ \langle \boldsymbol{u}, \boldsymbol{Z}(n) \rangle  \mathds{1}_{\{ \langle \boldsymbol{u}, \boldsymbol{Z}(n) \rangle > An  \}}    \big]. \label{VWT1}
				\end{align}
				
				By the Markov inequality and the second line in \eqref{eq2ZeileS10} (recalling $ \langle \boldsymbol{u}, \boldsymbol{Z}(n) \rangle = H(n)$), for some $C_{1}>0$ \begin{equation} \mathbb{E}_{\boldsymbol{z}}\big[ \langle \boldsymbol{u}, \boldsymbol{Z}(n) \rangle \mathds{1}_{\{ \langle \boldsymbol{u}, \boldsymbol{Z}(n) \rangle > An  \}}     \big] \leq \frac{1}{An} \mathbb{E}_{\boldsymbol{z}} \big[ \langle \boldsymbol{u}, \boldsymbol{Z}(n) \rangle^{2}   \big] \leq \frac{C_{1}n}{An} = \frac{C_{1}}{A}  \label{VWT2}
				\end{equation} (here and in what follows, all constants may depend on the starting state $\boldsymbol{z}$). 
				\\ 
				
				Fix also some $\delta \in (0,1)$. We now want to construct an upper bound for \[ \mathbb{P}_{\boldsymbol{z}} \Big(   \max_{k \leq \delta n} | \langle \boldsymbol{w}_{\ell}, \boldsymbol{Z}(k) \rangle | > \varepsilon n \Big) \] in terms of $\max_{k \leq \delta n}  \langle \boldsymbol{u}, \boldsymbol{Z}(k) \rangle$. In order to do so, we first recall that, due to Perron--Frobenius, both $\boldsymbol{u} > 0$ and $\boldsymbol{v} > 0$. On the one hand, this entails that there exists $\gamma_{1} >0$ such that \[ 0 < \gamma_{1} \| \boldsymbol{Z}(k) \| \leq \langle \boldsymbol{u}, \boldsymbol{Z}(k) \rangle \quad \text{for all } k \in \mathbb{N}_{0}.  \] On the other hand, since $\boldsymbol{w}_{\ell}$ is a basis vector, one has $\boldsymbol{w}_{\ell} \neq 0$, and so
				 $\langle \boldsymbol{w}_{\ell}, \boldsymbol{v} \rangle =0$ entails that $\boldsymbol{w}_{\ell}$ must have at least one strictly negative entry. Therefore, there is also a second constant $\gamma_{2}>0$ such that, on $\Omega_{\mathrm{surv}}(k)$, \[ 0 \leq | \langle \boldsymbol{w}_{\ell}, \boldsymbol{Z}(k) \rangle | \leq \gamma_{2} \| \boldsymbol{Z}(k) \| \quad \text{for all } k \in \mathbb{N}_{0}.    \] Consequently  \[ \frac{| \langle \boldsymbol{w}_{\ell}, \boldsymbol{Z}(k) \rangle |}{\langle \boldsymbol{u}, \boldsymbol{Z}(k) \rangle} \leq \frac{\gamma_{2}}{\gamma_{1}} =: \gamma,   \] so \[ | \langle \boldsymbol{w}_{\ell}, \boldsymbol{Z}(k) \rangle | \leq \gamma  \langle \boldsymbol{u}, \boldsymbol{Z}(k) \rangle  \quad \text{for all } k \in \mathbb{N}_{0}.    \] Therefore, for some $c(\boldsymbol{v}, \boldsymbol{u})>0$, 
				\[  \mathbb{P}_{\boldsymbol{z}} \Big(   \max_{k \leq \delta n} | \langle \boldsymbol{w}_{\ell}, \boldsymbol{Z}(k) \rangle | > \varepsilon n   \Big) \leq \mathbb{P}_{\boldsymbol{z}} \Big(  \max_{k \leq \delta n}  \langle \boldsymbol{u}, \boldsymbol{Z}(k) \rangle > c( \boldsymbol{v}, \boldsymbol{u}) \varepsilon n  \Big)  \leq \frac{\mathbb{E}_{\boldsymbol{z}} \big[  \max_{k \leq \delta n} \langle \boldsymbol{u}, \boldsymbol{Z}(k) \rangle^{2} \big]  }{ c (\boldsymbol{v}, \boldsymbol{u}) \varepsilon^{2}n^{2}}   .     \]

				Since $\langle \boldsymbol{u}, \boldsymbol{Z}(k) \rangle$ is a martingale,  Doob's $\mathcal{L}^{p}$ inequality tells us that, for some $C>0$,    \[ \mathbb{E}_{\boldsymbol{z}} \Big[  \max_{k \leq \delta n} \langle \boldsymbol{u}, \boldsymbol{Z}(k) \rangle^{2} \Big] \leq 4 \mathbb{E}_{\boldsymbol{z}} \big[ \langle \boldsymbol{u}, \boldsymbol{Z}(\delta n) \rangle^{2}    \big] \leq C \delta n,   \] where we also  used the second line in \eqref{eq2ZeileS10} for the last step. Consequently, for some $c_2(\boldsymbol{u},\boldsymbol{v})>0$, 
				\begin{equation}
					\mathbb{P}_{\boldsymbol{z}}\Big(  \max_{k \leq \delta n} | \langle \boldsymbol{w}_{\ell}, \boldsymbol{Z}(k) \rangle | > \varepsilon n  \Big) \leq \frac{c_{2}(\boldsymbol{u},\boldsymbol{v})\delta}{\varepsilon^{2}n}. \label{VWT3}
				\end{equation}
				
				Applying \eqref{VWT2} and \eqref{VWT3} to the corresponding terms on the right-hand side of \eqref{VWT1}, we get, for some $c_3(\boldsymbol{u},\boldsymbol{v})>0$, that \[ \widehat{\mathbb{P}}_{\boldsymbol{z}} \Big(  \max_{k \leq n} | \langle \boldsymbol{w}_{\ell}, \boldsymbol{Z}(k) \rangle | > \varepsilon n  \Big) \leq c_{3} \Big( \frac{1}{A}+\frac{\delta A}{\varepsilon^{2}}   \Big) + \frac{An}{\langle \boldsymbol{u}, \boldsymbol{z} \rangle} \mathbb{P}_{\boldsymbol{z}} \Big(  \max_{\delta n < k \leq n} | \langle \boldsymbol{w}_{\ell}, \boldsymbol{Z}(k) \rangle | > \varepsilon n, \ \Omega_{\mathrm{surv}}(n)  \Big).     \] Thus, it remains to show that \begin{equation} \lim_{n \to \infty} n \mathbb{P}_{\boldsymbol{z}} \Big(  \max_{\delta n < k \leq n} | \langle \boldsymbol{w}_{\ell}, \boldsymbol{Z}(k) \rangle | > \varepsilon n, \ \Omega_{\mathrm{surv}}(n)  \Big) = 0   \label{VWT4}  
				\end{equation} for all $\delta, A >0$. \\
				
				By Markov's inequality, for $k \leq n$, \begin{align*}
						\mathbb{P}_{\boldsymbol{z}}\big(  | \langle \boldsymbol{w}_{\ell}, \boldsymbol{Z}(k) \rangle | > \varepsilon n   \big) &\leq \frac{\mathbb{E}_{\boldsymbol{z}} \big[   \langle \boldsymbol{w}_{\ell}, \boldsymbol{Z}(k) \rangle^{2}  \mathds{1}_{\{  | \langle \boldsymbol{w}_{\ell}, \boldsymbol{Z}(k) \rangle | > \varepsilon n \}}      \big]  }{\varepsilon^{2}n^{2}} \\
						&\leq \frac{k}{\varepsilon^{2}n^{2}} \mathbb{E}_{\boldsymbol{z}}\bigg[ \Big(  \frac{\langle \boldsymbol{w}_{\ell}, \boldsymbol{Z}(k) \rangle}{\sqrt{k}}    \Big)^{2} \mathds{1}_{\{  | \frac{\langle \boldsymbol{w}_{\ell}, \boldsymbol{Z}(k) \rangle}{\sqrt{k}} | > \varepsilon \sqrt{k}  \}}  \mathds{1}_{ \Omega_{\mathrm{surv}}(k) }       \bigg] \\
						&\leq \frac{k \mathbb{P}_{\boldsymbol{z}}(\Omega_{\mathrm{surv}}(k))}{\varepsilon^{2}n^{2}}   \mathbb{E}_{\boldsymbol{z}}\bigg[ \Big(  \frac{\langle \boldsymbol{w}_{\ell}, \boldsymbol{Z}(k) \rangle}{\sqrt{k}}    \Big)^{2} \mathds{1}_{\{  | \frac{\langle \boldsymbol{w}_{\ell}, \boldsymbol{Z}(k) \rangle}{\sqrt{k}} | > \varepsilon \sqrt{k}  \}} \ \bigg{\vert} \  \Omega_{\mathrm{surv}}(k)      \bigg].
					\end{align*} According to \cite[Thm.~5.2]{athreya1974functionals}, \[\lim_{k \to \infty} \mathbb{P}_{\boldsymbol{z}}\Big( \frac{\langle \boldsymbol{w}_{\ell}, \boldsymbol{Z}(k) \rangle}{\sqrt{k}} \in \cdot \ \Big{\vert} \ \Omega_{\mathrm{surv}}(k) \Big) = \mu(\cdot),   \] where $\mu$ denotes a symmetric Laplace distribution. Since $\mathbb{E}\big[ \langle \boldsymbol{w}_{\ell}, \boldsymbol{Z}(k) \rangle^{2}   \big]$ converges (see Lemma~\ref{lemEwZ2}), we may conclude that \[ \mathbb{P}_{\boldsymbol{z}}\big(  | \langle \boldsymbol{w}_{\ell}, \boldsymbol{Z}(k) \rangle | > \varepsilon n   \big) = \mathscr{o}\Big(\frac{1}{n^{2}}\Big) \quad \text{as } \, n \to \infty.  \]
					
					Summing over $k \in [\delta n, n]$, we arrive at $\eqref{VWT4}$.  \qedhere
				\end{proof} \begin{proof}[Proof of Theorem~\ref{theobessel}] Combining Propositions~\ref{proptightnessu} and \ref{proptightnessw}, we get the tightness of $(( \boldsymbol{Z}(\lfloor nt \rfloor)/n )_{t \in [0,T]})_{n \in \mathbb{N}_{0}}$ in $\mathcal{D}([0,T], \mathbb{R}^{d})$ under $\widehat{\mathbb{P}}$. Together with Propositions~\ref{propentrancelaw} and \ref{proptransitionprob} we obtain Theorem~\ref{theobessel}. 
				\end{proof}
				
				
				\section{Population average of ancestral types}\label{secpopulationaverage}
				In this section, we are interested in the behaviour of the sequence of types along a typical branch of the population tree. It turns out that this  is completely determined by the left and right eigenvectors $\boldsymbol{v}$ and $\boldsymbol{u}$. A key role is played by the probability vector $\boldsymbol{\alpha}\coloneq (u_{i}v_{i})_{i \in [d]}$. As observed in \cite[Coro.~1]{jagers1992stabilities}, \cite[Prop.~1]{jagers1996asymptotic}, and \cite{georgii2003supercritical} for the supercritical case, this probability vector describes the distribution of ancestral types of an equilibrium population with type frequencies given by $\boldsymbol{v}$. The vector $\boldsymbol{\alpha}$ will therefore be called the \emph{ancestral (type) distribution}. In the supercritical case, the strong law of large numbers is at work once more, and one has an almost sure concentration result. We will now show  that, in the critical case, $\boldsymbol{\alpha}$ will play the same role, but now in the sense of the weak law of large numbers.\\
				
				\begin{figure}[h]
					\includegraphics[width=0.8\textwidth]{populationaverage.tikz}
					\caption{A familiy tree for $d=3$, where $A^{(1)}_{\textcolor{navyblue}{\text{blue}}}(5)$ is highlighted in \textbf{bold}.}
					\label{figpopulationaverage}
				\end{figure}  
				To begin, we consider a typical individual $x \in X_{n}$ alive at some large time $n$ and ask for the type $\sigma(x(n-m))$ of its ancestor $x(n-m)$ living at some earlier time $n-m$. We will see that $\sigma(x(n-m))$ is asymptotically distributed according to $\boldsymbol{\alpha}$. Specifically, let $m < n \in \mathbb{N}$ and \begin{equation}
					\boldsymbol{A}^{(m)}(n) \coloneq (A_{i}^{(m)}(n))_{i \in [d]} \quad \text{with} \quad	A_{i}^{(m)}(n) = \frac{\sum_{x \in X_{i}(n-m)} \langle \boldsymbol{1}, \boldsymbol{Z}(x,n) \rangle}{\langle \boldsymbol{1}, \boldsymbol{Z}(n) \rangle } \label{eqdefAnm}
				\end{equation} be the empirical ancestral type distribution at time $n-m$ taken over the population $X_{n}$ (see Figure \ref{figpopulationaverage}). Here, $X_{i}(n) = \{ x \in X(n): \sigma(x)=i   \}$ and $\boldsymbol{Z}(x,n)$ denotes the vector holding the number of descendants of the various types of $x$ alive in generation $n$. \\
				
				Since the next result is independent of the  value of $\boldsymbol{Z}(0)= \boldsymbol{z} \in \mathbb{N}_{0}^{d} \setminus \{ \boldsymbol{0}\}$, for the remainder of the section we may write $\mathbb{P}$ and $\widehat{\mathbb{P}}$, neglecting the subscript. 
				
				\begin{theorem}\label{theopopaverageancestraltypes}
					Let $\boldsymbol{A}^{(m)}(n)$ be as in \eqref{eqdefAnm}. Then \[ \lim_{m \to \infty} \lim_{n \to \infty} \boldsymbol{A}^{(m)}(n) 
					= \boldsymbol{\alpha} \quad \text{in $\widehat{\mathbb{P}}$-probability.}   \] 
				\end{theorem}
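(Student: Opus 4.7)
The plan is to approximate the numerator $N_i^{(n,m)} \coloneq \sum_{x \in X_i(n-m)}\langle \boldsymbol{1}, \boldsymbol{Z}(x,n)\rangle$ and the denominator $D^{(n,m)} \coloneq \langle \boldsymbol{1}, \boldsymbol{Z}(n)\rangle$ of $A_i^{(m)}(n)$ by their conditional means given $\mathcal{F}_{n-m}$, then feed the macroscopic profile $\boldsymbol{Z}(n-m)/n$ into the entrance law of Proposition~\ref{propentrancelaw}, and finally send $m\to\infty$ using Perron--Frobenius asymptotics for $\mathbf{M}^m \boldsymbol{1}$. The branching property gives, under $\mathbb{P}$ and conditional on $\mathcal{F}_{n-m}$, the decomposition $N_i^{(n,m)} = \sum_{k=1}^{Z_i(n-m)} \zeta_k^{(i,m)}$ with $(\zeta_k^{(i,m)})_k$ i.i.d.\ copies of $\langle \boldsymbol{1}, \boldsymbol{Z}(m)\rangle$ under $\mathbb{P}_{\boldsymbol{e}_i}$, of mean $\mu_i^{(m)} \coloneq (\mathbf{M}^m \boldsymbol{1})_i$ and fixed-$m$ finite variance.

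Since $\mathbb{E}_{\boldsymbol{z}}[\boldsymbol{Z}(n-m)] = \boldsymbol{z}^T \mathbf{M}^{n-m}$ stays bounded in $n$ in the critical case (as $\mathbf{M}^n \to \boldsymbol{u}\boldsymbol{v}^T$), a conditional variance computation and Chebyshev's inequality yield
\[
\mathbb{P}_{\boldsymbol{z}}\Big(\big|N_i^{(n,m)} - Z_i(n-m)\mu_i^{(m)}\big| > \varepsilon n\Big) = O(n^{-2}),
\]
and similarly for $D^{(n,m)}$ versus $\langle \boldsymbol{Z}(n-m), \mathbf{M}^m \boldsymbol{1}\rangle$. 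I would transfer these bounds to $\widehat{\mathbb{P}}$ through Cauchy--Schwarz on the change of measure: for any $B\in\mathcal{F}_n$,
\[
\widehat{\mathbb{P}}_{\boldsymbol{z}}(B) = \mathbb{E}_{\boldsymbol{z}}\Big[\tfrac{h(\boldsymbol{Z}(n))}{h(\boldsymbol{z})}\mathds{1}_B\Big] \leq \frac{1}{h(\boldsymbol{z})}\sqrt{\mathbb{E}[h(\boldsymbol{Z}(n))^2]}\sqrt{\mathbb{P}_{\boldsymbol{z}}(B)} = O\big(\sqrt{n\,\mathbb{P}_{\boldsymbol{z}}(B)}\big),
\]
where $\mathbb{E}[h(\boldsymbol{Z}(n))^2]=O(n)$ comes from Lemma~\ref{lemVWFDD9} together with the martingale property of $h(\boldsymbol{Z}(n))$. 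Applied to the Chebyshev events above, this gives a $\widehat{\mathbb{P}}$-probability of $O(n^{-1/2})\to 0$, so that, under $\widehat{\mathbb{P}}$,
\[
\frac{N_i^{(n,m)}}{n} = \frac{Z_i(n-m)}{n}\,\mu_i^{(m)} + o_{\widehat{\mathbb{P}}}(1), \qquad \frac{D^{(n,m)}}{n} = \Big\langle \frac{\boldsymbol{Z}(n-m)}{n}, \mathbf{M}^m\boldsymbol{1}\Big\rangle + o_{\widehat{\mathbb{P}}}(1).
\]

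Applying Proposition~\ref{propentrancelaw} at $t=1$, $\boldsymbol{Z}(n-m)/n \Rightarrow W^*\boldsymbol{v}$ under $\widehat{\mathbb{P}}$ with $W^* \sim \Gamma(2, 1/Q[\boldsymbol{u}])$ strictly positive a.s., and Slutsky's theorem turns the previous display into
\[
A_i^{(m)}(n) = \frac{N_i^{(n,m)}}{D^{(n,m)}} \ \Rightarrow\ \frac{W^*\, v_i\, \mu_i^{(m)}}{W^*\,\langle \boldsymbol{v}, \mathbf{M}^m \boldsymbol{1}\rangle} = v_i\,(\mathbf{M}^m \boldsymbol{1})_i,
\]
where $\langle \boldsymbol{v}, \mathbf{M}^m \boldsymbol{1}\rangle = \langle \boldsymbol{v}, \boldsymbol{1}\rangle = 1$ by left-eigenvector invariance and our normalisation. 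The limit is deterministic, so convergence holds in $\widehat{\mathbb{P}}$-probability for every fixed $m$. Letting $m\to\infty$, Perron--Frobenius (via $\mathbf{M}^m = \boldsymbol{u}\boldsymbol{v}^T + O(\alpha^m)$) gives $\mathbf{M}^m \boldsymbol{1} = \boldsymbol{u} + O(\alpha^m)$, whence $v_i (\mathbf{M}^m\boldsymbol{1})_i \to u_i v_i = \alpha_i$, which is the claim. The main technical point is the Cauchy--Schwarz transfer step: the $\sqrt{n}$ cost of switching from $\mathbb{P}$ to $\widehat{\mathbb{P}}$ is exactly absorbed by the $n^{-2}$ decay from Chebyshev, which itself crucially relies on the fact that $\mathbb{E}_{\boldsymbol{z}}[\boldsymbol{Z}(n)]$ does \emph{not} grow in the critical regime. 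An alternative route would be to exploit the size-biased spine construction of Section~\ref{secsizebiasing} to estimate second moments directly under $\widehat{\mathbb{P}}$, but the Cauchy--Schwarz approach is self-contained and uses only the tools already in place.
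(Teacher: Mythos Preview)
Your argument is correct and reaches the same intermediate limit $\alpha_i^{(m)}=v_i(\mathbf{M}^m\boldsymbol 1)_i$ as the paper, but the mechanism for passing from $\mathbb{P}$ to $\widehat{\mathbb{P}}$ is genuinely different. The paper proves the stronger auxiliary statement $\boldsymbol{C}_{i,m}(n)\coloneq Z_i(n)^{-1}\sum_{x\in X_i(n)}\boldsymbol{Z}(x,n+m)\to \mathbb{E}_{\boldsymbol{e}_i}[\boldsymbol{Z}(m)]$ in $\widehat{\mathbb{P}}$-probability, and to do so it truncates the Radon--Nikodym derivative on the event $\{h(\boldsymbol{Z}(n+m))/h(\boldsymbol{Z}(n))\le R\}$, handles the complement via martingale second-moment estimates, and then applies the weak law of large numbers after conditioning on $\mathcal{F}_n$. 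Your route bypasses this entirely: you work with $N_i^{(n,m)}/n$ and $D^{(n,m)}/n$ directly, extract an $O(n^{-2})$ Chebyshev bound under $\mathbb{P}$ from the critical-case fact that $\mathbb{E}_{\boldsymbol z}[\boldsymbol{Z}(n)]$ stays bounded, and then use Cauchy--Schwarz together with the second-moment bound of Lemma~\ref{lemVWFDD9} to absorb the $\sqrt n$ cost of the change of measure.

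Your approach is shorter and more elementary; it needs only Chebyshev, Cauchy--Schwarz, and the entrance law, and it makes the role of criticality (bounded first moments) very explicit. The paper's approach, on the other hand, isolates the convergence of $\boldsymbol{C}_{i,m}(n)$ as a stand-alone result, which it immediately reuses in the proof of Theorem~\ref{coropopaverage} to transfer the conclusion back to $\mathbb{P}(\,\cdot\mid\Omega_{\mathrm{surv}}(n))$; its truncation argument also does not rely on matching rates and would survive, e.g., weaker moment assumptions where your $O(n^{-2})$ versus $O(\sqrt n)$ balance could become tighter. Both arguments ultimately close via Slutsky and the Perron--Frobenius asymptotics $\mathbf{M}^m\boldsymbol 1\to\boldsymbol u$.
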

				\begin{proof} According to \cite[Thm.~II.5.1]{harris1963theory}, we have $\mathbb{E}_{\boldsymbol{e}_{i}}[ Z_{j}(m)] = (\mathbf{M}^{m})_{i,j} \underset{m \to \infty}{\longrightarrow} u_{i}v_{j}$, which immediately implies \[ \mathbb{E}_{\boldsymbol{e}_{i}}[ \langle \boldsymbol{1}, \boldsymbol{Z}(m) \rangle] = \sum_{j \in [d]} \mathbb{E}_{\boldsymbol{e}_{i}} [ Z_{j}(m) ] \underset{m \to \infty}{\longrightarrow}  \sum_{j \in [d]} u_{i}v_{j} = u_{i} \langle \boldsymbol{1}, \boldsymbol{v} \rangle = u_{i}.     \] 	
					
					Therefore, setting $\boldsymbol{\alpha}^{(m)}\coloneq (\alpha_{i}^{(m)})_{i \in [d]}$ with  $\alpha_{i}^{(m)} \coloneq v_{i} \cdot \mathbb{E}_{\boldsymbol{e}_{i}} [ \langle \boldsymbol{1}, \boldsymbol{Z}(m) \rangle ]   $, the theorem will follow from \[ \lim_{n \to \infty} \boldsymbol{A}^{(m)}(n) = \boldsymbol{\alpha}^{(m)}, \quad \text{in $\widehat{\mathbb{P}}$-probability.}   \]
					
					In order to see this, we further set \[ \boldsymbol{C}_{i, m}(n) \coloneq \frac{1}{Z_{i}(n)} \sum_{x \in X_{i}(n)} \boldsymbol{Z}(x, n+m) \quad \text{and} \quad \Pi_{i}(n)\coloneq \frac{Z_{i}(n)}{\langle \boldsymbol{1}, \boldsymbol{Z}(n) \rangle} \] to find \begin{equation}A_{i}^{(m)}(n+m) = \frac{Z_{i}(n) \langle \boldsymbol{1}, \boldsymbol{C}_{i,m}(n)\rangle}{\langle \boldsymbol{1}, \boldsymbol{Z} (n+m) \rangle} = \frac{\Pi_{i}(n)  \langle \boldsymbol{1}, \boldsymbol{C}_{i,m}(n)\rangle}{\sum_{k \in [d]}\Pi_{k}(n)  \langle \boldsymbol{1}, \boldsymbol{C}_{k,m}(n)\rangle},      \label{eqdecompA}
					\end{equation} since \[\sum_{k \in [d]} Z_{k}(n)  \langle \boldsymbol{1}, \boldsymbol{C}_{k,m}(n)\rangle = \sum_{k \in [d]} \sum_{x \in X_{k}(n)} \langle \boldsymbol{1}, \boldsymbol{Z}(x, n+m) \rangle = \langle \boldsymbol{1}, \boldsymbol{Z}(n+m) \rangle.  \]   
					
					On the one hand, Proposition~\ref{propentrancelaw} (with $t=1$) already yields that \begin{equation} \lim_{n \to \infty} \Pi_{i}(n) = v_{i} \quad \text{in $\widehat{\mathbb{P}}$-probability}. \label{ancestraldistribconvpi}
					\end{equation}
					
					On the other hand, we now want to prove that, for $\boldsymbol{a}^{(m)}\coloneq \mathbb{E}_{\boldsymbol{e}_{i}}[  \boldsymbol{Z}(m)]$,
					\[ \lim_{n \to \infty}  \boldsymbol{C}_{i, m}(n) = \boldsymbol{a}^{(m)} \quad \text{in $\widehat{\mathbb{P}}$-probability.}  \]  By the definition of $\widehat{\mathbb{P}}$, we have for all $\varepsilon >0$  \[  \widehat{\mathbb{P}} \Big( \big|   \boldsymbol{C}_{i, m}(n) -\boldsymbol{a}^{(m)}  \big| > \varepsilon \Big) = \frac{1}{\langle \boldsymbol{u}, \boldsymbol{z} \rangle} \mathbb{E}\big[ \langle \boldsymbol{u}, \boldsymbol{Z}(n+m) \rangle  \mathds{1}_{B(n,m, \varepsilon)} \big],  \] where $B(n,m, \varepsilon) \coloneq \big\{  |  \boldsymbol{C}_{i, m}(n)-\boldsymbol{a}^{(m)}  | > \varepsilon    \big\}$. Define also $D(n, m, R) \coloneq \big\{ \frac{ \langle \boldsymbol{u}, \boldsymbol{Z}(n+m) \rangle}{ \langle \boldsymbol{u}, \boldsymbol{Z}(n) \rangle} \leq R \big\}$ for some $R>1$. Then \begin{align}  \widehat{\mathbb{P}} \Big( \big|    \boldsymbol{C}_{i, m}(n)-\boldsymbol{a}^{(m)}  \big| > \varepsilon \Big)  \leq& \frac{R}{\langle \boldsymbol{u}, \boldsymbol{z} \rangle} \mathbb{E} \big[ \langle \boldsymbol{u}, \boldsymbol{Z}(n) \rangle \mathds{1}_{B(n,m, \varepsilon)}      \big] \nonumber \\
						&+  \frac{1}{\langle \boldsymbol{u}, \boldsymbol{z} \rangle} \mathbb{E} \big[ \langle \boldsymbol{u}, \boldsymbol{Z}(n+m) \rangle \mathds{1}_{D^{c}(n,m,R)}   \big]. \label{eqpopaverageBD}
					\end{align} Let us bound the second expectation on the right-hand side: \begin{align}
						\mathbb{E} \big[ \langle \boldsymbol{u}, \boldsymbol{Z}(n+m) \rangle  \mathds{1}_{D^{c}(n,m,R)}   \big] =& \mathbb{E} \big[ \langle  \boldsymbol{u}, \boldsymbol{Z} (n)  \rangle  \mathds{1}_{D^{c}(n,m,R)}   \big] + \mathbb{E} \big[ (\langle \boldsymbol{u}, \boldsymbol{Z}(n+m) \rangle - \langle \boldsymbol{u}, \boldsymbol{Z}(n) \rangle)  \mathds{1}_{D^{c}(n,m,R)}  \big] \nonumber \\
						\leq& \mathbb{E} \big[ \langle  \boldsymbol{u}, \boldsymbol{Z} (n)  \rangle  \mathds{1}_{D^{c}(n,m,R)}   \big] +E_{1}+E_{2}, \label{E1E2}
					\end{align} where \begin{align*}
						E_{1}\coloneq& \mathbb{E} \big[ (\langle \boldsymbol{u}, \boldsymbol{Z}(n+m) \rangle - \langle \boldsymbol{u}, \boldsymbol{Z}(n) \rangle) \mathds{1}_{\{ \langle \boldsymbol{u}, \boldsymbol{Z}(n) \rangle \leq 1  \}}   \big] \\
						\text{and} \quad E_{2}\coloneq& \mathbb{E} \big[ (\langle \boldsymbol{u}, \boldsymbol{Z}(n+m) \rangle - \langle \boldsymbol{u}, \boldsymbol{Z}(n) \rangle) \mathds{1}_{\{ \langle \boldsymbol{u}, \boldsymbol{Z}(n) \rangle > 1  \}} \mathds{1}_{D^{c}(n,m, R)}  \big].
					\end{align*}
					
					For $E_{1}$, we find by the martingale property of $\langle \boldsymbol{u}, \boldsymbol{Z}(n)\rangle$: 
					\[ E_{1} \leq \mathbb{E} \big[\langle \boldsymbol{u}, \boldsymbol{Z}(n+m) \rangle  \mathds{1}_{\{ \langle \boldsymbol{u}, \boldsymbol{Z}(n)\rangle \leq 1  \}}  \big] = \mathbb{E}\big[ \mathbb{E}[  \langle \boldsymbol{u}, \boldsymbol{Z}(n+m) \rangle \ \vert \ \mathcal{F}_{n} ]  \mathds{1}_{\{ \langle \boldsymbol{u}, \boldsymbol{Z}(n)\rangle \leq 1  \}}  \big] = \mathbb{E} \big[\langle \boldsymbol{u}, \boldsymbol{Z}(n) \rangle  \mathds{1}_{\{ \langle \boldsymbol{u}, \boldsymbol{Z}(n)\rangle \leq 1  \}}  \big] \] and, continuing by the definition of $\widehat{\mathbb{P}}$, \begin{equation} E_{1} \leq \mathbb{E} \big[ \langle \boldsymbol{u}, \boldsymbol{Z}(n) \rangle  \mathds{1}_{\{\langle \boldsymbol{u}, \boldsymbol{Z}(n) \rangle\} \leq 1}   \big] = \langle \boldsymbol{u}, \boldsymbol{z} \rangle \widehat{\mathbb{P}}(\langle \boldsymbol{u}, \boldsymbol{Z}(n) \rangle \leq 1  ) \underset{n \to \infty}{\longrightarrow} 0 \label{eqpopaverageE1} 
					\end{equation} since, under $\widehat{\mathbb{P}}$,  $\frac{\langle \boldsymbol{u}, \boldsymbol{Z}(n) \rangle}{n}$ converges to a non-degenerate distribution with no atom at $0$ (see Proposition \ref{propentrancelaw} with $t=1$). \\
					
					Concerning $E_{2}$, if $\frac{ \langle \boldsymbol{u}, \boldsymbol{Z}(n+m) \rangle}{ \langle \boldsymbol{u}, \boldsymbol{Z}(n) \rangle} > R$, then \[ \frac{\langle \boldsymbol{u}, \boldsymbol{Z}(n+m)\rangle - \langle \boldsymbol{u}, \boldsymbol{Z}(n)\rangle + \langle \boldsymbol{u}, \boldsymbol{Z}(n)\rangle}{\langle \boldsymbol{u}, \boldsymbol{Z}(n)\rangle} > R \] and, if further $\langle \boldsymbol{u}, \boldsymbol{Z}(n) \rangle >1$, also \[  \langle \boldsymbol{u}, \boldsymbol{Z}(n+m)\rangle - \langle \boldsymbol{u}, \boldsymbol{Z}(n)\rangle > \langle \boldsymbol{u}, \boldsymbol{Z}(n)\rangle (R-1).   \] So, by Markov's inequality, \[ E_{2} \leq \frac{1}{R-1} \mathbb{E} \big[  \langle \boldsymbol{u}, \boldsymbol{Z}(n+m)-\boldsymbol{Z}(n) \rangle^{2}    \big]. \]
					
					In particular, since the second moment of $\langle \boldsymbol{u}, \boldsymbol{Z}(n+m)-\boldsymbol{Z}(n) \rangle$ is finite (by Lemma~\ref{lemVWFDD9}), \begin{equation} E_{2} \leq \frac{Cm \langle \boldsymbol{u}, \boldsymbol{z} \rangle}{R-1} \label{eqpopaverageE2}
					\end{equation} for some $C>0$. Furthermore, the definition of $D(n, m, R)$ together with the martingale property imply that 
					\begin{equation}\label{EuZn} \mathbb{E}\big[ \langle \boldsymbol{u}, \boldsymbol{Z}(n) \rangle  \mathds{1}_{D^{c}(n,m, R)}     \big] \leq \frac{1}{R}  \mathbb{E}[\langle \boldsymbol{u}, \boldsymbol{Z}(n+m) \rangle] = \frac{\langle \boldsymbol{u}, \boldsymbol{z} \rangle}{R}.   
					\end{equation}
					
					Inserting \eqref{eqpopaverageE1}--\eqref{EuZn} into the right-hand side of \eqref{E1E2} and returning to \eqref{eqpopaverageBD}, we arrive at \begin{align} \widehat{\mathbb{P}} \Big( \big|\boldsymbol{C}_{i, m}(n) -\boldsymbol{a}^{(m)}  \big| > \varepsilon \Big)	\leq  \frac{R}{\langle \boldsymbol{u}, \boldsymbol{z} \rangle} &\mathbb{E} \big[ \langle \boldsymbol{u}, \boldsymbol{Z}(n) \rangle \mathds{1}_{B(n,m, \varepsilon)}    \big] + \frac{1}{R}+ \widehat{\mathbb{P}}(\langle \boldsymbol{u}, \boldsymbol{Z}(n) \rangle \leq 1  )  + \frac{Cm}{R-1} . \label{eqlln}
					\end{align}	
					
					Recalling that $Z_{i}(n)= | X_{i}(n)|$ and that the $((\boldsymbol{Z}(x, n+m))_{m \in \mathbb{N}_{0}})_{x \in X_{i}(n)}$ are i.i.d.\ critical multitype branching processes in their own right, all started with $\boldsymbol{e}_{i}$, we may write \[ \mathds{1}_{B(n,m, \varepsilon)} = \sum_{k=1}^{\infty} \mathds{1}_{B(n,m, \varepsilon)} \mathds{1}_{\{Z_{i}(n)=k \}} = \sum_{k=1}^{\infty} \mathds{1}_{\{ Z_{i}(n)=k  \}} \mathds{1}_{C_{k}},  \] where  $C_{k} \coloneq \big\{ | \big (\sum_{\ell=1}^{k} \boldsymbol{Z}^{(\ell)}(m) \big ) /k-\boldsymbol{a}^{(m)}   | > \varepsilon   \big\}$ and the  $((\boldsymbol{Z}^{(\ell)}(m))_{m \in \mathbb{N}})_{\ell \in \mathbb{N}}$ denote i.i.d. copies of our critical multitype branching process, all started with $\boldsymbol{e}_{i}$. In particular, due to their independence of $\boldsymbol{Z}(n)$, \[\mathbb{E}[\mathds{1}_{B(n,m, \varepsilon)} \ \vert \ \mathcal{F}_{n} ] = \sum_{k=1}^{\infty} \mathds{1}_{\{Z_{i}(n)=k  \}}  \mathbb{P}(C_{k}). \] Therefore, for $r >0$,
					\begin{equation} \begin{split}\label{uZn1} \mathbb{E} \big[ \langle \boldsymbol{u}, \boldsymbol{Z}(n) \rangle  \mathds{1}_{B(n,m, \varepsilon)}  \big]  =& \mathbb{E} \Big[ \langle \boldsymbol{u}, \boldsymbol{Z}(n) \rangle \mathbb{E}\big[ \mathds{1}_{B(n,m, \varepsilon)} \ \big\vert \ \mathcal{F}_{n} \big]  \Big] = \mathbb{E} \bigg[ \langle \boldsymbol{u}, \boldsymbol{Z}(n) \rangle  \sum_{k=1}^{\infty} \mathds{1}_{\{Z_{i}(n)=k  \}} \mathbb{P}(C_{k})   \bigg] \\
						\leq&  \mathbb{E} \big[ \langle \boldsymbol{u}, \boldsymbol{Z}(n) \rangle  \mathds{1}_{\{ Z_{i}(n) \leq r \}}  \big] + \max_{k>r} \ \mathbb{P}(C_{k}) \  \mathbb{E} [ \langle \boldsymbol{u}, \boldsymbol{Z}(n) \rangle ] \\
						=& \langle \boldsymbol{u}, \boldsymbol{Z}(0) \rangle \Big(  \widehat{\mathbb{P}}(Z_{i}(n) \leq r) + \max_{k>r} \mathbb{P}(C_{k})  \Big),
					\end{split}\end{equation}
					where we used the martingale property of $\langle \boldsymbol{u}, \boldsymbol{Z}(n)\rangle$ in the last step. Then, analogously to \eqref{eqpopaverageE1}, \[ \widehat{\mathbb{P}}(Z_{i}(n) \leq r) \underset{n \to \infty}{\longrightarrow} 0  \] and, by the weak law of large numbers, \[   \max_{k>r} \mathbb{P}(C_{k})  \underset{r \to \infty}{\longrightarrow} 0.  \]
					Therefore, if we first insert \eqref{uZn1} into \eqref{eqlln} and then let $n \to \infty$, $r \to \infty$, and  $R \to \infty$ (in this order), we finally obtain \begin{equation}\lim_{n \to \infty}  \boldsymbol{C}_{i, m}(n) = \mathbb{E}_{\boldsymbol{e}_{i}} [  \boldsymbol{Z}(m)] \quad \text{in $\widehat{\mathbb{P}}$-probability}. \label{ancestraldistribconvC}
					\end{equation}
					
					Due to Slutsky's lemma, \eqref{eqdecompA}, \eqref{ancestraldistribconvpi}, and \eqref{ancestraldistribconvC} together entail \[ A_{i}^{(m)}(n+m) \Rightarrow \alpha_{i}^{(m)} \quad \text{as } \, n \to \infty \] and, since  $\alpha_{i}^{(m)} = v_{i} \cdot \mathbb{E}_{\boldsymbol{e}_{i}}[ \langle \boldsymbol{1}, \boldsymbol{Z}(m) \rangle ]   $ is deterministic, actually	\[ \lim_{n \to \infty} A_{i}^{(m)}(n+m) = \alpha_{i}^{(m)} \quad \text{in $\widehat{\mathbb{P}}$-probability}.   \] Therefore, \[ \lim_{m \to \infty} \lim_{n \to \infty} A_{i}^{(m)}(n) = \lim_{m \to \infty} v_{i} \mathbb{E}_{\boldsymbol{e}_{i}}[ \langle \boldsymbol{1}, \boldsymbol{Z}(m) \rangle ] = u_{i}v_{i} = \alpha_{i} \quad \text{in $\widehat{\mathbb{P}}$-probability.}  \qedhere  \]
					
				\end{proof}
				
				From Theorem~\ref{theopopaverageancestraltypes}, we can now derive the announced weak analogue of the almost-sure convergence in the supercritical case \cite[Thm.~3.1]{georgii2003supercritical}:
				
				\begin{theorem} \label{coropopaverage}
					For every $\varepsilon>0$, we have \[ \lim_{m \to \infty} \lim_{n \to \infty} \mathbb{P} \Big( \big|\boldsymbol{A}^{(m)}(n) - \boldsymbol{\alpha} \big| > \varepsilon \ \Big\vert \ \Omega_{\mathrm{surv}}(n)  \Big) = 0.  \]
				\end{theorem}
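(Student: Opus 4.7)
The plan is to transfer Theorem~\ref{theopopaverageancestraltypes} from the $h$-transformed measure $\widehat{\mathbb{P}}$ back to $\mathbb{P}(\cdot\mid\Omega_{\mathrm{surv}}(n))$ by the same dissection device that was used to derive Proposition~\ref{corobessel} from Theorem~\ref{theobessel}. Write $B_{m,n,\varepsilon}\coloneq\{|\boldsymbol{A}^{(m)}(n)-\boldsymbol{\alpha}|>\varepsilon\}$ and note that $B_{m,n,\varepsilon}\subset\Omega_{\mathrm{surv}}(n)$, since the denominator in \eqref{eqdefAnm} vanishes whenever $X_n=\varnothing$. Definition~\ref{defhatP} then rewrites
\begin{equation*}
\mathbb{P}_{\boldsymbol{z}}\big(B_{m,n,\varepsilon}\mid\Omega_{\mathrm{surv}}(n)\big)=\frac{\langle\boldsymbol{u},\boldsymbol{z}\rangle}{\mathbb{P}_{\boldsymbol{z}}(\Omega_{\mathrm{surv}}(n))}\,\widehat{\mathbb{E}}_{\boldsymbol{z}}\bigg[\frac{\mathds{1}_{B_{m,n,\varepsilon}}}{\langle\boldsymbol{u},\boldsymbol{Z}(n)\rangle}\bigg],
\end{equation*}
and by \eqref{eqlimitOmegasurvn} the prefactor is asymptotically equivalent to $nQ[\boldsymbol{u}]$ as $n\to\infty$.

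Next I would fix $\delta>0$ and split the $\widehat{\mathbb{E}}$-expectation according to whether $\langle\boldsymbol{u},\boldsymbol{Z}(n)\rangle$ exceeds $\delta n$ or not. On $\{\langle\boldsymbol{u},\boldsymbol{Z}(n)\rangle>\delta n\}$ the reciprocal is at most $1/(\delta n)$, so this contribution, once multiplied by the prefactor, is bounded above by $(Q[\boldsymbol{u}]/\delta)\,\widehat{\mathbb{P}}_{\boldsymbol{z}}(B_{m,n,\varepsilon})+o(1)$, which vanishes under $\lim_{m\to\infty}\lim_{n\to\infty}$ thanks to Theorem~\ref{theopopaverageancestraltypes}. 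On the complementary event, bounding $\mathds{1}_{B_{m,n,\varepsilon}}\le 1$ and reversing the change of measure yields
\begin{equation*}
\frac{\langle\boldsymbol{u},\boldsymbol{z}\rangle}{\mathbb{P}_{\boldsymbol{z}}(\Omega_{\mathrm{surv}}(n))}\,\widehat{\mathbb{E}}_{\boldsymbol{z}}\bigg[\frac{\mathds{1}_{\{\langle\boldsymbol{u},\boldsymbol{Z}(n)\rangle\le\delta n\}}}{\langle\boldsymbol{u},\boldsymbol{Z}(n)\rangle}\bigg]=\mathbb{P}_{\boldsymbol{z}}\big(\langle\boldsymbol{u},\boldsymbol{Z}(n)\rangle\le\delta n\,\big|\,\Omega_{\mathrm{surv}}(n)\big),
\end{equation*}
which by \eqref{resjoffespitzer} together with $\langle\boldsymbol{u},\boldsymbol{v}\rangle=1$ converges as $n\to\infty$ to $\mathbb{P}(W\le\delta)=1-e^{-\delta/Q[\boldsymbol{u}]}\le\delta/Q[\boldsymbol{u}]$.

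Combining the two estimates gives, for every $\delta>0$,
\begin{equation*}
\limsup_{n\to\infty}\mathbb{P}_{\boldsymbol{z}}\big(B_{m,n,\varepsilon}\mid\Omega_{\mathrm{surv}}(n)\big)\le\frac{Q[\boldsymbol{u}]}{\delta}\limsup_{n\to\infty}\widehat{\mathbb{P}}_{\boldsymbol{z}}(B_{m,n,\varepsilon})+\frac{\delta}{Q[\boldsymbol{u}]}.
\end{equation*}
Sending $m\to\infty$ annihilates the first summand by Theorem~\ref{theopopaverageancestraltypes}, and since the left-hand side does not depend on $\delta$ one then lets $\delta\to 0$ to conclude. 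There is essentially no obstacle: the argument is a verbatim translation of the template of Proposition~\ref{corobessel}, and the one point that deserves a brief sanity check is that $B_{m,n,\varepsilon}\subset\Omega_{\mathrm{surv}}(n)$ together with the $\widehat{\mathbb{P}}$-almost-sure strict positivity of $\langle\boldsymbol{u},\boldsymbol{Z}(n)\rangle$ makes the reciprocal in the $\widehat{\mathbb{E}}$-expectation well defined, which is immediate from the construction of the $h$-transform.
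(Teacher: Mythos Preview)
Your proof is correct, and takes a somewhat different route from the paper. The paper revisits the decomposition \eqref{eqdecompA} of $A_i^{(m)}(n+m)$ into the factors $\Pi_i(n)$ and $\boldsymbol{C}_{i,m}(n)$ and transfers the convergence of each piece separately from $\widehat{\mathbb{P}}$ to the conditional measure; this forces it to redo the $\delta$-dissection for $\boldsymbol{C}_{i,m}$ specifically (and to juggle the two survival events $\Omega_{\mathrm{surv}}(n)$ and $\Omega_{\mathrm{surv}}(n+m)$ that arise from the time shift in \eqref{eqdecompA}). You instead apply the change of measure directly to the full event $B_{m,n,\varepsilon}$, using Theorem~\ref{theopopaverageancestraltypes} as a black box, which is a more faithful replay of the Proposition~\ref{corobessel} template and is slightly more economical. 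The paper's route has the minor advantage of making explicit which factor carries the $m$-dependence, but your direct transfer is cleaner.
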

				\begin{proof}
					By \eqref{eqdecompA} and Slutsky's lemma, it suffices to show that, for every $\varepsilon>0$, \begin{equation}\ \lim_{n \to \infty} \mathbb{P}\Big( \big| \Pi_{i}(n)-v_{i} \big| > \varepsilon \ \Big\vert \ \Omega_{\mathrm{surv}}(n)  \Big) = 0 \label{eqcondpiconv}
					\end{equation} and
					\begin{equation} \lim_{n \to \infty} \mathbb{P}\Big( \big| \boldsymbol{C}_{i,m}(n) -\boldsymbol{a}^{(m)}  \big| > \varepsilon \ \Big\vert \ \Omega_{\mathrm{surv}}(n+m)  \Big) = 0. \label{eqcondcimconv}
					\end{equation}
					The limit $m \to \infty$ then follows in the same way as in the proof of Theorem~\ref{theopopaverageancestraltypes}.
					Clearly, \eqref{eqcondpiconv} follows immediately  from \eqref{resjoffespitzer}. It remains to transfer the convergence of $\boldsymbol{C}_{i,m}(n)$ from $\widehat{\mathbb{P}}$ to $\mathbb{P}( \cdot \ \vert \ \Omega_{\mathrm{surv}}(n+m) )$. \\
					
					By the definition of $\widehat{\mathbb{P}}$ and \eqref{ancestraldistribconvC}, \begin{equation} \lim_{n \to \infty} \mathbb{E}\big[ \langle \boldsymbol{u}, \boldsymbol{Z}(n+m) \rangle  \mathds{1}_{\{| \boldsymbol{C}_{i,m}(n) - \boldsymbol{a}^{(m)} | > \varepsilon \} }    \big] =0. \label{VWcondconv1}
					\end{equation}
					Now, fix some $\delta >0$. Again, by \eqref{resjoffespitzer} \begin{equation}
						\lim_{n \to \infty}	\mathbb{P} \big(\langle \boldsymbol{u}, \boldsymbol{Z}(n+m) \rangle \leq \delta (n+m) \ \big\vert \ \Omega_{\mathrm{surv}}(n+m) \big) \leq c \delta \label{VWcondconv2}
					\end{equation} with some $c > 0$. Thus it remains to show that 
					\begin{equation}\label{Cm2am}
					\lim_{n \to \infty} \mathbb{P}\Big( \big| \boldsymbol{C}_{i.m}(n) - \boldsymbol{a}^{(m)} \big| > \varepsilon, \ \langle \boldsymbol{u}, \boldsymbol{Z}(n+m) \rangle > \delta (n+m) \ \Big\vert \ \Omega_{\mathrm{surv}}(n+m)  \Big)=0.   
					\end{equation}
					To this end, we notice that \eqref{VWcondconv1} implies that \[ \mathbb{P} \Big( \big| \boldsymbol{C}_{i,m}(n) - \boldsymbol{a}^{(m)} \big| > \varepsilon, \ \langle \boldsymbol{u}, \boldsymbol{Z}(n+m) \rangle > \delta n   \Big) \leq \frac{1}{\delta n} \mathbb{E} \big[ \langle \boldsymbol{u}, \boldsymbol{Z}(n+m) \rangle \cdot \mathds{1}_{\{ | \boldsymbol{C}_{i,m}(n) - \boldsymbol{a}^{(m)} | > \varepsilon   \}} \big] = \mathscr{o}\Big(\frac{1}{n}\Big).  \] Recalling now that $\lim_{n \to \infty} (n+m) \mathbb{P}( \Omega_{\mathrm{surv}}(n+m))$ is positive and finite, we see that  \eqref{Cm2am} is indeed true. Combining this with \eqref{VWcondconv2} and taking $\delta \to 0$ after $n$, we obtain \eqref{eqcondcimconv}. \qedhere
				\end{proof}
				
				
				\section{Construction of the size-biased family tree}\label{secsizebiasing}
			
			We now construct the size-biased multitype Galton-Watson tree as introduced in \cite{kurtz1997conceptual} (see also \cite{georgii2003supercritical} for the continuous-time version). Informally, this is a tree with a randomly selected trunk (or spine) along which offspring is weighted according to its size; in particular, there is always at least one offspring along the trunk so that the trunk survives forever. The children off the trunk get ordinary (unbiased) descendant trees (the bushes). As anticipated in Section~\ref{secP}, this construction is connected to $\widehat{\mathbb{P}}$. Namely, taking $\boldsymbol{Z}(0)$=$\boldsymbol{e}_{i}$ for a given type $i \in [d]$, $\widehat{\mathbb{P}}_{\boldsymbol{e}_{i}}$ gives rise to the size-biased offspring distribution, that is, the offspring distribution of an $i$-individual on the trunk \begin{equation}   \widehat{p}_{i}(\boldsymbol{\kappa}) = \frac{\langle \boldsymbol{u}, \boldsymbol{\kappa} \rangle p_{i}(\boldsymbol{\kappa})}{u_{i}}, \quad \kappa \in \mathbb{N}_{0}^{d}. \label{HOG4.1}
				\end{equation}   We denote the offspring of such an individual as $\widehat{\boldsymbol{N}}_{i}= (\widehat{N}_{i,j})_{j \in [d]}$. 
				Only one of these children is chosen as the successor on the trunk with probability proportional to $u_{j}$ when their type is $j$.
			
				That is, the successor is of type $j$ with probability $\widehat{N}_{i,j}u_{j}/\langle \widehat{\boldsymbol{N}}_{i}, \boldsymbol{u} \rangle$ for a given offspring, and with probability \begin{equation}  p_{i,j} = \mathbb{E} \bigg[ \frac{\widehat{N}_{i,j}u_{j}}{\langle \widehat{\boldsymbol{N}}_{i}, \boldsymbol{u} \rangle}   \bigg] = \frac{m_{i,j}u_{j}}{u_{i}} \label{eqjumpprobabilityretrospectivemutationchain}
				\end{equation} on average. 	\\
				
				Let $\{ \boldsymbol{N}_{x} : x \in \mathbb{X}   \}$ be as in Section~\ref{secfamilytree} and, independently of this, a sequence $\{ \widehat{\boldsymbol{N}}_{n}, \xi_{n} : n \geq 0  \}$ of random variables with values in $\mathbb{N}_{0}^{d}$ and $\mathbb{X}$, respectively, such that, for a given type $i_{0}=i$ of the root, $\xi_{0}=i$ and \begin{itemize}
					\item $\widehat{\boldsymbol{N}}_{0}$ has distribution $\widehat{p}_{i}$ and $\xi_{1}$ has conditional distribution \[ \mathbb{P}(\xi_{1}=(i_{1}, \ell_{1}) \ \vert \ \widehat{\boldsymbol{N}}_{0} ) = \frac{u_{i_{1}}}{\langle \widehat{\boldsymbol{N}}_{0}, \boldsymbol{u} \rangle} \mathds{1}_{\{\ell_{1} \leq \widehat{N}_{0, i_{1}} \}}   \] for all $(i_{1}, \ell_{1})\in \mathbb{X}_{1}$. Here, $\xi_{1}$ denotes the first successor on the trunk.
					\item For any $n \geq 1$, conditionally on $\mathbb{F}_{n-1}= \sigma\{ \widehat{\boldsymbol{N}}_{k}, \xi_{k+1}  : k< n  \}$, $\widehat{\boldsymbol{N}}_{n}$ follows the law $\widehat{p}_{\sigma(\xi_{n})}$ and \[ \mathbb{P}(\xi_{n+1}= (\xi_{n}; i_{n+1}, \ell_{n+1})| \mathbb{F}_{n-1}, \widehat{\boldsymbol{N}}_{n}) = \frac{u_{i_{n+1}}}{\langle \widehat{\boldsymbol{N}}_{n}, \boldsymbol{u} \rangle} \mathds{1}_{\{ \ell_{n+1} \leq \widehat{N}_{n, i_{n+1}}  \}}   \] for all $(i_{n+1}, \ell_{n+1}) \in [d] \times \mathbb{N}$, i.e. $\xi_{n+1}$ is a child of $\xi_{n}$ selected randomly with weight proportional to $u_{\sigma(\xi_{n+1})}$.\\
				\end{itemize} 
				
				Define $\widehat{X} = \bigcup_{n \geq 0} \widehat{X}_{n} \subset \mathbb{X}$ recursively by $\widehat{X}_{0} = \{i\}$ and $\widehat{X}_{n}= \widehat{X}_{n}^{\sharp} \cup \widehat{X}_{n}^{\flat}$ with \[ \widehat{X}_{n}^{\sharp} = \{ (\xi_{n-1}; i_{n}, \ell_{n}) \in \mathbb{X}_{n} : \ell_{n} \leq \widehat{N}_{n-1, i_{n}}  \},   \] the offspring of $\xi_{n-1}$ and \[ \widehat{X}_{n}^{\flat} = \{ (\tilde{x}; i_{n}, \ell_{n}) \in \mathbb{X}_{n} : \tilde{x} \in \widehat{X}_{n-1} \setminus \{\xi_{n-1} \}, \ell_{n} \leq N_{\tilde{x}, i_{n}} \}   \] the offspring of all other individuals in $\widehat{X}_{n-1}$. (Note that, in the last display, there is no hat on $N$; that is, the bushes have unbiased offspring.) In particular, the type process along the trunk, $\sigma(n)\coloneq \sigma(\xi_{n})$, also referred to as \emph{retrospective mutation chain} in \cite{georgii2003supercritical}, has the jump probabilities displayed in \eqref{eqjumpprobabilityretrospectivemutationchain}:
				
				\begin{definition}\label{defretrospectivemutationchain}
					The \emph{retrospective mutation chain} is the Markov chain $(\sigma(n))_{n \in \mathbb{N}_{0}}$ on $[d]$ with transition matrix  $\mathbf{P}\coloneq (p_{i,j})_{i,j \in [d]}$, with the $p_{i,j}$ of \eqref{eqjumpprobabilityretrospectivemutationchain}. 
				\end{definition} In particular, $\boldsymbol{\alpha}= (u_{i}v_{i})_{i \in [d]}$ is  the (unique) stationary distribution of the retrospective mutation chain, since \begin{align*}
				    \sum_{i=1}^{d} \alpha_{i} p_{i,j} = \sum_{i=1}^{d} u_{i}v_{i} \frac{m_{i,j}u_{j} }{u_{i}} = v_{j}u_{j}=\alpha_{j}.
				\end{align*}

				\begin{figure}[h]
		        \includegraphics[width=0.7\textwidth]{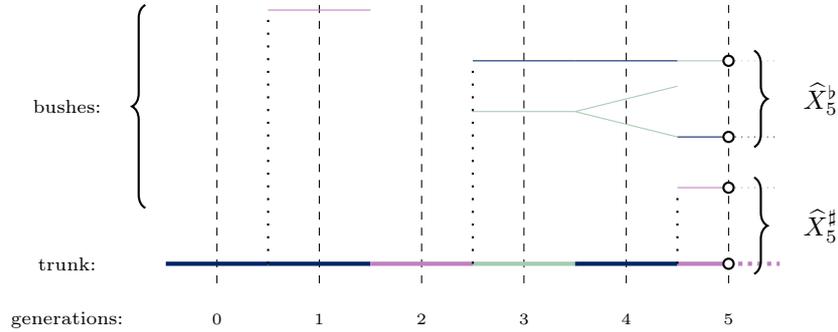}
	        	\caption{A size-biased Galton-Watson tree for $d=3$. The type process on the trunk corresponds to the retrospective mutation chain, while the collection of bushes (though still descendants of the trunk, as indicated by the dotted lines) may be understood as branching processes with immigration depending on the current state of the trunk.}
	        	\label{figtrunk}
        	\end{figure}

				In contrast, the collection of all individuals off the trunk may be understood as a branching process with immigration depending on the current type on the trunk, see Remark~\ref{DankaPap} (and Figure~\ref{figtrunk}). \\
				
				We write $\widehat{\mathbb{P}}_{\boldsymbol{e}_{i}}^{*}$ for the distribution of $(\widehat{X}_{n}, \xi_{n})_{n \in \mathbb{N}_{0}}$. The representation theorem below establishes the relationship between $\mathbb{P}_{\boldsymbol{e}_{i}}$, $\widehat{\mathbb{P}}_{\boldsymbol{e}_{i}}^{*}$, and the retrospective mutation chain. We use the shorthand $y[0,n]$ for a path $(y(m))_{m \in \{0, \dots, n\}}$.  
				\begin{proposition}[Many-to-one formula]\label{HOGtheo4.1}
					Let $n \in \mathbb{N}$, $i \in [d]$, and $F: A([n]_{0}, \mathfrak{B}_{f}(\mathbb{X}) \times \mathbb{X}) \mapsto [0,\infty)$ measurable, where $\mathfrak{B}_{f}(\mathbb{X})$ denotes the (countable) set of all finite subsets of $\mathbb{X}$ and $A([n]_{0}, \mathfrak{B}_{f}(\mathbb{X}) \times \mathbb{X})$ the space of all sequences on $[n]_{0}=\{0, \ldots, n\}$ taking values in $ \mathfrak{B}_{f}(\mathbb{X}) \times \mathbb{X}$. Then one has \begin{equation} u_{i}^{-1} \mathbb{E}_{\boldsymbol{e}_{i}} \bigg[  \sum_{x \in X_{n}} F(X[0,n], x[0,n]) u_{\sigma(x)}    \bigg] = \widehat{\mathbb{E}}_{\boldsymbol{e}_{i}}^{*} \big[ F(\widehat{X}[0,n], \xi[0,n])   \big]. \label{HOG4.2}
					\end{equation}
				\end{proposition}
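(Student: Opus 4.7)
The plan is to prove the identity by induction on $n$. The base case $n=0$ is immediate: $X_0 = \widehat{X}_0 = \{i\}$, $\xi_0 = i$, and both sides of \eqref{HOG4.2} reduce to $F(\{i\}, i)$.

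For the inductive step, I condition on the first generation. On the left-hand side, each $x \in X_n$ decomposes as $x = y \cdot x'$ with $y = x(1) \in X_1$ its first-generation ancestor and $x' \in X(y, n)$ a path in the subtree rooted at $y$. Conditioning on $\boldsymbol{N}_{i_0} = \boldsymbol{\kappa}$, the branching property renders the subtrees $(X(z, \cdot))_{z \in X_1}$ independent, with $X(y, \cdot)$ distributed as a family tree under $\mathbb{P}_{\boldsymbol{e}_{\sigma(y)}}$. I rewrite $F(X[0,n], y \cdot x'[0,n-1])$ as a function $G_y$ of the root information, the sibling bushes $(X(z,\cdot))_{z \neq y}$, and the pair $(X(y, [0,n]), x'[0,n-1])$, and apply the induction hypothesis to the $\mathbb{P}_{\boldsymbol{e}_{\sigma(y)}}$-expectation over the subtree rooted at $y$; this produces a factor $u_{\sigma(y)}$ times $\widehat{\mathbb{E}}^*_{\boldsymbol{e}_{\sigma(y)}}[G_y]$. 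Summing, the left-hand side becomes
\begin{equation*}
u_i^{-1} \sum_{\boldsymbol{\kappa}} p_i(\boldsymbol{\kappa}) \, \mathbb{E}_{\mathrm{bush}}\!\left[ \sum_{j \in [d]} \sum_{\ell=1}^{\kappa_j} u_j \, \widehat{\mathbb{E}}^*_{\boldsymbol{e}_j}[G_{(j,\ell)}] \right],
\end{equation*}
where $\mathbb{E}_{\mathrm{bush}}$ integrates over the unbiased sibling subtrees.

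On the right-hand side, the construction of the size-biased tree in Section~\ref{secsizebiasing} yields, conditional on $\widehat{\boldsymbol{N}}_0 = \boldsymbol{\kappa}$ and $\xi_1 = (j, \ell)$, that the off-spine bushes are independent unbiased $\mathbb{P}$-trees (matching $\mathbb{E}_{\mathrm{bush}}$ above), while the spine subtree rooted at $\xi_1$ is distributed as $\widehat{\mathbb{P}}^*_{\boldsymbol{e}_j}$. Hence
\begin{equation*}
\widehat{\mathbb{E}}^*_{\boldsymbol{e}_i}[F] = \sum_{\boldsymbol{\kappa}} \widehat{p}_i(\boldsymbol{\kappa}) \, \mathbb{E}_{\mathrm{bush}}\!\left[ \sum_{j \in [d]} \sum_{\ell=1}^{\kappa_j} \frac{u_j}{\langle \boldsymbol{u}, \boldsymbol{\kappa}\rangle} \, \widehat{\mathbb{E}}^*_{\boldsymbol{e}_j}[G_{(j,\ell)}] \right].
\end{equation*}
Substituting the definition $\widehat{p}_i(\boldsymbol{\kappa}) = \langle \boldsymbol{u}, \boldsymbol{\kappa}\rangle p_i(\boldsymbol{\kappa}) / u_i$ from \eqref{HOG4.1}, the factor $\langle \boldsymbol{u}, \boldsymbol{\kappa}\rangle$ cancels and the two expressions agree.

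The main obstacle is purely notational: when passing from $F$ on the entire tree to a $\widehat{\mathbb{E}}^*_{\boldsymbol{e}_j}$-expectation on the subtree rooted at the first spine step, the external data (root type, sibling bushes, and choice of $\xi_1$) must be cleanly absorbed into the modified function $G_{(j,\ell)}$, so that the induction hypothesis applies verbatim. Beyond this bookkeeping, the only probabilistic inputs are the branching property of $(X_n)_{n \in \mathbb{N}_0}$ and the Kurtz-style construction of the size-biased tree; the whole identity collapses to the single algebraic cancellation $\widehat{p}_i(\boldsymbol{\kappa}) \cdot u_j / \langle \boldsymbol{u}, \boldsymbol{\kappa}\rangle = p_i(\boldsymbol{\kappa}) u_j / u_i$.
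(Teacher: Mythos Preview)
Your inductive argument is correct: the base case is trivial, and the inductive step is exactly the one-step decomposition you describe, with the algebraic identity $\widehat{p}_i(\boldsymbol{\kappa})\,u_j/\langle\boldsymbol{u},\boldsymbol{\kappa}\rangle = p_i(\boldsymbol{\kappa})\,u_j/u_i$ doing all the work. The bookkeeping you flag (absorbing the root, the choice of $\xi_1$, and the sibling bushes into $G_{(j,\ell)}$ so that the hypothesis applies to a deterministic functional on the subtree) is routine once one conditions on the sibling bushes before invoking the $(n-1)$-step identity.

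The paper, however, does not argue by induction. It fixes a potential individual $x=(i_1,\ell_1;\ldots;i_n,\ell_n)\in\mathbb{X}_n$ and writes the weight $u_i^{-1}u_{\sigma(x)}$ as the telescoping product
\[
u_i^{-1}u_{\sigma(x)} \;=\; \underbrace{\prod_{k=0}^{n-1}\frac{\langle\boldsymbol{N}_{x_k},\boldsymbol{u}\rangle}{u_{i_k}}}_{=:q_1}\;\cdot\;\underbrace{\prod_{k=0}^{n-1}\frac{u_{i_{k+1}}}{\langle\boldsymbol{N}_{x_k},\boldsymbol{u}\rangle}}_{=:q_2},
\]
identifies $q_1$ as the Radon--Nikodym density that turns the offspring $\boldsymbol{N}_{x_k}$ along the path into the size-biased $\widehat{\boldsymbol{N}}_k$, and $q_2$ as the conditional probability $\widehat{\mathbb{P}}^*_{\boldsymbol{e}_i}(\xi_{k+1}=x_{k+1}\text{ for all }k\mid \widehat{X}[0,n])$; summing over $x$ gives the result in one stroke. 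In effect the paper performs all $n$ of your inductive cancellations simultaneously. Its approach makes the structure of the change of measure transparent (one sees at a glance that the spine law is obtained by size-biasing offspring and then selecting proportionally to $u$), whereas your induction hides this inside the recursion but has the virtue of being entirely mechanical and requiring no clever factorisation.
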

				\begin{proof} Analogously to \cite{georgii2003supercritical} (except that we work in discrete time, so do not have a time change), it is sufficient to show that \begin{equation}
						\widehat{\mathbb{E}}_{\boldsymbol{e}_{i}}^{*} \big[ F(\widehat{X}[0,n], \xi[0,n]) \ | \ \xi_{n}=x   \big] = u_{i}^{-1} u_{\sigma(x)}  \mathbb{E}_{\boldsymbol{e}_{i}} \big[ F(X[0,n], x[0,n]) \ | \ x \in X_{n}    \big] \label{HOG5.2}
					\end{equation} for all $x \in X_{n}$; the theorem then follows by summation over all $x \in X_{n}$. Suppose that $x=(i_{1}, \ell_{1}; \ldots; i_{n}, \ell_{n}) \in \mathbb{X}_{n}$, and let $x_{k}=(i_{1}, \ell_{1}; \ldots: i_{k}, \ell_{k})$ be its ancestor in generation $k$, $0 \leq k \leq n$. Consider the right-hand side of $\eqref{HOG5.2}$ and write $u_{i}^{-1}u_{\sigma(x)}= q_{1}q_{2}$ with \[ q_{1}= \prod_{k=0}^{n-1} \frac{\langle \boldsymbol{N}_{x_{k}}, \boldsymbol{u} \rangle}{u_{i_{k}}}, \quad q_{2}= \prod_{k=0}^{n-1} \frac{u_{i_{k+1}}}{\langle \boldsymbol{N}_{x_{k}}, \boldsymbol{u} \rangle}  \]since $u_{i_{n}}= u_{\sigma(x)}$ and $i_{0}=i$. Of course, the random quantities $q_{1}$ and $q_{2}$ must then be included into the expectation.  \\
					
					First, it is immediate from \eqref{HOG4.1} that the factor $q_{1}$ is precisely the Radon-Nikodym density corresponding to a change from $N_{x_{k}}$ to the size-biased offspring $\widehat{N}_{k}$ for $k \in \{0, \ldots, n-1 \}$. Then, $q_{2}$ is the conditional selection probability for the trunk: \[ q_{2}= \widehat{\mathbb{P}}_{\boldsymbol{e}_{i}}^{*}(\xi_{k+1}=x_{k+1} \text{ for } 0 \leq k <n \ | \ \widehat{X}[0,n] ).  \] The right-hand side of \eqref{HOG5.2} is therefore equal to \[ 	\widehat{\mathbb{E}}_{\boldsymbol{e}_{i}}^{*} \big[ F(\widehat{X}[0,n], x[0,n]) \cdot \mathds{1}_{\{ \xi_{k+1}=x_{k+1} \text{ for } 0 \leq k <n  \}}   \big] = 	\widehat{\mathbb{E}}_{\boldsymbol{e}_{i}}^{*} \big[ F(\widehat{X}[0,n], \xi[0,n]) \ | \ \xi_{n}=x   \big]. \qedhere  \]
				\end{proof} 
				
				\begin{remark}\label{remstationarydistribretrospectivemutationchain}
					Since $\boldsymbol{\alpha}= (u_{i}v_{i})_{i \in [d]}$ is the (unique) stationary distribution of the retrospective mutation chain and $Z_{j}(n)= \sum_{x \in X_{n}} \mathds{1}_{\{\sigma(x)=j\}}$, we may use Proposition~\ref{HOGtheo4.1} to confirm \[ \mathbb{E}_{\boldsymbol{e}_{i}}[ Z_{j}(n)] = u_{i} \ \widehat{\mathbb{P}}_{\boldsymbol{e}_{i}}^{*} (\sigma(\xi(n))=j) \ u_{j}^{-1} \underset{n \to \infty}{\longrightarrow} u_{i}\alpha_{j}u_{j}^{-1} = u_{i}v_{j}.    \]
				\end{remark}
				
				
				\section{Time average of ancestral types}\label{sectimeaverage}
				
				In our next theorem, we ask for the time average of types along the line of descent leading to a typical $x \in X_{n}$. This time average is given by the empirical distribution  \begin{equation}
					L^{x}(n) = \frac{1}{n} \sum_{k=0}^{n-1} \delta_{\sigma(x(k))} \in \mathcal{P}([d]), \label{eqdefLxn}
				\end{equation} where $\mathcal{P}([d])$ denotes the space of probability measures on $[d]$ equipped with the weak topology, and $\sigma(x(k))$ gives us the type of $x$'s ancestor at time $k$. 
				
				\begin{theorem}\label{theolargedevtimeaverage}
				For every closed $F \subset \mathcal{P}([d])$ with $\boldsymbol{\alpha} \notin F$, one has \[\limsup_{n \to \infty} \frac{1}{n} \log \Big[\mathbb{P}_{\boldsymbol{e}_{i}}\Big( \sum_{x \in X_{n}} \mathds{1}_{\{L^{x}(n) \in F\}} > 0 \ \Big{\vert} \ \Omega_{\mathrm{surv}}(n) \Big)\Big] \leq  - \inf_{\boldsymbol{\nu} \in F}J_{\mathbf{P}}(\boldsymbol{\nu})  \] with the large-deviation rate function \[ J_{\mathbf{P}}(\boldsymbol{\nu}) \coloneq \sup_{\boldsymbol{w}>0}\bigg\{ - \sum_{j=1}^{d} \nu_{j} \log \Big[ \frac{(\mathbf{P} \boldsymbol{w})_{j}}{w_{j}}  \Big] \bigg\}  \] with $\mathbf{P}$ as in Definition~\ref{defretrospectivemutationchain}.
				\end{theorem}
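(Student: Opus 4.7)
The strategy is to reduce the event on the family tree to a large-deviation statement for the retrospective mutation chain via the many-to-one formula (Proposition~\ref{HOGtheo4.1}). Since, by \eqref{eqlimitOmegasurvn}, $\mathbb{P}_{\boldsymbol{e}_i}(\Omega_{\mathrm{surv}}(n)) = \Theta(1/n)$, conditioning on survival contributes only a polynomial correction, which disappears in the logarithmic rate. Hence the claim reduces to
\[
\limsup_{n\to\infty}\frac{1}{n}\log\mathbb{E}_{\boldsymbol{e}_i}\Big[\sum_{x\in X_n}\mathds{1}_{\{L^x(n)\in F\}}\Big]\leq -\inf_{\boldsymbol{\nu}\in F}J_{\mathbf{P}}(\boldsymbol{\nu}),
\]
because by Markov's inequality the probability on the left of the theorem is bounded by the above expectation (every term in the sum is $0$ or $1$, so $\mathds{1}_{\{\sum\cdot>0\}}\leq\sum\cdot$).

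Next, I apply the many-to-one formula to the functional $F(X[0,n],x[0,n])=\mathds{1}_{\{L^x(n)\in F\}}/u_{\sigma(x)}$, which is well-defined since $u_j>0$ for all $j\in[d]$. This gives
\[
\mathbb{E}_{\boldsymbol{e}_i}\Big[\sum_{x\in X_n}\mathds{1}_{\{L^x(n)\in F\}}\Big]
=u_i\,\widehat{\mathbb{E}}_{\boldsymbol{e}_i}^{*}\bigg[\frac{\mathds{1}_{\{L^{\xi}(n)\in F\}}}{u_{\sigma(\xi_n)}}\bigg]
\leq\frac{u_i}{u_{\min}}\,\widehat{\mathbb{P}}_{\boldsymbol{e}_i}^{*}\bigl(L^{\xi}(n)\in F\bigr),
\]
with $u_{\min}\defeq\min_{j\in[d]}u_j>0$. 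Under $\widehat{\mathbb{P}}_{\boldsymbol{e}_i}^{*}$, the type process along the trunk $(\sigma(\xi_k))_{k\in\mathbb{N}_0}$ is exactly the retrospective mutation chain with transition matrix $\mathbf{P}$ started at $i$, so $L^{\xi}(n)$ is its empirical occupation measure on $[n]_0$.

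The final step is the Donsker--Varadhan large-deviation principle for empirical measures of a finite-state irreducible Markov chain. Irreducibility of $\mathbf{P}$ follows from $m_{i,j}>0$ and $u_j>0$, which yield $p_{i,j}>0$ for all $i,j$; the LDP then holds with rate function
\[
J_{\mathbf{P}}(\boldsymbol{\nu})=\sup_{\boldsymbol{w}>0}\bigg\{-\sum_{j=1}^{d}\nu_j\log\Big[\tfrac{(\mathbf{P}\boldsymbol{w})_j}{w_j}\Big]\bigg\},
\]
and this is independent of the starting state. Applying the upper bound of the LDP to the closed set $F$ and combining with the Markov/many-to-one chain above, together with $\tfrac{1}{n}\log[1/\mathbb{P}_{\boldsymbol{e}_i}(\Omega_{\mathrm{surv}}(n))]\to 0$, yields the stated inequality.

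No step is genuinely hard: the only point that requires some care is verifying that the Donsker--Varadhan rate function is indeed given in the form stated (as opposed to the dual relative-entropy expression), but this identification is standard for irreducible finite Markov chains. The assumption $\boldsymbol{\alpha}\notin F$ ensures that the infimum on the right is strictly positive (so that the estimate is nontrivial), because $J_{\mathbf{P}}$ vanishes exactly at the stationary distribution $\boldsymbol{\alpha}$ of $\mathbf{P}$ (see Remark~\ref{remstationarydistribretrospectivemutationchain}).
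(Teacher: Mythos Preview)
Your proof is correct and follows essentially the same approach as the paper's: both bound the conditional probability via Markov's inequality, apply the many-to-one formula (Proposition~\ref{HOGtheo4.1}) with the functional $\mathds{1}_{\{L^{x}(n)\in F\}}/u_{\sigma(x)}$ to transfer the expectation to the retrospective mutation chain, and then invoke the Donsker--Varadhan large-deviation upper bound together with the polynomial decay \eqref{eqlimitOmegasurvn} of the survival probability. Your additional remarks on the irreducibility of $\mathbf{P}$ and on the role of the assumption $\boldsymbol{\alpha}\notin F$ are accurate and make the argument slightly more self-contained than the paper's version.
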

				
				\begin{remark}\label{theotimeaverageancestraltypes}
				    In particular, this implies a weaker analogue of the strong law of large numbers in the supercritical case \cite[Thm.~3.2]{georgii2003supercritical}, namely: for every closed $F \subset \mathcal{P}([d])$ with $\boldsymbol{\alpha} \notin F$ and every $\varepsilon>0$,
				    we have \[ \lim_{n \to \infty} \mathbb{P}_{\boldsymbol{e}_{i}}\bigg(  \frac{1}{\langle \boldsymbol{1}, \boldsymbol{Z}(n)\rangle} \sum_{x \in X_{n}} \mathds{1}_{\{L^{x}(n) \in F\}}  > \varepsilon \ \bigg{\vert} \ \Omega_{\mathrm{surv}}(n) \bigg) =0.   \] 
				\end{remark}
				
				\begin{proof}[Proof of Theorem~\ref{theolargedevtimeaverage}]
					For fixed $\rho >0$, we consider the set $S= \{ \boldsymbol{\nu} \in  \mathcal{P}([d]) : d(\boldsymbol{\nu}, \boldsymbol{\alpha})\geq \rho  \}$, the complement of the open $\rho$-neighbourhood of $\boldsymbol{\alpha}$. It then suffices to show that \[\limsup_{n \to \infty} \frac{1}{n} \log \Big[\mathbb{P}_{\boldsymbol{e}_{i}}\Big( \sum_{x \in X_{n}} \mathds{1}_{\{L^{x}(n) \in S\}} \geq 1 \ \Big{\vert} \ \Omega_{\mathrm{surv}}(n) \Big)\Big] \leq  - \inf_{\boldsymbol{\nu} \in F}J_{\mathbf{P}}(\boldsymbol{\nu}). \]
					
					This will follow from Proposition~\ref{HOGtheo4.1} and the Donsker-Varadhan large deviation principle (see e.g. \cite[Thm.~4.1.43]{deuschel2001large} or \cite[Thm.~IV.7]{hollander2000large}). In order to apply Proposition~\ref{HOGtheo4.1}, we first use Markov's inequality to find 
					\begin{align*} \mathbb{P}_{\boldsymbol{e}_{i}}\Big( \sum_{x \in X_{n}} \mathds{1}_{\{L^{x}(n) \in S\}} \geq 1 \ \Big{\vert} \ \Omega_{\mathrm{surv}}(n) \Big) &\leq \frac{\mathbb{P}_{\boldsymbol{e}_{i}}\big(  \sum_{x \in X_{n}} \mathds{1}_{\{L^{x}(n) \in S\}} \geq 1\big)}{\mathbb{P}_{\boldsymbol{e}_{i}}(\Omega_{\mathrm{surv}}(n))} \\
						&\leq \frac{1}{\mathbb{P}_{\boldsymbol{e}_{i}}(\Omega_{\mathrm{surv}}(n))} \mathbb{E}_{\boldsymbol{e}_{i}}\Bigg[ \sum_{x \in X_{n}} \mathds{1}_{\{L^{x}(n) \in S\}} \Bigg]. 
					\end{align*} Now, for $F(X[0,n], x[0,n])= u_{i}u_{\sigma(x_{n})}^{-1} \mathds{1}_{\{L^{x_{n}}(n) \in S\}}$, Proposition~\ref{HOGtheo4.1} yields \[ \mathbb{E}_{\boldsymbol{e}_{i}}\Bigg[\sum_{x \in X_{n}} \mathds{1}_{\{L^{x}(n) \in S\}} \Bigg] = \widehat{\mathbb{E}}_{\boldsymbol{e}_{i}}^{*} \big[ u_{i}u_{\sigma(\xi_{n})}^{-1} \mathds{1}_{\{L^{\xi_{n}}(n) \in S\}}     \big]   \leq \max_{j} \frac{u_{i}}{u_{j}} \widehat{\mathbb{P}}_{\boldsymbol{e}_{i}}^{*} ( L^{\xi_{n}}(n) \in S ),   \] and so \[  \mathbb{P}_{\boldsymbol{e}_{i}}\Big(  \sum_{x \in X_{n}} \mathds{1}_{\{L^{x}(n) \in S\}} \geq 1 \ \Big{\vert} \ \Omega_{\mathrm{surv}}(n) \Big) \leq \frac{1}{\mathbb{P}_{\boldsymbol{e}_{i}}(\Omega_{\mathrm{surv}}(n))} \max_{j} \frac{u_{i}}{u_{j}} \widehat{\mathbb{P}}_{\boldsymbol{e}_{i}}^{*} ( L^{\xi_{n}}(n) \in S ).   \]  
					
					Taking logarithms on both sides of the inequality and dividing by $n$, we find \begin{align*}
						\frac{1}{n} \log  \Big[\mathbb{P}_{\boldsymbol{e}_{i}} & \Big(  \sum_{x \in X_{n}} \mathds{1}_{\{L^{x}(n) \in S\}}  \geq 1 \ \Big{\vert} \ \Omega_{\mathrm{surv}}(n) \Big)\Big] \\
						& \leq \frac{1}{n} \log \Big[ \frac{1}{\mathbb{P}_{\boldsymbol{e}_{i}}(\Omega_{\mathrm{surv}}(n))}  \Big] + \frac{1}{n} \log \Big[ \max_{j} \frac{u_{i}}{u_{j}} \Big]
						+ \frac{1}{n} \log \Big[ \widehat{\mathbb{P}}_{\boldsymbol{e}_{i}}^{*} ( L^{\xi_{n}}(n) \in S )  \Big].
					\end{align*} By \eqref{eqlimitOmegasurvn}, the first summand goes to $0$ for $n \to \infty$, as does the second. For the third, the Donsker--Varadhan large deviation principle yields \[ \limsup_{n \to \infty} \frac{1}{n} \log \big(\widehat{\mathbb{P}}_{\boldsymbol{e}_{i}}^{*}( L^{\xi_{n}}(n) \in S) \big) \leq - \inf_{\boldsymbol{\nu} \in S}J_{\mathbf{P}}(\boldsymbol{\nu}), \] and the statement follows. \qedhere
				\end{proof}
				

				\section{Typical ancestral type evolution}
				
				To set up our last result, we let $\Sigma = A(\mathbb{Z}, [d])$ denote the space of all doubly infinite sequences taking values in $[d]$. The time shift $\vartheta_{m}$ on $\Sigma$ is defined by \[ \vartheta_{m}s(n) = s(n+m), \quad n,m \in \mathbb{Z}, s \in \Sigma.  \] We write $\mathcal{P}_{\Theta}(\Sigma)$ for the set of all probability measures on $\Sigma$ that are invariant under the shift group $\Theta = (\vartheta_{m})_{m \in \mathbb{Z}}$ equipped with the weak topology.\\
				
				Next we introduce the time-averaged type-evolution process of an individual in the population tree. For $n \in \mathbb{N}$ and $x \in X_{n}$ we set \[ \sigma(x)_{n, \text{per}}(m) \coloneq \sigma(x(m_{n})), \quad m \in \mathbb{Z},  \] where $m_{n}$ is the unique number in $[n-1]_{0}$ with $m \equiv m_{n} \mod n$. That is, $\sigma(x)_{n, \text{per}}$ is the periodically continued type history of $x$ up to time $n$. The time-averaged type evolution of $x$ is then described by the \emph{empirical type evolution process} \begin{equation} R^{x}(n)= \frac{1}{n} \sum_{k=0}^{n-1} \delta_{\vartheta_{k}\sigma(x)_{n, \text{per}}} \in \mathcal{P}_{\Theta}(\Sigma). \label{eqdefR}
				\end{equation}
				
				We are interested in the typical behaviour of $R^{x}(n)$ when $x$ is picked at random from $X_{n}$, the population at time $n$.

				\begin{theorem}\label{theolargedevtypicalancestral}
					Let $\boldsymbol{\mu}  \in \mathcal{P}_{\Theta}(\Sigma)$ denote the distribution of the stationary (doubly infinite) retrospective mutation chain $(\sigma(n))_{n \in \mathbb{Z}}$ with transition matrix $\mathbf{P}$ as in Definition~\ref{defretrospectivemutationchain} and invariant distribution $\boldsymbol{\alpha}$. Then, for every closed $F \subset \mathcal{P}_{\Theta}(\Sigma)$ with $\boldsymbol{\mu} \notin F$,
					\[\limsup_{n \to \infty} \frac{1}{n} \log \Big[\mathbb{P}_{\boldsymbol{e}_{i}}\Big( \sum_{x \in X_{n}} \mathds{1}_{\{R^{x}(n) \in F\}} > 0 \ \Big{\vert} \ \Omega_{\mathrm{surv}}(n) \Big)\Big] \leq  - \inf_{\boldsymbol{\nu} \in F} H_{\mathbf{P}}(\boldsymbol{\nu})  \] with the rate function \[ H_{\mathbf{P}}(\boldsymbol{\nu}) = \sup_{n \in \mathbb{N}} \frac{H(\boldsymbol{\nu}|_{[n-1]_{0}}; \boldsymbol{\mu}|_{[n-1]_{0}})}{n},   \] where $\boldsymbol{\nu}|_{[n-1]_{0}}$ and $\boldsymbol{\mu}|_{[n-1]_{0}}$ denote the restrictions of $\boldsymbol{\nu}$ and $\boldsymbol{\mu}$ to times $[n-1]_{0}$ and $H(\boldsymbol{\nu}|_{[n-1]_{0}}; \boldsymbol{\mu}|_{[n-1]_{0}})$ is their relative entropy in the sense of \cite[Lem.~3.2.13]{deuschel2001large}.
				\end{theorem}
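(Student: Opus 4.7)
The strategy is to follow the blueprint of the proof of Theorem~\ref{theolargedevtimeaverage}, but to replace the level-2 (empirical occupation measure) Donsker--Varadhan principle by its level-3 (empirical process) counterpart for Markov chains. All the machinery we need is already in place: the many-to-one formula of Proposition~\ref{HOGtheo4.1}, the identification of the trunk type process with the retrospective mutation chain (Definition~\ref{defretrospectivemutationchain} and Remark~\ref{remstationarydistribretrospectivemutationchain}), and the asymptotics $n\mathbb{P}_{\boldsymbol{e}_i}(\Omega_{\mathrm{surv}}(n)) \to u_i/Q[\boldsymbol{u}]$ from \eqref{eqlimitOmegasurvn}.

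First I would apply Markov's inequality exactly as in Theorem~\ref{theolargedevtimeaverage}, obtaining
\[
\mathbb{P}_{\boldsymbol{e}_i}\Big( \textstyle\sum_{x \in X_n} \mathds{1}_{\{R^x(n) \in F\}} > 0 \ \Big{\vert} \ \Omega_{\mathrm{surv}}(n)\Big) \ \leq\ \frac{1}{\mathbb{P}_{\boldsymbol{e}_i}(\Omega_{\mathrm{surv}}(n))}\ \mathbb{E}_{\boldsymbol{e}_i}\Big[\textstyle\sum_{x \in X_n} \mathds{1}_{\{R^x(n) \in F\}}\Big].
\]
Since $R^x(n)$ is a measurable functional of the type history $(\sigma(x(k)))_{k \in [n-1]_0}$, and hence of the path $x[0,n]$, Proposition~\ref{HOGtheo4.1} applied to $F(X[0,n],x[0,n]) = u_i u_{\sigma(x)}^{-1} \mathds{1}_{\{R^x(n)\in F\}}$ yields
\[
\mathbb{E}_{\boldsymbol{e}_i}\Big[\textstyle\sum_{x \in X_n} \mathds{1}_{\{R^x(n) \in F\}}\Big] \ =\ u_i\, \widehat{\mathbb{E}}^*_{\boldsymbol{e}_i}\bigl[u_{\sigma(\xi_n)}^{-1} \mathds{1}_{\{R^{\xi_n}(n) \in F\}}\bigr] \ \leq\ \max_j \frac{u_i}{u_j}\, \widehat{\mathbb{P}}^*_{\boldsymbol{e}_i}(R^{\xi_n}(n) \in F).
\]
Taking logarithms and dividing by $n$, the prefactor $1/\mathbb{P}_{\boldsymbol{e}_i}(\Omega_{\mathrm{surv}}(n))$ contributes a vanishing $(\log n)/n + \mathcal{O}(1/n)$ by \eqref{eqlimitOmegasurvn}, and the constant $\max_j u_i/u_j$ contributes $\mathcal{O}(1/n)$.

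It then remains to bound $n^{-1} \log \widehat{\mathbb{P}}^*_{\boldsymbol{e}_i}(R^{\xi_n}(n) \in F)$. Under $\widehat{\mathbb{P}}^*_{\boldsymbol{e}_i}$, the trunk type process $(\sigma(\xi_n))_{n \geq 0}$ is precisely the retrospective mutation chain with transition matrix $\mathbf{P}$, and this chain is irreducible because $p_{i,j} = m_{i,j} u_j/u_i$ inherits the irreducibility of $\mathbf{M}$ (together with $\boldsymbol{u}>0$); its unique invariant law is $\boldsymbol{\alpha}$, so the doubly-infinite stationary chain has law $\boldsymbol{\mu}$. The process $R^{\xi_n}(n)$ is the standard empirical process (in its periodised version) of this chain, and the classical level-3 Donsker--Varadhan large deviation principle for irreducible Markov chains (see \cite[Thm.~IV.3]{hollander2000large} or \cite[\S4.4]{deuschel2001large}) gives
\[
\limsup_{n \to \infty} \frac{1}{n} \log \widehat{\mathbb{P}}^*_{\boldsymbol{e}_i}(R^{\xi_n}(n) \in F) \ \leq\ -\inf_{\boldsymbol{\nu} \in F} H_{\mathbf{P}}(\boldsymbol{\nu}),
\]
with the rate function $H_{\mathbf{P}}$ given by the specific relative entropy stated in the theorem. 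Combining the three pieces yields the claim.

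The main obstacle is technical rather than conceptual: one must check that the periodic continuation in the definition \eqref{eqdefR} does not affect the LDP upper bound, i.e.\ that $R^{\xi_n}(n)$ and the non-periodised empirical process are exponentially equivalent. This is standard (the two empirical processes differ only in a boundary zone of size $o(n)$ in the topology of finite-dimensional marginals, which carries the weak topology on $\mathcal{P}_{\Theta}(\Sigma)$), but it is the only step where some care is needed. After that is in place, the proof follows the template of Theorem~\ref{theolargedevtimeaverage} line by line, with the level-3 LDP replacing the level-2 one.
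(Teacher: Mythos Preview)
Your proposal is correct and follows essentially the same route as the paper: the paper's proof simply invokes the process-level Donsker--Varadhan large deviation principle (citing \cite[Thm.~4.4.12]{deuschel2001large} and \cite[Thm.~5.3]{varadhan1988large}) and says to repeat the argument of Theorem~\ref{theolargedevtimeaverage} verbatim with $R^{x}(n)$ and $\boldsymbol{\mu}$ in place of $L^{x}(n)$ and $\boldsymbol{\alpha}$. Your write-up is in fact more detailed than the paper's, and your remark about exponential equivalence of the periodised and non-periodised empirical processes is a reasonable caveat that the paper leaves implicit.
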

				
				\begin{proof}
					We apply the process level version of the Donsker-Varadhan large deviation principle (see e.g. \cite[Thm.~4.4.12]{deuschel2001large} or \cite[Thm.~5.3]{varadhan1988large}). The statement then follows from repeating the exact argument from the proof of Theorem~\ref{theolargedevtimeaverage}, replacing $L^{x}(n)$ and $\boldsymbol{\alpha}$ by $R^{x}(n)$ and $\boldsymbol{\mu}$.
				\end{proof}

				\begin{remark}\label{theotypicalancestraltypeevolution}
				In analogy with Remark \ref{theotimeaverageancestraltypes}, Theorem \ref{theolargedevtypicalancestral} implies a weaker analogue of \cite[Thm.~5.1]{georgii2003supercritical}, which says that, for all individuals $x \in X_{n}$ up to an asymptotically negligible fraction, the time-averaged ancestral type evolution process $R^{x}(n)$ is close to $\boldsymbol{\mu}$ in the weak topology: for each closed $F \subset \mathcal{P}_{\Theta}(\Sigma)$ with $\boldsymbol{\mu} \notin F$ and every $\varepsilon>0$, we have \[ \lim_{n \to \infty} \mathbb{P}_{\boldsymbol{e}_{i}}\bigg(  \frac{1}{\langle \boldsymbol{1}, \boldsymbol{Z}(n)\rangle} \sum_{x \in X_{n}} \mathds{1}_{\{R^{x}(n) \in F\}}  > \varepsilon \ \bigg{\vert} \ \Omega_{\mathrm{surv}}(n) \bigg) =0.   \] 
				\end{remark}
			\section{Acknowledgements}	
			Funded by the Deutsche Forschungsgemeinschaft (DFG, German Research Foundation) - Project-ID 317210226 - SFB 1283.
				
				\bibliographystyle{abbrvnat}

			\end{document}